\newcommand{\eqnum}{\refstepcounter{equation}\textup{\tagform@{\theequation}}}
\makeatletter \@addtoreset{equation}{section} \makeatother
\renewcommand{\theequation}{\thesection.\arabic{equation}}
\newtheorem{thm}[equation]{Theorem}
\newtheorem*{thm*}{Theorem}
\newtheorem{lem}[equation]{Lemma}
\newtheorem{cor}[equation]{Corollary}
\newtheorem{prop}[equation]{Proposition}
\newtheorem*{defthm*}{Definition/Theorem}
\theoremstyle{definition}
\newtheorem{defn}[equation]{Definition}
\newtheorem{rem}[equation]{Remark}
\newtheorem{exam}[equation]{Example}
\newtheorem{constr}[equation]{Construction}
\newtheorem{warn}[equation]{Warning}
\newtheorem*{exam*}{Example}
\newcommand\arXiv[1]{\href{http://arxiv.org/abs/#1}{arXiv:#1}}
\newcommand{\changelocaltocdepth}[1]{%
  \addtocontents{toc}{\protect\setcounter{tocdepth}{#1}}%
  \setcounter{tocdepth}{#1}}
\newcommand{\nc}{\newcommand}
\nc{\renc}{\renewcommand}
\nc{\ssec}{\subsection}
\nc{\sssec}{\subsubsection}
\nc{\on}{\operatorname}
\nc{\term}[1]{#1\xspace}
\tikzset{
  commutative diagrams/.cd,
  arrow style=tikz,
  diagrams={>=latex}}
\tikzset{
  column sep/.code=\def\pgfmatrixcolumnsep{\pgf@matrix@xscale*(#1)},
  row sep/.code   =\def\pgfmatrixrowsep{\pgf@matrix@yscale*(#1)},
  matrix xscale/.code=%
    \pgfmathsetmacro\pgf@matrix@xscale{\pgf@matrix@xscale*(#1)},
  matrix yscale/.code=%
    \pgfmathsetmacro\pgf@matrix@yscale{\pgf@matrix@yscale*(#1)},
  matrix scale/.style={/tikz/matrix xscale={#1},/tikz/matrix yscale={#1}}}
\def\pgf@matrix@xscale{1}
\def\pgf@matrix@yscale{1}
\setlist[enumerate,1]{label={(\alph*)},itemsep=\parskip}
\newlist{thmlist}{enumerate}{1}
\setlist[thmlist,1]{
  label={\em(\roman*)}, ref={(\roman*)},
  itemsep=0.5em,
  % leftmargin=0pt,
  align=right,widest=vi)}
\newlist{thmlistbis}{enumerate}{1}
\setlist[thmlistbis,1]{
  label={\em(\roman*~\textit{bis})},
  ref={(\roman*}~\textit{bis}\upshape{)},
  itemsep=0.5em,
  leftmargin=0pt, align=right, widest=vi)}
\newlist{defnlist}{enumerate}{2}
\setlist[defnlist,1]{
  label={(\roman*)}, ref={(\roman*)},
  itemsep=0.5em,
  % leftmargin=0pt,
  align=right, widest=vi)}
\setlist[defnlist,2]{
  label={(\alph*)}, ref={(\alph*)},
  itemsep=0.75em,
  labelsep=0em,labelindent=0em,leftmargin=*,align=left,widest=vi),
  topsep=0.75em}
\newlist{inlinelist}{enumerate*}{1}
\setlist[inlinelist,1]{label={(\alph*)}}
\newlist{inlinedefnlist}{enumerate*}{1}
\definecolor{green}{HTML}{38550C}
\setlist[inlinedefnlist,1]{label={\color{green}(\roman*)}}
\nc{\sA}{\ensuremath{\mathcal{A}}\xspace}
\nc{\sB}{\ensuremath{\mathcal{B}}\xspace}
\nc{\sC}{\ensuremath{\mathcal{C}}\xspace}
\nc{\sD}{\ensuremath{\mathcal{D}}\xspace}
\nc{\sE}{\ensuremath{\mathcal{E}}\xspace}
\nc{\sF}{\ensuremath{\mathcal{F}}\xspace}
\nc{\sG}{\ensuremath{\mathcal{G}}\xspace}
\nc{\sH}{\ensuremath{\mathcal{H}}\xspace}
\nc{\sI}{\ensuremath{\mathcal{I}}\xspace}
\nc{\sJ}{\ensuremath{\mathcal{J}}\xspace}
\nc{\sK}{\ensuremath{\mathcal{K}}\xspace}
\nc{\sL}{\ensuremath{\mathcal{L}}\xspace}
\nc{\sM}{\ensuremath{\mathcal{M}}\xspace}
\nc{\sN}{\ensuremath{\mathcal{N}}\xspace}
\nc{\sO}{\ensuremath{\mathcal{O}}\xspace}
\nc{\sP}{\ensuremath{\mathcal{P}}\xspace}
\nc{\sQ}{\ensuremath{\mathcal{Q}}\xspace}
\nc{\sR}{\ensuremath{\mathcal{R}}\xspace}
\nc{\sS}{\ensuremath{\mathcal{S}}\xspace}
\nc{\sT}{\ensuremath{\mathcal{T}}\xspace}
\nc{\sU}{\ensuremath{\mathcal{U}}\xspace}
\nc{\sV}{\ensuremath{\mathcal{V}}\xspace}
\nc{\sW}{\ensuremath{\mathcal{W}}\xspace}
\nc{\sX}{\ensuremath{\mathcal{X}}\xspace}
\nc{\sY}{\ensuremath{\mathcal{Y}}\xspace}
\nc{\sZ}{\ensuremath{\mathcal{Z}}\xspace}
\nc{\bA}{\ensuremath{\mathbf{A}}\xspace}
\nc{\bB}{\ensuremath{\mathbf{B}}\xspace}
\nc{\bC}{\ensuremath{\mathbf{C}}\xspace}
\nc{\bD}{\ensuremath{\mathbf{D}}\xspace}
\nc{\bE}{\ensuremath{\mathbf{E}}\xspace}
\nc{\bF}{\ensuremath{\mathbf{F}}\xspace}
\nc{\bG}{\ensuremath{\mathbf{G}}\xspace}
\nc{\bH}{\ensuremath{\mathbf{H}}\xspace}
\nc{\bI}{\ensuremath{\mathbf{I}}\xspace}
\nc{\bJ}{\ensuremath{\mathbf{J}}\xspace}
\nc{\bK}{\ensuremath{\mathbf{K}}\xspace}
\nc{\bL}{\ensuremath{\mathbf{L}}\xspace}
\nc{\bM}{\ensuremath{\mathbf{M}}\xspace}
\nc{\bN}{\ensuremath{\mathbf{N}}\xspace}
\nc{\bO}{\ensuremath{\mathbf{O}}\xspace}
\nc{\bP}{\ensuremath{\mathbf{P}}\xspace}
\nc{\bQ}{\ensuremath{\mathbf{Q}}\xspace}
\nc{\bR}{\ensuremath{\mathbf{R}}\xspace}
\nc{\bS}{\ensuremath{\mathbf{S}}\xspace}
\nc{\bT}{\ensuremath{\mathbf{T}}\xspace}
\nc{\bU}{\ensuremath{\mathbf{U}}\xspace}
\nc{\bV}{\ensuremath{\mathbf{V}}\xspace}
\nc{\bW}{\ensuremath{\mathbf{W}}\xspace}
\nc{\bX}{\ensuremath{\mathbf{X}}\xspace}
\nc{\bY}{\ensuremath{\mathbf{Y}}\xspace}
\nc{\bZ}{\ensuremath{\mathbf{Z}}\xspace}
\nc{\bbA}{\ensuremath{\mathbb{A}}\xspace}
\nc{\bbB}{\ensuremath{\mathbb{B}}\xspace}
\nc{\bbC}{\ensuremath{\mathbb{C}}\xspace}
\nc{\bbD}{\ensuremath{\mathbb{D}}\xspace}
\nc{\bbE}{\ensuremath{\mathbb{E}}\xspace}
\nc{\bbF}{\ensuremath{\mathbb{F}}\xspace}
\nc{\bbG}{\ensuremath{\mathbb{G}}\xspace}
\nc{\bbH}{\ensuremath{\mathbb{H}}\xspace}
\nc{\bbI}{\ensuremath{\mathbb{I}}\xspace}
\nc{\bbJ}{\ensuremath{\mathbb{J}}\xspace}
\nc{\bbK}{\ensuremath{\mathbb{K}}\xspace}
\nc{\bbL}{\ensuremath{\mathbb{L}}\xspace}
\nc{\bbM}{\ensuremath{\mathbb{M}}\xspace}
\nc{\bbN}{\ensuremath{\mathbb{N}}\xspace}
\nc{\bbO}{\ensuremath{\mathbb{O}}\xspace}
\nc{\bbP}{\ensuremath{\mathbb{P}}\xspace}
\nc{\bbQ}{\ensuremath{\mathbb{Q}}\xspace}
\nc{\bbR}{\ensuremath{\mathbb{R}}\xspace}
\nc{\bbS}{\ensuremath{\mathbb{S}}\xspace}
\nc{\bbT}{\ensuremath{\mathbb{T}}\xspace}
\nc{\bbU}{\ensuremath{\mathbb{U}}\xspace}
\nc{\bbV}{\ensuremath{\mathbb{V}}\xspace}
\nc{\bbW}{\ensuremath{\mathbb{W}}\xspace}
\nc{\bbX}{\ensuremath{\mathbb{X}}\xspace}
\nc{\bbY}{\ensuremath{\mathbb{Y}}\xspace}
\nc{\bbZ}{\ensuremath{\mathbb{Z}}\xspace}
\nc{\mrm}[1]{\ensuremath{\mathrm{#1}}\xspace}
\nc{\mit}[1]{\ensuremath{\mathit{#1}}\xspace}
\nc{\mbf}[1]{\ensuremath{\mathbf{#1}}\xspace}
\nc{\mcal}[1]{\ensuremath{\mathcal{#1}}\xspace}
\nc{\msc}[1]{\ensuremath{\mathscr{#1}}\xspace}
\nc{\sub}{\subseteq}
\nc{\too}{\longrightarrow}
\nc{\hook}{\hookrightarrow}
\nc{\hooklongrightarrow}{\lhook\joinrel\longrightarrow}
\nc{\hooklong}{\hooklongrightarrow}
\nc{\hooklongleftarrow}{\longleftarrow\joinrel\rhook}
\nc{\twoheadlongrightarrow}{\relbar\joinrel\twoheadrightarrow}
\nc{\longrightleftarrows}{\ \raisebox{0.3ex}{\(\mathrel{\substack{\xrightarrow{\rule{1em}{0em}} \\[-1ex] \xleftarrow{\rule{1em}{0em}}}}\)}\ }
\renc{\ge}{\geqslant}
\renc{\le}{\leqslant}
\nc{\id}{\mathrm{id}}
\DeclareMathOperator{\Hom}{\on{Hom}}
\nc{\uHom}{\underline{\smash{\Hom}}}
\DeclareMathOperator{\Maps}{\on{Maps}}
\DeclareMathOperator{\End}{\on{End}}
\nc{\uEnd}{\underline{\smash{\End}}}
\nc{\colim}{\varinjlim}
\renc{\lim}{\varprojlim}
\nc{\Cofib}{\on{Cofib}}
\nc{\Fib}{\on{Fib}}
\nc{\initial}{\varnothing}
\nc{\op}{\mathrm{op}}
\DeclareMathOperator*{\fibprod}{\times}
\renc{\setminus}{\smallsetminus}
\DeclarePairedDelimiter\abs{\lvert}{\rvert}%
\newcommand{\thmref}[1]{Theorem~\ref{#1}}
\newcommand{\ssecref}[1]{Subsect. ~\ref{#1}}
\newcommand{\lemref}[1]{Lemma~\ref{#1}}
\newcommand{\propref}[1]{Proposition~\ref{#1}}
\newcommand{\corref}[1]{Corollary~\ref{#1}}
\newcommand{\remref}[1]{Remark~\ref{#1}}
\newcommand{\defnref}[1]{Definition~\ref{#1}}
\renewcommand{\eqref}[1]{(\ref{#1})}
\newcommand{\constrref}[1]{Construction~\ref{#1}}
\newcommand{\examref}[1]{Example~\ref{#1}}
\newcommand{\warnref}[1]{Warning~\ref{#1}}
\newcommand{\itemref}[1]{\ref{#1}}
\nc{\A}{\bA}
\renc{\P}{\bP}
\nc{\Spec}{\on{Spec}}
\nc{\D}{\on{\mbf{D}}}
\nc{\Dqc}{\on{\mbf{D}}_{\mrm{qc}}}
\nc{\bDelta}{\mathbf{\Delta}}
\nc{\Cech}{\textnormal{\v{C}}}
\nc{\Dperf}{\on{\mbf{D}}_{\mrm{perf}}}
\nc{\Coh}{\on{Coh}}
\nc{\Qcoh}{\on{Qcoh}}
\nc{\Dcoh}{\on{\mbf{D}}_{\mrm{coh}}}
\nc{\cl}{{\mrm{cl}}}
\nc{\Bl}{\on{Bl}}
\nc{\vir}{\mrm{vir}}
\nc{\CH}{\on{A}}
\nc{\et}{\mrm{\acute{e}t}}
\renc{\H}{\on{H}}
\nc{\BM}{\mrm{BM}}
\nc{\Z}{\bZ}
\nc{\Q}{\bQ}
\nc{\K}{{\on{K}}}
\nc{\KB}{\K^{\mrm{B}}}
\nc{\G}{{\on{G}}}
\nc{\KH}{\mrm{KH}}
\nc{\Ket}{\K^{\et}}
\nc{\KHet}{\KH^{\et}}
\nc{\Get}{\G^{\et}}
\nc{\Einfty}{{\sE_\infty}}
\renc{\sp}{\mrm{sp}}
\nc{\Td}{\on{Td}}
\nc{\ch}{\on{ch}}
\nc{\RGamma}{\bR\Gamma}
\nc{\red}{\mrm{red}}
\nc{\der}{{\mrm{der}}}
\nc{\Mod}{{\mrm{Mod}}}
\nc{\Gr}{{\on{Gr}}}
\nc{\Ind}{\on{Ind}}
\nc{\form}{\widehat}
\nc{\R}{\bR}
\renc{\L}{\bL}
\nc{\otimesL}{\mathchoice{\overset{\bL}{\otimes}}{\otimes^\bL}{\otimes^\bL}{\otimes^\bL}}
\nc{\fibprodR}{\fibprod^\bR}
\nc{\uRHom}{\bR\uHom}
\nc{\GL}{\mrm{GL}}
\nc{\scr}{\term{derived commutative ring}}
\nc{\scrs}{\term{derived commutative rings}}
\nc{\inftyCat}{\term{$\infty$-category}}
\nc{\inftyCats}{\term{$\infty$-categories}}
\nc{\inftyGrpd}{\term{$\infty$-groupoid}}
\nc{\inftyGrpds}{\term{$\infty$-groupoids}}
\nc{\dA}{\term{derived Artin}}
\title{K-theory and G-theory of derived algebraic~stacks\vspace{-2mm}}
\author{Adeel~A.~Khan\vspace{-1mm}}
\date{2021-10-03}
\def\l@subsection{\@tocline{2}{0pt}{4pc}{6pc}{}}
\begin{document}

\begin{abstract}
  These are some notes on the basic properties of algebraic K-theory and G-theory of derived algebraic spaces and stacks, and the theory of fundamental classes in this setting.
  \vspace{-5mm}
\end{abstract}

\maketitle

\renewcommand\contentsname{\vspace{-1cm}}
\tableofcontents

\setlength{\parindent}{0em}
\parskip 0.75em

\thispagestyle{empty}

%%%%%%%%%%%%%%%%%%%%%%%%%%%%%%%%%%%%%%%%%%%%%%%%%%%%%%%%%%%%%%%%%%%%%%%%%%%

\changelocaltocdepth{1}

\section*{Introduction}

  In \cite{KhanVirtual} I studied the cohomology and Borel--Moore homology of derived schemes and algebraic spaces, as well as Borel-type extensions to derived algebraic stacks.
  In these notes I describe the K-theoretic counterpart to that formalism.
  In this analogy, the K-theory of perfect complexes behaves like cohomology while G-theory (K-theory of coherent sheaves) behaves like Borel--Moore homology.
  This relationship can actually be made quite precise using the formalism of motivic categories: the theory of \emph{KGL-modules} provides a category of coefficients for K-theory\footnote{or rather its homotopy invariant version KH (see \ssecref{ssec:KH})}, and Borel--Moore homology in that setting (i.e., cohomology with coefficients in the dualizing complex) is G-theory.
  See \cite[\S 13.3]{CisinskiDegliseBook} and \cite{JinG}.

  The notes are almost entirely expository and were originally prepared as background material for the paper \cite{KhanVirtual}.
  I begin in Section~\ref{sec:qcoh} with some preliminaries on quasi-coherent complexes on derived stacks and various finiteness conditions.
  Thanks primarily to work of Hall and Rydh over the past several years, we can work with very general stacks.
  I also review some new compact generation results for stacks, obtained jointly with Ravi.

  In the \hyperref[sec:K]{second} and \hyperref[sec:G]{third} sections I define K-theory and G-theory and record their basic properties.
  Most of these go back to \cite{SGA6}, \cite{QuillenK}, and \cite{ThomasonTrobaugh} in the case of classical schemes, and to \cite{ToenGRR,ToenNotes,KrishnaRavi,HoyoisKrishna} in the case of classical stacks.

  The \hyperref[sec:KH]{fourth} section discusses the failure of homotopy invariance in K-theory of singular spaces and some of its ramifications.
  I explain how \emph{forcing} homotopy invariance for arbitrary spaces results in a new cohomology theory, first introduced by Weibel \cite{WeibelKH} for classical schemes, that agrees with K-theory on nonsingular spaces but behaves more ``correctly'' on singular ones.

  In the \hyperref[sec:Get]{fifth} section I explain how K-theory and G-theory acquire some useful descent properties after passage to rational coefficients.
  This leads to étale-local variants of K-theory and G-theory of stacks, which do not agree however with K-theory and G-theory even rationally.
  A subtlety here is the existence and behaviour of direct images in étale G-theory, which are not compatible with those in G-theory.
  Note that, for stacks, these étale-localized theories are the ones that are actually compatible with the étale motivic theories studied in \cite{KhanVirtual}.

  The most important aspect of the formalism in \cite{KhanVirtual} is the theory of fundamental classes.
  The K-theoretic counterpart to that part of the story, at least some of which is known to the experts, is developed in Section~\ref{sec:fund}.
  The K-theoretic virtual structure sheaf of \cite{Lee} and its basic properties fall out of this formalism for free.
  I also give another description of Gysin maps in G-theory using deformation to the normal stack.
  Then I recall some variants of the Grothendieck--Riemann--Roch theorem.
  One formulation, proven in \cite{KhanVirtual}, compares the virtual structure sheaf with the virtual fundamental class in Borel--Moore homology (or the Chow groups).
  Another is a direct generalization of the one proven in \cite{SGA6} and involves the behaviour of K-theoretic fundamental classes with respect to the $\gamma$-filtration.
  This inspires some unsolicited speculations on a theory of derived algebraic cycles.

  \subsection*{Conventions}

    I generally tried to follow the same conventions and notation as in \cite{KhanVirtual}.
    Since a substantial revision to that paper is in preparation, I was not always able to achieve this.

  \subsection*{Acknowledgments}

    Thanks to Mauro Porta and Tony Yue Yu for many questions and suggestions about \cite{KhanVirtual}, which eventually led to these notes being written.
    Thanks to Marc Hoyois, Charanya Ravi, and David Rydh for helpful conversations about stacks over the years.
    A second thanks to Charanya for pointing out many typos in a previous draft.

\changelocaltocdepth{2}

\section{Perfect and coherent complexes}
\label{sec:qcoh}

  \subsection{Complexes over derived commutative rings}

    Let $A$ be a derived commutative ring.
    We write $\D(A)$ for the derived \inftyCat of complexes over $A$, defined e.g. as in \cite[\S 25.2.1]{LurieSAG}, \cite[App.~A]{HalpernLeistnerPreygel}, or \cite[\S 2.4]{ToenVaquie}.

    We will say a complex $M \in \D(A)$ is \emph{connective}, resp. \emph{coconnective}, if we have $\pi_i(M) = 0$ for all $i < 0$, resp. for all $i > 0$.
    More generally we say $M$ is \emph{$n$-connective}, resp. \emph{$n$-coconnective}, if we have $\pi_i(M) = 0$ for all $i < n$, resp. for all $i > n$.
    We write
    \[
      \D(A)_{\ge n} \sub \D(A),
      \qquad \D(A)_{\le n} \sub \D(A),
    \]
    for the respective full subcategories.
    If $M$ is both connective and coconnective, i.e., if $\pi_i(M) = 0$ for all $i\ne 0$, then we say $M$ is \emph{discrete}.
    The full subcategory of discrete complexes
    \[ \D(A)^\heartsuit \sub \D(A)\]
    is equivalent to the abelian category $\Mod_{\pi_0(A)}$ of $\pi_0(A)$-modules.
    What we have just described is of course nothing else than the standard t-structure on $\D(A)$.

    The cohomologically oriented reader will want to write
    \[ \on{H}^{-i}(M) := \pi_i(M) \]
    and reset the notation as follows:
    \[
      \D(A)^{\le n} := \D(A)_{\ge -n}, 
      \qquad \D(A)^{\ge n} := \D(A)_{\le -n}.
    \]

    We recall some finiteness conditions on complexes.

    \begin{defn}
      A complex $M$ over $A$ is \emph{perfect} if it is in the thick subcategory of $\D(A)$ generated by $A$.
      That is, if it is built out of $A$ under finite (co)limits and direct summands.
    \end{defn}

    To define (pseudo)coherent complexes, it is convenient (but not necessary) to assume that $A$ is noetherian.
    Here this means that the ordinary commutative ring $\pi_0(A)$ is noetherian and that the homotopy groups $\pi_i(A)$ are finitely generated as $\pi_0(A)$-modules for all $i\ge 0$.
    Whenever we discuss (pseudo)coherent complexes below, the reader should either assume the ring is noetherian\footnote{
      Noetherianness can be replaced harmlessly by the slightly weaker property of coherence in the sense of \cite[Def.~7.2.4.13]{LurieHA}.
    } or should replace the definition of pseudocoherence below by \cite[Def.~7.2.4.10]{LurieHA}.

    \begin{defn}
      \begin{inlinedefnlist}
        \item
        A complex $M$ over $A$ is \emph{pseudocoherent}\footnote{
          In \cite{LurieHA,LurieSAG,HalpernLeistnerPreygel}, the term ``almost perfect'' is used instead of pseudocoherent.
        } if it is eventually connective, i.e., $\pi_i(M) = 0$ for $i \ll 0$, and $\pi_i(M)$ is finitely generated as a $\pi_0(A)$-module for all $i$.

        \item
        A complex $M$ over $A$ is \emph{coherent} if it is pseudocoherent and also eventually coconnective, i.e., $\pi_i(M) = 0$ for $i \gg 0$.
      \end{inlinedefnlist}
    \end{defn}

    \begin{defn}\label{defn:Tor}
      \begin{inlinedefnlist}
        \item
        A complex $M$ over $A$ is \emph{of Tor-amplitude $\le n$} if for every discrete complex $N \in \D(A)^\heartsuit$, the derived tensor product $M \otimes^\bL_{A} N$ is $n$-coconnective.
        Equivalently, for every coconnective complex $N \in \D(A)_{\le 0}$, the derived tensor product $M \otimes^\bL_{A} N$ is $n$-coconnective (see \cite[Prop.~7.2.4.23 (5)]{LurieHA}).
        
        \item
        A complex $M$ over $A$ is \emph{of finite Tor-amplitude} if it is of Tor-amplitude $\le n$ for some $n$.
      \end{inlinedefnlist}
    \end{defn}

    The following lemma summarizes the relationships between these finiteness conditions.

    \begin{lem}\label{lem:perfect is pseudocoherent}\leavevmode
      \begin{thmlist}
        \item
        The property of (pseudo)coherence is stable under finite (co)limits and direct summands.
        In other words, the (pseudo)coherent complexes form a thick subcategory of $\D(A)$.

        \item
        Every perfect complex over $A$ is pseudocoherent.

        \item\label{item:perfect is pseudocoherent/fTa}
        A pseudocoherent complex over $A$ is perfect if and only if it is of finite Tor-amplitude.

        \item
        If $A$ is eventually coconnective, then every perfect complex over $A$ is coherent.

        \item
        Assume $A$ is eventually coconnective and that every coherent complex is perfect.
        Then $A$ is discrete, and regular as an ordinary commutative ring.
      \end{thmlist}
    \end{lem}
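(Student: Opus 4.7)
The claim has two parts: $A$ is discrete, and then $\pi_0(A)$ is regular as an ordinary ring. I would establish discreteness first, and deduce regularity afterwards. The $A$-module $\pi_0(A)$ is itself coherent: it is pseudocoherent (finitely generated over $\pi_0(A)$ and lives in a single degree) and bounded (discrete). By hypothesis and \itemref{item:perfect is pseudocoherent/fTa} it has finite Tor-amplitude over $A$; the same therefore holds for any coherent $A$-module.

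To conclude $A = \pi_0(A)$, I would argue by contradiction: suppose $\pi_n(A) \ne 0$ for some $n \ge 1$. Choose a maximal ideal $\mathfrak{m} \subset \pi_0(A)$ in the support of $\pi_n(A)$ and set $k = \pi_0(A)/\mathfrak{m}$. Then $k$ is coherent over $A$, hence perfect by hypothesis, so $B := k \otimesL_A k$ has bounded homotopy. This is contradicted by a direct derived Tor computation. In the model case $A = k \oplus k[n]$, the fiber sequence $k[n] \to A \to k$ of $A$-modules base-changed along $A \to k$ yields a fiber sequence $B[n] \to k \to B$; its long exact sequence forces $\pi_{j(n+1)}(B) \cong k$ for every $j \ge 0$, so $B$ has unbounded homotopy. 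The general case reduces to this toy one via the Postnikov tower of $A$: each stage is a square-zero extension (of $\tau_{\le m-1}A$ by $\pi_m(A)[m+1]$), and successive base change to $k$ produces a shifted generator in $\pi_\ast(B)$ whose accumulation forces unbounded homotopy once some $\pi_n(A)$ survives mod $\mathfrak{m}$.

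Once $A$ is discrete, the hypothesis becomes: every finitely generated module over the noetherian ring $R := \pi_0(A)$ has finite Tor-amplitude, i.e.\ finite projective dimension. By the Serre--Auslander--Buchsbaum characterization of regular noetherian rings via finite global dimension, $R$ is regular. The technical heart is the derived Tor computation in the second paragraph: the toy case is a clean long-exact-sequence calculation, but carefully tracking the Postnikov/square-zero structure in the general case --- and controlling the base change to the residue field at each stage, for instance via a Tor spectral sequence --- is the step I would expect to be most delicate.
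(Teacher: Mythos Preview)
The paper does not argue part~(v); it simply cites \cite[Lem.~11.3.3.3]{LurieSAG}.  Your proposal already goes further than that and is organised the same way as Lurie's proof: establish discreteness by showing that some residue field $k$ fails to have finite Tor-amplitude over $A$, then deduce regularity of $\pi_0(A)$ from the Serre--Auslander--Buchsbaum criterion.  The toy computation for $A = k \oplus k[n]$ and the final regularity step are both correct.

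You are right that the passage from the toy case to the general one is where the content lies, but your sketch of that passage is not yet a proof.  Two reductions are being conflated.  The toy ring has $\pi_0(A)=k$, so only a single Postnikov stage is in play; in general $\pi_0(A)$ is merely local with residue field $k$, and the Postnikov extensions $\tau_{\le m}A \to \tau_{\le m-1}A$ are square-zero over the \emph{intermediate truncations} (with possibly nontrivial $k$-invariants), not over $k$.  There is no base change that literally transports the problem to $k\oplus k[n]$, and the ``shifted generator accumulates'' picture does not survive the bookkeeping naively --- for instance, tensor powers of the augmentation ideal of $\pi_0(A)\otimes^{\mathbf L}_A\pi_0(A)$ are $m(n+1)$-connective, but the Tor-amplitude bound only gives an upper bound growing like $md$, which is not yet a contradiction.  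What is actually required is a direct argument that $k\otimes^{\mathbf L}_A k$ has unbounded homotopy whenever some $\pi_n(A)\otimes_{\pi_0(A)}k \neq 0$ with $n\ge 1$; this is the substance of Lurie's proof, and it is more delicate than a Postnikov bookkeeping exercise.  Taking $n$ \emph{minimal} (so that $\tau_{\le n-1}A = \pi_0(A)$ and only one square-zero step is needed to see $\pi_{n+1}(\pi_0(A)\otimes^{\mathbf L}_A\pi_0(A))\cong\pi_n(A)$) is a cleaner starting point than your choice of an arbitrary $n$.
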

    \begin{proof}
      \begin{inlinelist}[label={(\roman*)}]
        \item
        See \cite[Lem.~7.2.4.11 (1-2)]{LurieHA}.
        
        \item
        By definition of perfectness, it suffices to show that $A$ itself is pseudocoherent over $A$.
        If $A$ is noetherian, then this follows from the definition of pseudocoherence; see \cite[7.2.4.11 (3)]{LurieHA} for the non-noetherian case.

        \item
        See \cite[Prop.~7.2.4.23(4)]{LurieHA}.

        \item
        Again it suffices to show that $A$ is coherent over itself, which follows from the definitions when $A$ is eventually coconnective.

        \item
        See \cite[Lem.~11.3.3.3]{LurieSAG}.
      \end{inlinelist}
    \end{proof}

  \subsection{Quasi-coherent, perfect, and coherent complexes}

    Let $\sX$ be a \dA stack.
    We write $\Dqc(\sX)$ for the stable \inftyCat of \emph{quasi-coherent complexes} on $\sX$, see e.g. \cite[Subsect.~3.1]{ToenSurvey}.

    Over an affine derived scheme $\Spec(A)$, a quasi-coherent complex $\sF$ is the same datum as that of the complex $\RGamma(\Spec(A), \sF)$ over $A$, i.e., there is an equivalence
    \[ \Dqc(\Spec(A)) \simeq \D(A) \]
    given by $\sF \mapsto \RGamma(\Spec(A), \sF)$.
    We write $\sO_X \in \Dqc(\Spec(A))$ for the quasi-coherent complex whose derived global sections are $A \in \D(A)$.

    Over a \dA stack $\sX$, a quasi-coherent complex $\sF$ amounts to the data of quasi-coherent complexes $u^*(\sF) \in \Dqc(X)$ for every smooth morphism $u : X \to \sX$ with $X$ affine, together with a homotopy coherent system of compatibilities between these complexes as $u$ varies.
    For example, the structure sheaf $\sO_\sX \in \Dqc(\sX)$ satisfies $u^*(\sO_\sX) \simeq \sO_{X}$ for every $u$.

    \begin{defn}
      \begin{inlinedefnlist}
        \item
        A quasi-coherent complex $\sF$ on an affine derived scheme $X=\Spec(A)$ is \emph{perfect}, \emph{pseudocoherent}, or \emph{coherent} if $\RGamma(X, \sF)$ has the respective property as a complex over $A$.
        
        \item
        A quasi-coherent complex $\sF$ on a \dA stack $\sX$ is \emph{perfect}, \emph{pseudocoherent}, or \emph{coherent}, if $u^*(\sF)$ has the respective property for every smooth morphism $u : X \to \sX$ from an affine $X$.
      \end{inlinedefnlist}
      
      We write
      \[ \Dperf(\sX) \subseteq \Dqc(\sX) ~\text{and}~ \Dcoh(\sX) \subseteq \Dqc(\sX) \]
      for the full subcategories of perfect and coherent complexes on $\sX$.
    \end{defn}

    \begin{rem}
      If $\sX$ is a classical Artin stack, then $\Dqc(\sX)$ can be described as the derived \inftyCat of complexes of $\sO_\sX$-modules (on the lisse-étale topos) with quasi-coherent cohomology.
      See \cite[Prop.~1.3]{HallRydh}.
      In that language, $\Dcoh(\sX)$ is the full subcategory of complexes with bounded and coherent cohomology.
    \end{rem}

    \begin{defn}\label{defn:t-structure}
      Let $\sX$ be a \dA stack and $\sF \in \Dqc(\sX)$ a quasi-coherent complex.
      For an integer $n\in\Z$, we say that $\sF$ is \emph{$n$-connective} if $\pi_i(u^*\sF) = 0$ for all $i < n$ and smooth morphisms $u : X \to \sX$ with $X$ affine.
      We say $\sF$ is \emph{$n$-coconnective} if $\pi_i(u^*\sF) = 0$ for all $i > n$ and smooth morphisms $u : X \to \sX$ with $X$ affine.
      The ($0$-)connective and ($0$-)coconnective complexes define full subcategories
      \[ \Dqc(\sX)_{\ge 0} \sub \Dqc(\sX), \quad \Dqc(\sX)_{\le 0} \sub \Dqc(\sX), \]
      respectively, which together form a canonical t-structure on $\Dqc(\sX)$.
      This t-structure restricts to $\Dcoh(\sX)$, but typically not to $\Dperf(\sX)$ unless $\sX$ is a classical regular stack.
      We write
      \[ \Qcoh(\sX) := \Dqc(\sX)^\heartsuit, \quad \Coh(\sX) := \Dcoh(\sX)^\heartsuit \]
      for the hearts.
      Note that these are insensitive to the derived structure on $\sX$: they are equivalent to the abelian categories of quasi-coherent and coherent sheaves, respectively, on the classical truncation $\sX_\cl$.
    \end{defn}

    \begin{rem}\label{rem:Perf in Coh}
      Let $\sX$ be a \dA stack.
      Suppose that $\sX$ has \emph{bounded structure sheaf}, i.e., that the quasi-coherent complex $\sO_\sX$ is $n$-coconnective for some $n\gg 0$.
      In that case there is an inclusion $\Dperf(\sX) \subseteq \Dcoh(\sX)$.
      Indeed the properties of perfectness and coherence are both local by definition so this follows from \lemref{lem:perfect is pseudocoherent}.
    \end{rem}

    \begin{rem}\label{rem:tcomplt}
      The t-structure on $\Dqc(\sX)$ is left- and right-complete.
      That is, for every quasi-coherent complex $\sF \in \Dqc(\sX)$, the canonical morphisms
      \begin{align*}
        &\sF \to \lim_{n\ge 0} \tau_{\le n}(\sF),\\
        &\colim_{n\ge 0} \tau_{\ge -n}(\sF) \to \sF
      \end{align*}
      are invertible.
      Here $\tau_{\le n} : \Dqc(\sX) \to \Dqc(\sX)_{\le n}$ and $\tau_{\ge -n} : \Dqc(\sX) \to \Dqc(\sX)_{\le -n}$ are the truncation functors, left and right adjoint to the respective inclusions.
      This follows from the fact that $\Dqc(\sX)$ is a limit of stable \inftyCats with left- and right-complete t-structures, and t-exact transition functors (see \cite[Prop.~7.1.1.13]{LurieHA} and \cite[Chap.~3, Lem.~1.5.8]{GaitsgoryRozenblyum}).
    \end{rem}

  \subsection{Tensor, Hom, and inverse/direct images}

    We have the following basic operations on $\Dqc(\sX)$.

    \sssec{}

      For every \dA stack $\sX$, there is a (derived) tensor product $\otimesL$ on $\Dqc(\sX)$.
      It is left adjoint, as a bifunctor, to the internal Hom functor $\uRHom$.
      These are part of a closed symmetric monoidal structure on $\Dqc(\sX)$.

      For every fixed perfect complex $\sF \in \Dperf(\sX)$, the operation
      $$\sG \mapsto \sF \otimesL \sG$$
      preserves perfectness and coherence in $\Dqc(\sX)$.
      (One easily reduces to the case where $\sX = X$ is affine and $\sF = \sO_X$.)

      \begin{rem}\label{rem:a0p8sfh1}
        The symmetric monoidal structure allows the following characterization of perfect complexes over arbitrary $\sX$: namely, $\Dperf(\sX)$ consists of precisely those objects of $\Dqc(\sX)$ (or of $\Dcoh(\sX)$) which are \emph{dualizable} with respect to the symmetric monoidal structure.
        This follows by general nonsense (see \cite[Prop.~4.6.1.11]{LurieHA}) from the case of affines, which is a straightforward exercise (e.g. \cite[Prop.~6.2.6.2]{LurieSAG}).
      \end{rem}

    \sssec{}

      Given a morphism of \dA stacks $f : \sX \to \sY$, there is an inverse image functor
      \begin{equation*}
        \L f^* : \Dqc(\sY) \to \Dqc(\sX)
      \end{equation*}
      which is symmetric monoidal.
      It is right t-exact, i.e., preserves connectivity.
      It also preserves perfect complexes and pseudocoherent complexes.
      If $f$ is flat, then $\L f^* = f^*$ is also left t-exact.

    \sssec{}

      The inverse image functor admits a right adjoint, the direct image functor
      \begin{equation*}
        \R f_* : \Dqc(\sX) \to \Dqc(\sY).
      \end{equation*}
      By adjunction, it is left t-exact, i.e., preserves coconnectivity.
      If it is affine, then $\R f_* = f_*$ is also right t-exact.
      If $f$ is representable and proper (i.e., if it is representable, separated of finite type, and satisfies the valuative criterion), then $\R f_*$ preserves pseudocoherent and coherent complexes.
      See \cite[Thms.~5.6.0.2]{LurieSAG}, and \ssecref{ssec:coh proper} for a discussion of the non-representable case.

      \begin{rem}\label{rem:apdsfb}
        When $f$ is representable by qcqs derived algebraic spaces, $\R f_*$ commutes with colimits and satisfies base change and projection formulas.
        That is:
        \begin{defnlist}
          \item
          The functor $\L f^*$ is compact, i.e., its right adjoint $\R f_*$ preserves colimits.
          See \cite[Cor.~3.4.2.2 (2)]{LurieSAG}.

          \item
          For every homotopy cartesian square of \dA stacks
          \begin{equation*}
            \begin{tikzcd}
              \sX' \ar{r}{g}\ar{d}{p}
                & \sY' \ar{d}{q}
              \\
              \sX \ar{r}{f}
                & \sY,
            \end{tikzcd}
          \end{equation*}
          there is a canonical isomorphism
          \begin{equation*}
            \L q^*\R f_* \to \R g_* \L p^*
          \end{equation*}
          of functors $\Dqc(\sX) \to \Dqc(\sY')$.
          See \cite[Cor.~3.4.2.2 (3)]{LurieSAG}.
          
          \item
          For every $\sF \in \Dqc(\sX)$ and $\sG \in \Dqc(\sY)$, there is a canonical isomorphism
          \begin{equation*}
            \R f_*(\sF) \otimesL \sG \to \R f_*(\sF \otimesL \L f^*(\sG))
          \end{equation*}
          which is natural in $\sF$ and $\sG$.
          In other words, $\R f_*$ is $\Dqc(\sY)$-linear when $\Dqc(\sX)$ is regarded as a $\Dqc(\sY)$-module category via the symmetric monoidal functor $\L f^*$.
          See \cite[Rem.~3.4.2.6]{LurieSAG}.
        \end{defnlist}
        As we will see in the next subsection, representability can be considerably weakened.
      \end{rem}

  \subsection{Finiteness of Tor-amplitude}

    Recall that a morphism $A \to B$ of \scrs is of Tor-amplitude $\le n$ if $B$ is of Tor-amplitude $\le n$ as a complex over $A$ (\defnref{defn:Tor}).
    This condition is fpqc-local on the source and target, so we may extend to \dA stacks in the usual manner.
    For example, a morphism is of Tor-amplitude $\le 0$ if and only if it is flat.
    We also say a morphism is of \emph{finite Tor-amplitude} if it is of Tor-amplitude $\le n$ for some $n\ge 0$.
    We clearly have:

    \begin{prop}
      Let $f : \sX \to \sY$ be a morphism of \dA stacks.
      If $f$ is of Tor-amplitude $\le n$, then $\L f^*$ restricts to a functor
      \[ \L f^*: \Dqc(\sY)_{\le 0} \to \Dqc(\sX)_{\le n}, \]
      i.e., it sends coconnective complexes to $n$-coconnective complexes.
    \end{prop}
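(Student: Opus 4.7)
The plan is to reduce to the affine case, where the statement becomes a direct translation of the definition of Tor-amplitude plus the equivalent characterization given in \defnref{defn:Tor} (namely, the equivalence between testing against discrete and against coconnective complexes, which follows from \cite[Prop.~7.2.4.23(5)]{LurieHA}).

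First I would check that both hypothesis and conclusion are smooth-local on the source. On one hand, coconnectivity of a quasi-coherent complex on $\sX$ is defined in \defnref{defn:t-structure} by testing on smooth covers $u : X \to \sX$ with $X$ affine; on the other hand, Tor-amplitude $\le n$ of a morphism of \scrs is fpqc-local on source and target (so in particular smooth-local), which is what justifies extending the notion to \da stacks in the first place. Since $\L u^* = u^*$ for $u$ smooth (hence flat), the pullback commutes with t-structures and with $\L f^*$, so it suffices to treat the case $\sX = \Spec(B)$ and $\sY = \Spec(A)$ affine with $A \to B$ a morphism of \scrs.

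In the affine case, the statement reads as follows: if $B$ has Tor-amplitude $\le n$ as a complex over $A$, then for every $N \in \D(A)_{\le 0}$ the derived tensor product $B \otimesL_{A} N$ lies in $\D(B)_{\le n}$. This is precisely the content of the second, equivalent formulation in \defnref{defn:Tor}(i). The pullback $\L f^*$ corresponds under the identification $\Dqc(\Spec(A)) \simeq \D(A)$ to the functor $N \mapsto B \otimesL_A N$, so the conclusion follows.

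I do not expect any genuine obstacle: the substance of the argument is the equivalence of the two characterizations of Tor-amplitude in \defnref{defn:Tor}, which is an input from \cite[Prop.~7.2.4.23(5)]{LurieHA}. The only mildly nontrivial point is the bookkeeping that lets us pass from \da stacks to affine derived schemes, but this is entirely routine given that $\L f^*$, connectivity, and Tor-amplitude are all compatible with smooth base change.
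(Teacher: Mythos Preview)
Your proposal is correct and is exactly the unpacking the paper has in mind; note that the paper gives no proof at all (it introduces the proposition with ``We clearly have:''), so your argument is the natural justification.

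One small point of exposition: your reduction paragraph speaks only of localizing on the source (smooth $u : X \to \sX$), yet you then assert both $\sX$ and $\sY$ are affine. To actually reduce the target, you should first base change along a smooth cover $v : Y \to \sY$ with $Y$ affine: flat base change gives $(v')^*\L f^*(\sF) \simeq \L(f')^* v^*(\sF)$ for the pulled-back square, $v^*(\sF)$ remains coconnective since $v$ is flat, and $f'$ inherits Tor-amplitude $\le n$ by fpqc-locality. Then localize on the source as you describe. This is entirely routine, but as written your reduction step elides it.
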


    \begin{rem}
      Recall that $\L f^*$ always preserves pseudocoherent complexes.
      Thus when $f$ is of finite Tor-amplitude, it also preserves coherent complexes.
    \end{rem}

    \begin{rem}
      If $f$ is proper representable, locally almost of finite presentation, and of finite Tor-amplitude, then $\R f_*$ preserves perfect complexes.
      See \cite[Thm.~6.1.3.2]{LurieSAG}.
    \end{rem}

    Recall that a morphism of \dA stacks $f : \sX \to \sY$ is \emph{quasi-smooth} if it is locally of finite presentation and the relative cotangent complex $\sL_{\sX/\sY}$ is of Tor-amplitude $\le 1$.
    Equivalently, it factors smooth-locally on the source as the inclusion $\sX \hook \sY'$ of the derived zero locus of a section of a vector bundle over a stack $\sY'$ which is smooth over $\sY$; see \cite[Prop.~2.3.14]{KhanRydh}.
    This is a derived version of the notion of \emph{local complete intersection} morphism.

    \begin{lem}\label{lem:qsm fTa}
      Let $f : \sX \to \sY$ be a quasi-smooth morphism of \dA stacks.
      Then $f$ is of finite Tor-amplitude.
    \end{lem}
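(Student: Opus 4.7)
The plan is to reduce to a local Koszul resolution via the structural characterization of quasi-smooth morphisms just recalled. Since having Tor-amplitude $\le n$ is fpqc-local on the source, I may work fpqc-locally on $\sX$ and assume that $f$ factors as a composite $\sX \xrightarrow{i} \sY' \xrightarrow{p} \sY$, where $p : \sY' \to \sY$ is smooth and $i : \sX \hook \sY'$ is the inclusion of the derived zero locus of a section $s$ of a vector bundle $E$ of some rank $r$ on $\sY'$.

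Smoothness of $p$ implies flatness, so $\L p^*$ is t-exact and $p$ has Tor-amplitude $\le 0$. For $i$, I work further fpqc-locally on $\sY'$ so as to assume $\sY' = \Spec(B)$ is affine and that $s$ is given by global elements $f_1, \dots, f_r \in B$. Then $\sX \simeq \Spec(A)$ with $A \simeq B \otimesL_{B[x_1, \dots, x_r]} B$ (the two structure maps sending $x_j$ to $f_j$ and to $0$ respectively), and the classical Koszul complex on $(f_1, \dots, f_r)$ furnishes a resolution of $A$ by free $B$-modules concentrated in homological degrees $[0, r]$. Tensoring this resolution over $B$ with any discrete $B$-module $N$ yields a complex with $\pi_j = 0$ for $j > r$, showing that $i$ has Tor-amplitude $\le r$.

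Combining the two bounds via the routine additivity of Tor-amplitude under composition---which follows by applying the equivalent formulation of \defnref{defn:Tor} in terms of coconnective test objects to $\L f^* = \L i^* \circ \L p^*$---I conclude that $f$ has Tor-amplitude $\le r$ over the chosen chart. The only substantive step is the Koszul resolution bounding the Tor-amplitude of $i$; the rest of the argument is formal. The only potential wrinkle is that the virtual codimension $r$ may only be locally constant on $\sX$, but this is harmless since ``finite Tor-amplitude'' for a morphism of stacks is itself interpreted in the fpqc-local sense.
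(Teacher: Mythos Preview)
Your proof is correct and follows essentially the same approach as the paper: reduce via the local factorization of quasi-smooth morphisms to a smooth map (Tor-amplitude $\le 0$) composed with the inclusion of a derived zero locus of functions on an affine, then bound the Tor-amplitude of the latter by a Koszul argument. The only cosmetic difference is that the paper handles the zero locus of $n$ functions by induction on $n$ using the exact triangle $\sO_Y \xrightarrow{g_1} \sO_Y \to f_*(\sO_X)$ in the $n=1$ case, whereas you invoke the full Koszul resolution directly; these are equivalent.
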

    \begin{proof}
      If $f$ is smooth, then it is of Tor-amplitude $\le 0$.
      Therefore we may assume that $f$ is the inclusion of the derived zero locus $\sX = X$ of $n$ functions $g_1,\ldots,g_n$ on an affine derived scheme $\sY = Y$.
      We claim that $f$ is of Tor-amplitude $\le n$.
      By induction, we may assume that $n=1$.
      Then the exact triangle
      \begin{equation*}
        \sO_Y \xrightarrow{g_1} \sO_Y \to f_*(\sO_X)
      \end{equation*}
      shows that $f$ is of Tor-amplitude $\le 1$.
    \end{proof}

  \subsection{Finiteness of cohomological dimension}
  \label{ssec:finite coh dim}

    The functor $f_*$ is well-behaved for many non-representable morphisms as well.
    The relevant condition is as follows.

    \begin{defn}
      Let $f : \sX \to \sY$ be a morphism of \dA stacks.
      We say that $f$ is \emph{of cohomological dimension $\le n$} if $\R f_*$ restricts to a functor
      \[ \R f_* : \Dqc(\sX)_{\ge 0} \to \Dqc(\sY)_{\ge -n}, \]
      i.e., it sends connective complexes to $(-n)$-connective complexes.
      Equivalently, $\R f_*(\sF)$ is $(-n)$-connective for every \emph{discrete} $\sF \in \Qcoh(\sX)$ (see \cite[Lem.~A.1.6]{HalpernLeistnerPreygel}).
      We say that $f$ is \emph{of finite cohomological dimension} if it is of cohomological dimension $\le n$ for some integer $n\in\bN$.
    \end{defn}

    \begin{exam}
      We say that a \dA stack $\sX$ is of cohomological dimension $\le n$ if the structural morphism $f : \sX \to \Spec(\bZ)$ is.
      Note that this is the case if and only if the complex
      \[ \RGamma\big(\Spec(\bZ), \R f_*(\sF)\big) \simeq \RGamma(\sX, \sF) \]
      is $(-n)$-connective for all quasi-coherent sheaves $\sF \in \Qcoh(\sX)$.
      In other words, if
      $$\H^i(\sX, \sF) = \pi_{-i}\left(\RGamma(\sX, \sF)\right) = 0$$
      for all $i>n$ and all $\sF \in \Qcoh(\sX)$.
    \end{exam}

    \begin{defn}
      We say that a morphism $f : \sX \to \sY$ is \emph{universally of finite cohomological dimension} if for every morphism $\sY' \to \sY$ with $\sY'$ qcqs, the base change $f' : \sX \fibprod_\sY \sY' \to \sY'$ is of finite cohomological dimension.
    \end{defn}

    In \cite{HallRydh}, qcqs morphisms that are universally of finite cohomological dimension are called ``concentrated''.

    \begin{rem}
      If $\sY$ is quasi-compact with quasi-affine diagonal, then any qcqs morphism $f : \sX \to \sY$ of finite cohomological dimension is also \emph{universally} of finite cohomological dimension.
      See \cite[Lem.~2.5(v)]{HallRydh}.
    \end{rem}

    Our main interest in this condition is the good behaviour of $\R f_*$ it implies (compare \remref{rem:apdsfb}):

    \begin{thm}\label{thm:uni fin coh dim}
      Let $f : \sX \to \sY$ be a quasi-compact quasi-separated morphism of \dA stacks.
      If $f$ is universally of finite cohomological dimension, then $\R f_*$ commutes with colimits and satisfies the base change and projection formulas (as in \remref{rem:apdsfb}).
    \end{thm}
    \begin{proof}
      See \cite[Thm.~2.6]{HallRydh} and \cite[Prop.~A.1.5]{HalpernLeistnerPreygel}.
    \end{proof}

    \begin{rem}\label{rem:afp80h1}
      The fact that $\R f_*$ commutes with colimits (i.e., that $\L f^*$ is a compact functor) implies by adjunction that $\L f^*$ preserves compact objects.
      In particular, \thmref{thm:uni fin coh dim} implies that if $\sX$ is a qcqs \dA stack which is universally of finite cohomological dimension, then the structure sheaf $\sO_\sX$ is compact as an object of $\Dqc(\sX)$.
    \end{rem}

    \begin{rem}\label{rem:ophunas}
      A morphism $f : \sX \to \sY$ is of finite cohomological dimension if and only if the underlying morphism $f_\cl : \sX_\cl \to \sY_\cl$ of classical truncations is of finite cohomological dimension.
      Indeed, this follows by consideration of the following commutative square
      \[ \begin{tikzcd}
        \Qcoh(\sX_\cl)\ar{r}{\R f_{\cl,*}}\ar{d}{i_{\sX,*}}
        & \D(\sY_\cl)_{\le 0}\ar{d}{i_{\sY,*}}
        \\
        \Qcoh(\sX)\ar{r}{\R f_*}
        & \D(\sY)_{\le 0},
      \end{tikzcd} \]
      where the vertical arrows are direct image along the inclusions of the classical truncations.
      Recall that $i_{\sX,*}$ and $i_{\sY,*}$ are t-exact (since $i_\sX$ and $i_\sY$ are affine morphisms) and that the left-hand arrow is an equivalence (\defnref{defn:t-structure}).
    \end{rem}

    \begin{exam}\label{exam:a-psdf8}
      Let $f : \sX \to \sY$ be a morphism of \dA stacks.
      If $f$ is representable by qcqs derived algebraic spaces, then it is universally of finite cohomological dimension.
      See \cite[Prop.~2.5.4.4, Thm.~3.4.2.1]{LurieSAG}.
    \end{exam}

    \begin{exam}\label{exam:apsdufgp01}
      Let $G$ be an affine group scheme of finite presentation over an affine scheme $S$ and consider the morphism $f : BG \to S$, the structural morphism of the classifying stack.
      \begin{defnlist}
        \item
        If $G$ is linearly reductive, then $f$ is universally of cohomological dimension zero.
        
        \item
        If $S$ is the spectrum of a field of characteristic zero, then $f$ is universally of finite cohomological dimension (see \cite[Thm.~1.4]{HallRydhGroups}).
      \end{defnlist}
    \end{exam}

    \begin{exam}\label{exam:finite cohomological dimension}
      Let $\sX$ be a qcqs derived $1$-Artin stack.
      Then $\sX$ is of finite cohomological dimension under any one of the following conditions:
      \begin{defnlist}
        \item
        $\sX$ is of characteristic zero and its stabilizers at all points are affine.

        \item
        $\sX$ is of positive characteristic and its stabilizers at all points are linearly reductive.

        \item
        $\sX$ is of mixed characteristic and its stabilizers at all points are ``nice'' (extensions of tame finite étale groups by groups of multiplicative type).

        \item
        $\sX$ is of mixed characteristic, its (classical) inertia stack is of finite presentation over $\sX_\cl$, its stabilizers at all points are affine, and its stabilizers at points of positive characteristic are linearly reductive.
      \end{defnlist}
      Similarly, a morphism of qcqs \dA stacks $f : \sX \to \sY$ is of finite cohomological dimension if its fibres $\sX \fibprod_\sY Y$ satisfy one of the above conditions, for all morphisms $v : Y \to \sY$ with $Y$ affine.
      See \cite[Thm.~1.4.2]{DrinfeldGaitsgoryFiniteness} and \cite[Thm.~2.1]{HallRydhGroups}.
    \end{exam}

  \subsection{Cohomological properness}
  \label{ssec:coh proper}

    We would now like to study the question of when the direct image functor preserves coherent complexes.

    \begin{defn}\label{defn:coh proper}
      Let $f : \sX \to \sY$ be a morphism of \dA stacks.
      Assume that $f$ is of finite cohomological dimension.
      We say that $f$ is \emph{cohomologically proper} if for every coherent sheaf $\sF \in \Coh(\sX)$, the direct image $\R f_*(\sF) \in \Dqc(\sY)$ is a coherent complex.
    \end{defn}

    \begin{rem}
      Since $f$ is of finite cohomological dimension, $\R f_*(\sF) \in \Dqc(\sY)$ is bounded (cohomologically) for any coherent sheaf $\sF \in \Coh(\sX)$.
      Thus the condition of \defnref{defn:coh proper} is that it has coherent cohomologies, i.e., that for every smooth morphism $u : Y \to \sY$ with $Y$ affine, the groups
       \[ \H^i(Y, u^* \R f_*(\sF)) = \pi_{-i}(\RGamma(Y, u^* \R f_*(\sF))) \]
      are finitely generated over $\H^0(Y, \sO_Y)$ for all $i$.
    \end{rem}

    \begin{rem}
      If $f : \sX \to \sY$ is cohomologically proper, then any coherent \emph{complex} $\sF \in \Dcoh(\sX)$ also has coherent direct image $\R f_*(\sF) \in \Dcoh(\sY)$.
      This follows by induction on the Postnikov tower, i.e., using the exact triangles
      \[ \pi_n(\sF)[n] \to \tau_{\le n}(\sF) \to \tau_{\le n-1}(\sF) \]
      coming from the (bounded) t-structure on $\Dcoh(\sX)$.
    \end{rem}

    \begin{rem}
      As in \remref{rem:ophunas}, the condition of cohomological properness may be checked on classical truncations.
    \end{rem}

    \begin{prop}
      Let $f : \sX \to \sY$ be a morphism of finite cohomological dimension between noetherian derived $1$-Artin stacks.
      If $f$ is proper (of finite type, separated and satisfying the valuative criterion), then it is cohomologically proper.
    \end{prop}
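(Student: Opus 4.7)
The strategy is to reduce, by successive simplifications, to the known coherence theorem for higher direct images under a proper morphism of classical noetherian schemes.

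\emph{Step 1: Reduction to the classical case.} Properness of $f$ depends only on $f_\cl : \sX_\cl \to \sY_\cl$, and by \remref{rem:ophunas} so does the hypothesis of finite cohomological dimension. The conclusion is also a statement about classical truncations, since $\Coh(\sX) = \Coh(\sX_\cl)$ (\defnref{defn:t-structure}) and $\R f_* \sF$ agrees with $\R f_{\cl,*} \sF$ under the identifications of hearts. So I may assume $\sX$ and $\sY$ are classical noetherian algebraic stacks.

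\emph{Step 2: Reduction to affine target.} Coherence is smooth-local on $\sY$, and $\R f_*$ commutes with flat base change by \propref{prop:uni fin coh dim}. Smooth base change to a presentation then reduces the question to the case $\sY = \Spec(A)$ with $A$ noetherian.

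\emph{Step 3: Boundedness.} For $\sF \in \Coh(\sX)$, left t-exactness of $\R f_*$ gives $\R f_*(\sF) \in \Dqc(\sY)_{\le 0}$, and the finite cohomological dimension hypothesis gives $\R f_*(\sF) \in \Dqc(\sY)_{\ge -N}$ for some $N$. Thus $\R f_*(\sF)$ is cohomologically bounded and it suffices to show each sheaf $\R^i f_*(\sF) := \pi_{-i}\R f_*(\sF)$ on $\sY$ is coherent.

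\emph{Step 4: Coherence of the $\R^i f_*(\sF)$.} At this point the assertion is the classical coherence theorem for a proper morphism of noetherian algebraic stacks, which one establishes by Chow's lemma in the style of Olsson: there exists a proper surjection $g : X' \to \sX$ from a projective $\sY$-scheme $X'$. Noetherian induction on the support of $\sF$, combined with the exact triangle comparing $\sF$ with $\R g_* \L g^* \sF$ and the observation that the cone is supported on a proper closed substack of strictly smaller dimension, reduces the coherence of $\R^i f_*(\sF)$ to that of $\R^i (f\circ g)_*(\L g^* \sF)$. Since $f\circ g$ is a proper morphism of noetherian schemes, the latter is Grothendieck's classical finiteness theorem (EGA~III).

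The main obstacle is Step 4: while the scheme-theoretic input is classical, setting up the dévissage at the level of Artin stacks requires Olsson's version of Chow's lemma and some care in comparing the derived pushforward $\R g_* \L g^* \sF$ with $\sF$ modulo smaller-dimensional support. The finite cohomological dimension hypothesis is not used in Step 4 itself but is essential in Step 3 to promote ``coherent on every cohomology sheaf'' to ``coherent as an object of $\Dcoh(\sY)$'': without it, even in well-behaved examples such as $BG$ for a non-reductive $G$, $\R f_*$ of a coherent sheaf can have nonzero cohomology in infinitely many degrees and fail to land in $\Dcoh$.
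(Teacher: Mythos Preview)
The paper's own proof is simply a citation to Faltings and Olsson, so your outline is doing strictly more than the paper. Steps~1--3 are fine and your overall strategy (reduce to the classical case, make the base affine, then Chow's lemma plus noetherian induction to reduce to EGA~III) is exactly Olsson's.

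The gap is in the specific d\'evissage you describe in Step~4. Olsson's Chow lemma produces a proper surjection $g : X' \to \sX$ from a quasi-projective $\sY$-scheme, but \emph{not} one that is an isomorphism over a dense open --- indeed this is impossible whenever $\sX$ has nontrivial stabilizers. Consequently the cone of the unit map $\sF \to \R g_* \L g^* \sF$ is typically supported on all of $\sX$, not on a proper closed substack (think of $\sX = BG$ and $g$ the quotient map $\Spec(k) \to BG$: then $\R g_* g^* \sF$ is the coinduced representation, and the cone sees everything). So the reduction you state does not go through.

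Olsson's actual d\'evissage runs differently: the Leray spectral sequence for $f \circ g$ plus coherence for the representable proper $g$ show that $\R^p f_*$ of anything of the form $\R^q g_* F$ (with $F$ coherent on $X'$) is coherent. Then one uses that over a dense open $U \subseteq \sX$ every coherent $\sF|_U$ admits a surjection from some $(g_* F)|_U$, and noetherian induction handles the complement. Your acknowledgment that Step~4 ``requires some care'' is right, but the care needed is a different mechanism than the one you wrote down.
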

    \begin{proof}
      See \cite[Thm.~1]{Faltings}, \cite[Thm.~1.2]{Olsson}.
    \end{proof}

    \begin{prop}
      Let $f : \sX \to \sY$ be a cohomologically proper morphism of \dA stacks that is universally of finite cohomological dimension.
      If $f$ is moreover almost of finite presentation and of finite Tor-amplitude, then $\R f_*$ also preserves perfect complexes.
    \end{prop}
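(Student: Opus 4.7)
The plan is to apply \lemref{lem:perfect is pseudocoherent}~\ref{item:perfect is pseudocoherent/fTa}, so it suffices to show that $\R f_*(\sF)$ is pseudocoherent and of finite Tor-amplitude for each perfect $\sF \in \Dperf(\sX)$. As the claim is local on $\sY$, I would reduce to the case where $\sY$ is qcqs; then $\sX$ is qcqs too (since $f$ is qcqs), so the fixed $\sF$ admits some global Tor-amplitude bound $\le n$. Let $m$ be a Tor-amplitude bound for $f$, and let $d$ be a uniform cohomological-dimension bound for $\R f_*$, furnished by the hypothesis of universal finite cohomological dimension.

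For finite Tor-amplitude of $\R f_*(\sF)$, I would combine the projection formula of \propref{prop:uni fin coh dim} with left t-exactness of $\R f_*$: for any discrete $\sG \in \Qcoh(\sY)$,
\[ \R f_*(\sF) \otimes \sG \simeq \R f_*\bigl(\sF \otimes \L f^*(\sG)\bigr). \]
The factor $\L f^*(\sG)$ is $m$-coconnective by the Tor-amplitude bound on $f$, whence $\sF \otimes \L f^*(\sG)$ is $(n+m)$-coconnective by the Tor-amplitude bound on $\sF$, and left t-exactness of $\R f_*$ preserves this. Hence $\R f_*(\sF)$ has Tor-amplitude $\le n+m$ over $\sY$.

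For pseudocoherence, I would exploit the Postnikov tower of $\sF$ against cohomological properness and the cohomological-dimension bound $d$. Each truncation $\tau_{\le N}\sF$ is coherent (pseudocoherent and bounded above), so cohomological properness yields $\R f_*(\tau_{\le N}\sF) \in \Dcoh(\sY)$, in particular with finitely generated homotopy in each degree. The fiber $\tau_{>N}\sF$ is $(N+1)$-connective, so $\R f_*(\tau_{>N}\sF)$ is $(N+1-d)$-connective, and the fiber sequence
\[ \R f_*(\tau_{>N}\sF) \to \R f_*(\sF) \to \R f_*(\tau_{\le N}\sF) \]
then produces an isomorphism $\pi_i(\R f_*\sF) \cong \pi_i(\R f_*(\tau_{\le N}\sF))$ for every $N \ge i + d$; the target is finitely generated. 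Eventual connectivity of $\R f_*(\sF)$ follows similarly: if $\sF$ is $k$-connective, then $\R f_*(\sF)$ is $(k-d)$-connective. The main subtlety is that we have no uniform bound on where $\tau_{\le N}\sF$ stabilizes for varying $N$; it is precisely the \emph{uniform} constant $d$ supplied by universal finite cohomological dimension that makes the degree-by-degree stabilization of the Postnikov tower strong enough to conclude pseudocoherence of the (possibly unbounded-above) pushforward, with the almost-finite-presentation hypothesis quietly ensuring that all pseudocoherence/truncation operations on $\sX$ behave as required.
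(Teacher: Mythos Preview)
Your proof is correct and follows essentially the same strategy as the paper: reduce to checking pseudocoherence and finite Tor-amplitude separately via \lemref{lem:perfect is pseudocoherent}\ref{item:perfect is pseudocoherent/fTa}. The paper's version is terser in two respects: it first base-changes to affine $\sY$ (using \propref{prop:uni fin coh dim}) so that the ring-level criteria apply directly, and it outsources the finite Tor-amplitude check to \cite[Prop.~6.1.3.1]{LurieSAG} rather than spelling out the projection-formula argument you give; your Postnikov-tower argument for pseudocoherence is exactly the content hidden behind the paper's unjustified phrase ``the coherent complex $\R f_*(\sF)$'', so your write-up is in fact more complete on that point.
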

    \begin{proof}
      Since $f$ is universally of finite cohomological dimension, formation of $\R f_*$ is stable under base change (\thmref{thm:uni fin coh dim}).
      Since perfectness is smooth-local by definition, we may assume that $\sY = Y$ is affine.
      By \lemref{lem:perfect is pseudocoherent}\ref{item:perfect is pseudocoherent/fTa} it will suffice to show that the coherent complex $\R f_*(\sF) \in \Dcoh(Y)$ is of finite Tor-amplitude for every perfect complex $\sF \in \Dperf(\sX)$.
      Now the claim follows from \cite[Prop.~6.1.3.1]{LurieSAG}.
    \end{proof}

  \subsection{The resolution property}
  \label{ssec:res}

    We make a short digression to record a very useful property of stacks, studied in detail in \cite{ThomasonHilbert,TotaroRes,Gross}.
    For our purposes it will also be useful to consider a stronger  ``derived'' variant of the definition.

    \begin{defn}
      Let $\sX$ be a qcqs \dA stack.
      \begin{defnlist}
        \item
        We say that $\sX$ has the \emph{resolution property} if, for every quasi-coherent sheaf $\sF \in \Qcoh(\sX)$, there exists a small collection $\{\sE_\alpha\}_{\alpha}$ of finite locally free sheaves on $\sX$ and a morphism
        \[ \phi : \bigoplus_\alpha \sE_\alpha \twoheadrightarrow \sF \]
        which is surjective (on $\pi_0$).
        
        \item
        We say that $\sX$ has the \emph{derived resolution property} if the above condition holds for every quasi-coherent \emph{complex} $\sF \in \Dqc(\sX)$.
      \end{defnlist}
    \end{defn}

    \begin{rem}
      It suffices to check the derived resolution property for connective complexes $\sF \in \Dqc(\sX)_{\ge 0}$ only, since the connective cover $\tau_{\ge 0}(\sF) \to \sF$ is bijective on $\pi_0$ for any $\sF \in \Dqc(\sX)$.
    \end{rem}

    \begin{rem}
      If $\sX$ admits the (resp. derived) resolution property and is quasi-compact, and $\sF \in \Qcoh(\sX)$ is of finite type (resp. $\sF \in \Dqc(\sX)$ has $\pi_0(\sF)$ of finite type), then there exists a surjection $\sE \twoheadrightarrow \sF$ where $\sE$ is a single finite locally free sheaf.
    \end{rem}

    \begin{prop}\label{prop:ou0pbasp0b1}
      Let $f : \sX \to \sY$ is a quasi-projective morphism of \dA stacks.
      If $\sY$ has the (resp. derived) resolution property, then so does $\sX$.
    \end{prop}
    \begin{proof}
      See \cite[Lem.~A.8]{KhanRavi} for the usual resolution property; the argument for the derived version is the same.
    \end{proof}

    \begin{exam}\label{exam:i9ytfgsa91}
      Let $G$ be a \emph{linearly reductive} affine group scheme over an affine scheme $S$.
      If $BG$ has the resolution property (e.g., if $G$ is embeddable \cite[Rem.~2.5]{AlperHallRydh}), then it also has the derived resolution property.
      Indeed, let $\sF \in \Dqc(BG)_{\ge 0}$ be a connective quasi-coherent complex.
      By the resolution property there exists a surjection $\phi_0 : \bigoplus_\alpha \sE_\alpha \twoheadrightarrow \pi_0(\sF)$ for some small collection of finite locally free sheaves $\sE_\alpha$ on $BG$.
      Note that each $\sE_\alpha$ is projective as an object of $\Dqc(BG)_{\ge 0}$, since the underlying object of $\Dqc(S)_{\ge 0}$ is projective and $BG$ is cohomologically affine (see e.g. \cite[Lem.~2.17]{HoyoisEquivariant}).
      Therefore, there exists a lift $\phi : \bigoplus_\alpha \sE_\alpha\twoheadrightarrow \sF$ of $\phi_0$, still surjective on $\pi_0$.
    \end{exam}

    \begin{exam}\label{exam:0a7s7h01}
      Let $S$ be the spectrum of a field of characteristic zero and let $G$ be an affine group scheme of finite type over $S$.
      Then $BG$ has the derived resolution property.
      Indeed, $G$ is a closed subgroup of $\GL_{n,S}$ for some $n$.
      Since the latter is linearly reductive (since $S$ is of characteristic zero), \examref{exam:i9ytfgsa91} implies that $B\GL_n$ has the derived resolution property.
      By \propref{prop:ou0pbasp0b1} it will therefore suffice to show that the morphism $BG \to B\GL_{n,S}$ is quasi-projective.
      Indeed, $[\GL_{n,S}/G]$ admits a $\GL_{n,S}$-equivariant immersion $[\GL_{n,S}/G] \hook \bP_S(\sE)$ over $S$ for some finite locally free sheaf $\sE$ on $BG$ (see the proof of \cite[Cor.~5.5.6]{Springer}), so that $BG \to B\GL_{n,S}$ is the quotient by $\GL_{n,S}$ of the quasi-projective morphism $[\GL_{n,S}/G] \to S$.
    \end{exam}

  \subsection{Perfect stacks}
  \label{ssec:perfect}
    
    \begin{defn}
      Let $\sX$ be a \dA stack.
      We say that $\sX$ is \emph{perfect} if the canonical functor
      $$\Ind(\Dperf(\sX)) \to \Dqc(\sX)$$
      is an equivalence.
      % If moreover the canonical functor
      % $$\Ind(\Dperf(\sX~\mrm{on}~Z)) \to \Dqc(\sX~\mrm{on}~Z)$$
      % is an equivalence for every cocompact closed subset $Z \sub \abs{\sX}$, then we say that $\sX$ is \emph{absolutely perfect}.
    \end{defn}
    
    % In other words, $\sX$ is absolutely perfect if $\Dqc(\sX~\mrm{on}~Z)$ is compactly generated for every $Z$, and the compact objects are precisely the perfect complexes supported on $Z$.
    In other words, $\sX$ is perfect if $\Dqc(\sX)$ is compactly generated, and the compact objects are precisely the perfect complexes.
    See \cite[Chap.~1, \S 7]{GaitsgoryRozenblyum} for background on compactly generated stable \inftyCats and ind-completion.
    % Here $\Dqc(\sX~\mrm{on}~Z)$ is the full subcategory of $\Dqc(\sX)$ spanned by quasi-coherent complexes $\sF$ supported on $Z$ (i.e., $j^*(\sF) = 0$ for $j : \sX \setminus Z \hook \sX$ the complementary open immersion), and similarly for $\Dperf(\sX~\mrm{on}~Z)$.

    \begin{rem}
      Various definitions of perfectness can be found in the literature.
      For example, \cite[Def.~3.2]{BenZviFrancisNadler} and \cite[Def.~9.4.4.1]{LurieSAG} require the further condition that $\sX$ has affine or quasi-affine diagonal, respectively.
    \end{rem}

    \begin{thm}\label{thm:nice}
      Let $\sX$ be a qcqs derived $1$-Artin stack whose stabilizer at every point is nice (i.e., an extension of a finite étale group scheme, of order prime to the residue characteristics of $S$, by a multiplicative type group scheme).
      Then $\sX$ is perfect.
    \end{thm}

    This result was proven jointly with C.~Ravi, and is stated in \cite[Thm.~2.24]{KhanRavi} under the assumption that $\sX$ has separated diagonal.
    % and in \cite[Thm.~A.3.2]{BachmannKhanRaviSosnilo} under the assumption that it has affine diagonal.
    This assumption was not used in the proof, which we will review below in \ssecref{ssec:perfcrit}.

    \begin{exam}
      Here are some special cases of \thmref{thm:nice}:
      \begin{defnlist}
        \item
        \emph{Derived algebraic spaces.}
        Every qcqs derived algebraic space is perfect.
        This case was first proven by To\"en \cite{ToenAzumaya} (see also \cite[Prop.~9.6.1.1]{LurieSAG}), building on previous work like \cite{ThomasonTrobaugh} and \cite[Thm.~3.1.1]{BondalVdB} for classical schemes.

        \item
        \emph{Quotient stacks.}
        Quotients of qcqs derived algebraic spaces by nice group schemes have nice stabilizers (since subgroups of nice groups are nice), hence are perfect.
        This was proven in \cite[Prop.~3.3]{KrishnaRavi} (based on \cite{HallRydh,HallRydhGroups}).

        \item
        \emph{Deligne--Mumford stacks and tame Artin stacks.}
        Any \emph{tame} qcqs derived Deligne--Mumford stack is perfect.
        (Recall that tameness means that the stabilizers at all points, which are finite by assumption, are of order coprime to the residue characteristics of the base and hence nice.)
        This was proven in \cite[Thm.~4.7]{ToenAzumaya} in the case of finite inertia, and in the classical case it follows by combining \cite[Thm.~A]{HallRydh} and \cite[Thm.~C]{HallRydhGroups}.
        In fact, the latter shows more generally that tame Artin stacks in the sense of \cite{AbramovichOlssonVistoli} are perfect.

        \item
        \emph{Classical Artin stacks with nice stabilizers.}
        % Classical $1$-Artin stacks with nice stabilizers are perfect.
        The special case of \thmref{thm:nice} for $\sX$ classical was proven under additional assumptions in \cite[Thm.~5.1]{AlperHallRydhLuna}, \cite[Prop.~14.1]{AlperHallRydh}, and \cite[Cor.~3.17]{HoyoisKrishna} (the latter based on \cite{AlperHallHalpernLeistnerRydh} and \cite[Thm.~C]{HallRydh}).
      \end{defnlist}
    \end{exam}

    We also have the following result for quotient stacks:

    \begin{thm}\label{thm:qfund}
      Let $\sX = [X/G]$ be the quotient of a quasi-projective derived scheme $X$ by a linear action of an affine group scheme $G$ of finite presentation over an affine scheme $S$.
      If $S$ is the spectrum of a field of characteristic zero, or if $G$ is linearly reductive and embeddable, then $\sX$ is perfect.
    \end{thm}

    These cases were first proven in \cite[Cor.~3.22]{BenZviFrancisNadler} and \cite[Thm.~2.24, A.9.1(a)]{KhanRavi}, respectively.

    % \begin{exam}\label{exam:a0sh1130}
    %   Let $X$ be a quasi-projective derived scheme with an action of an affine group scheme $G$ of finite type over a field of characteristic zero, or a linearly reductive affine group scheme over an affine base.
    %   Then the quotient stack $\sX = [X/G]$ is absolutely perfect.
    % \end{exam}

    % \begin{rem}\label{rem:icshvfo1}
    %   If $\sX$ is perfect, then it is in particular of finite cohomological dimension.
    %   \adeel{this doesn't show UFCD}
    %   Indeed, let $f : \sX \to \Spec(\bZ)$ be the structural morphism.
    %   To show that the functor $\L f^* : \Dqc(\Spec(\bZ)) \to \Dqc(\sX)$ is compact, it is enough by \cite[Prop.~5.5.7.2(2)]{LurieHTT} to show that it preserves compact objects.
    %   But since $\sX$ is perfect, these coincide with the perfect complexes.
    % \end{rem}

  \ssec{Perfectness criteria}
  \label{ssec:perfcrit}

    In this section we give some criteria to check perfectness of stacks, most notably \thmref{thm:perfect scallop}.
    We then use these to prove Theorems~\ref{thm:nice} and \ref{thm:qfund}.
    First note the following:

    \begin{lem}\label{lem:a-shg1123}
      Let $\sX$ be a \dA stack.
      \begin{thmlist}
        \item\label{item:a-shg1123/=>}
        If $\sX$ is quasi-separated then every compact object of $\Dqc(\sX)$ is a perfect complex.
        
        \item\label{item:a-shg1123/<=}
        If $\sO_\sX \in \Dqc(\sX)$ is compact, then every perfect complex on $\sX$ is a compact object of $\Dqc(\sX)$.
      \end{thmlist}
    \end{lem}
    \begin{proof}
      For \itemref{item:a-shg1123/=>}, let $\sF \in \Dqc(\sX)$ be a compact object.
      It is enough to show that $u^*(\sF) \in \Dqc(X)$ is perfect for every smooth morphism $u : X \to \sX$ with $X$ affine.
      Since $\sX$ is quasi-separated, $u$ is qcqs (and representable), so $u^*$ is a compact functor (\thmref{thm:uni fin coh dim}, \examref{exam:a-psdf8}).
      In particular, $u^*(\sF)$ is compact and hence also perfect, since $X$ is affine.

      For \itemref{item:a-shg1123/<=}, note that for any $\sF \in \Dperf(\sX)$ we have
      \[
        \Maps_{\Dqc(\sX)}(\sF, -)
        \simeq \Maps_{\Dqc(\sX)}(\sO_\sX, \sF^\vee \otimesL -),
      \]
      where $\sF^\vee = \uRHom(\sF, \sO_\sX)$ is the derived dual (see \remref{rem:a0p8sfh1}).
      Since $\otimes^\bL$ commutes with colimits in each argument and $\sO_\sX$ is compact, it follows that this functor commutes with colimits, i.e., that $\sF$ is compact.
    \end{proof}

    \begin{lem}\label{lem:af0p-h11}
      Let $\sX$ be a qcqs \dA stack.
      Then $\sX$ is perfect if and only if the following conditions hold:
      \begin{thmlist}
        \item
        The structure sheaf $\sO_\sX$ is a compact object of $\Dqc(\sX)$.

        \item\label{item:af0p-h11/pu0ug7}
        For every nonzero quasi-coherent complex $\sF \in \Dqc(\sX)$, there exists a perfect complex $\sE \in \Dperf(\sX)$ and a nonzero morphism $\phi : \sE \to \sF$.
      \end{thmlist}
    \end{lem}
    \begin{proof}
      By definition, $\sX$ is perfect if and only if $\Dqc(\sX)$ is compactly generated and its compact objects are precisely the perfect complexes.
      The latter holds precisely when $\sO_\sX \in \Dqc(\sX)$ is compact (one direction is \lemref{lem:a-shg1123}\itemref{item:a-shg1123/<=}, the other is clear since $\sO_\sX$ is always perfect).
      Compact generation is equivalent by \cite[Cor.~C.6.3.3]{LurieSAG} (which applies because $\Dqc(\sX)$ is presentable) to the assertion that for every nonzero $\sF \in \Dqc(\sX)$, there exists a compact object $\sE \in \Dperf(\sX)$ and a nonzero morphism $\phi : \sE \to \sF$.
      The claim follows.
    \end{proof}

    \begin{cor}\label{cor:as0fg1}
      % Let $G$ be a group algebraic space of finite presentation over an affine scheme $S$.
      % If $BG$ is universally of finite cohomological dimension and has the derived resolution property, then it is absolutely perfect.
      Let $\sX$ be a qcqs \dA stack.
      If $\sO_\sX \in \Dqc(\sX)$ is compact (e.g. $\sX$ is universally of finite cohomological dimension) and $\sX$ has the derived resolution property, then $\sX$ is perfect.
    \end{cor}
    \begin{proof}
      Let us check the condition \lemref{lem:af0p-h11}\itemref{item:af0p-h11/pu0ug7}.
      Let $\sF \in \Dqc(\sX)$ be a nonzero quasi-coherent complex.
      Replacing $\sF$ by a shift if necessary, we may assume that $\pi_0(\sF) \ne 0$.
      Then the claim follows immediately from the derived resolution property.
    \end{proof}

    \begin{prop}\label{prop:dsyg9o1}
      % \adeel{NO LONGER USED}
      Let $f : \sX \to \sY$ be a quasi-affine (resp. quasi-projective) morphism of qcqs \dA stacks.
      If $\sY$ is perfect (resp. and has the derived resolution property), then $\sX$ is perfect.
    \end{prop}
    \begin{proof}
      % Since $\sY$ is perfect, the perfect complex $\sO_\sY$ is a compact object of $\Dqc(\sY)$.
      Since $f$ is universally of finite cohomological dimension (\examref{exam:a-psdf8}), it follows from \remref{rem:afp80h1} that $\L f^*$ sends the compact object $\sO_\sY \in \Dqc(\sY)$ to the compact object $\sO_\sX \in \Dqc(\sX)$.
      % Then by \lemref{lem:a-shg1123} it follows that the compact objects of $\Dqc(\sX)$ coincide with the perfect complexes.
      It is therefore enough to show that $\bL f^* : \Dqc(\sY) \to \Dqc(\sX)$ generates under colimits.
      In the quasi-affine case this is clear.
      In the quasi-projective case we may reduce (when $\sY$ has the derived resolution property) to the case where $\sX$ is a projective bundle over $\sY$, which follows from \cite[Thm.~3.3]{KhanKblow}.
      Compare \cite[A.9.1(a)]{KhanRavi}.
    \end{proof}

    We now move to the formulation of \thmref{thm:perfect scallop}.
    We will require some preliminary definitions.

    \begin{defn}\label{defn:0pg10u70g}
      We say that a \dA stack $\sX$ is \emph{absolutely perfect} if it is perfect and moreover the canonical functor
      $$\Ind(\Dperf(\sX~\mrm{on}~Z)) \to \Dqc(\sX~\mrm{on}~Z)$$
      is an equivalence for every cocompact closed subset $Z \sub \abs{\sX}$.
      Here $\Dqc(\sX~\mrm{on}~Z)$ is the full subcategory of $\Dqc(\sX)$ spanned by quasi-coherent complexes $\sF$ supported on $Z$ (i.e., $j^*(\sF) = 0$ for $j : \sX \setminus Z \hook \sX$ the complementary open immersion), and similarly for $\Dperf(\sX~\mrm{on}~Z)$.
    \end{defn}

    In many cases, perfection automatically implies absolute perfection:

    \begin{lem}\label{lem:sac9yg}
      Let $\sX$ be a qcqs \dA stack.
      If $\sX$ admits the resolution property, then it is perfect if and only if it is absolutely perfect.
      % \begin{thmlist}
      %   \item
      %   $\sX$ is perfect.
      %   \item
      %   For every cocompact closed subset $Z \sub \abs{\sX}$, the canonical functor
      %   $$\Ind(\Dperf(\sX~\mrm{on}~Z)) \to \Dqc(\sX~\mrm{on}~Z)$$
      %   is an equivalence.
      % \end{thmlist}
    \end{lem}
    \begin{proof}
      Let $Z \sub \abs{\sX}$ be a cocompact closed subset.
      By \cite[Prop.~8.2]{RydhApproxSheaves} there exists an ideal $\sI \sub \sO_{\sX_\cl} = \pi_0(\sO_\sX)$ of finite type whose vanishing locus is $Z$.
      Since $\sX$ admits the resolution property, there exists a surjection $\sE \twoheadrightarrow \sI$ where $\sE$ is a finite locally free sheaf on $\sX$.
      Let $\sK \in \Dqc(\sX)$ denote the Koszul complex of the resulting cosection $\sigma : \sE \twoheadrightarrow \sI \sub \sO_\sX$, i.e., the structure sheaf of its derived zero locus.
      Note that $\sK$ is a perfect complex supported on $Z$ by construction, and hence gives rise to a functor
      \[ \sK \otimes^\bL - : \Dqc(\sX) \to \Dqc(\sX~\mrm{on}~Z) \]
      which sends perfect complexes on $\sX$ to perfect complexes supported on $Z$.
      It will suffice to show that this functor generates under colimits, or equivalently that its right adjoint $\uRHom(\sK, -) \simeq \sK^\vee \otimes -$ is conservative.
      Since $\sX$ is quasi-compact, we may choose a smooth surjection $u : X \twoheadrightarrow \sX$ with $X$ affine.
      Then $u^* : \Dqc(\sX) \to \Dqc(X)$ is conservative, symmetric monoidal, and sends $\sK$ to the Koszul complex of $f^*(\sigma) : f^*(\sE) \to \sO_X$.
      Thus we may assume that $\sX = X$ is affine, in which case it is enough to show that $\sK$ generates $\Dqc(X~\mrm{on}~Z)$ under colimits, which is proven e.g. in \cite[Lem.~4.10]{ToenAzumaya}.
    \end{proof}

    Recall that an \emph{étale neighbourhood} of a cocompact closed subset $Z \sub \abs{\sX}$ is an étale morphism $f : \sX' \to \sX$ which restricts to an isomorphism $f^{-1}(Z)_\red \to Z_\red$.
    The following, taken from \cite[\S 2.3]{KhanRavi}, is a more flexible variant of \cite[Def.~2.5.3.1]{LurieSAG}.

    \begin{defn}\label{defn:scallop}
      Let $\sX$ be a quasi-compact derived Artin stack.
      A \emph{scallop decomposition} $(\sU_i, \sV_i, u_i)_i$ of $\sX$ is a finite filtration by quasi-compact open substacks
      \[
        \initial = \sU_0
        \hook \sU_1
        \hook \cdots
        \hook \sU_n = \sX,
      \]
      together with étale neighbourhoods $u_i : \sV_i \to \sU_i$ of $\sU_i \setminus \sU_{i-1}$.
    \end{defn}

    We can now state the criterion, which is abstracted out of the proof of \cite[Thm.~2.24]{KhanRavi}:

    \begin{thm}\label{thm:perfect scallop}
      Let $\sX$ be a qcqs \dA stack.
      Suppose given a scallop decomposition $(\sU_i, \sV_i, u_i)_i$ of $\sX$.
      If each $\sV_i$ is absolutely perfect (resp. each $\sO_{\sV_i}$ is compact), then $\sX$ is perfect (resp. $\sO_\sX$ is compact).
    \end{thm}

    \begin{proof}%[Proof of \thmref{thm:perfect scallop}]
      One argues exactly as in the proof of \cite[Thm.~2.24]{KhanRavi} (which itself is adapted from arguments of \cite{ThomasonTrobaugh} and \cite{BondalVdB}).
      Namely, one first shows as in \cite[Lem.~2.27]{KhanRavi} that $\sX$ the compact objects of $\Dqc(\sX)$ are the perfect complexes.
      Then one proceeds by an induction on the scallop decomposition to glue together a set of perfect complexes generating $\Dqc(\sX)$, using descent by étale neighbourhoods for $\Dqc(-)$.
    \end{proof}

    \begin{rem}
      In fact a small modification of the proof shows that $\sX$ is even absolutely perfect, but we will not need this.
    \end{rem}

    Finally we apply these results to prove Theorems~\ref{thm:nice} and \ref{thm:qfund}.

    \begin{proof}[Proof of \thmref{thm:qfund}]
      By Examples~\ref{exam:i9ytfgsa91} and \ref{exam:0a7s7h01}, $BG$ has the derived resolution property.
      By \examref{exam:apsdufgp01} it is also universally of finite cohomological dimension (\remref{rem:afp80h1}).
      By \propref{prop:ou0pbasp0b1} and \examref{exam:a-psdf8}, the same then holds for $\sX = [X/G]$.
      Thus $\sX$ is perfect by \corref{cor:as0fg1}.
      % By \propref{prop:ou0pbasp0b1} it also has the derived resolution property, hence by \lemref{lem:as0fg1} is absolutely perfect.
    \end{proof}

    \begin{proof}[Proof of \thmref{thm:nice}]
      Suppose that $\sX$ has nice stabilizers.
      By the local structure theorem of \cite{AlperHallHalpernLeistnerRydh} (cf. \cite[Thm.~2.12(ii)]{KhanRavi}), $\sX$ admits a scallop decomposition $(\sU_i, \sV_i, u_i)_i$ where each $\sV_i$ is a quotient of a quasi-affine derived scheme by an action of a linearly reductive embeddable group scheme over an affine base.
      By \thmref{thm:qfund}, each $\sV_i$ is perfect.
      Since it also admits the resolution property (\propref{prop:ou0pbasp0b1} and \examref{exam:i9ytfgsa91}), it is moreover absolutely perfect by \lemref{lem:sac9yg}.
      Now the claim follows from \thmref{thm:perfect scallop}.
    \end{proof}

    % See \cite[Thm.~2.24, A.9.1(a)]{KhanRavi}.

    % \begin{rem}
    %   The above definition of nicely scalloped stacks coincides with that of \cite[\S 2]{KhanRavi}, and the definition of linearly scalloped stacks is slightly more general than that of \cite[App.~A]{KhanRavi}.
    % \end{rem}

    % By Examples~\ref{exam:apsdufgp01} and \ref{exam:a-psdf8}, any quotient of a qcqs derived algebraic space by an action of a linearly reductive group scheme is of finite cohomological dimension.
    % In particular, any linearly fundamental derived stack is of finite cohomological dimension.

  \subsection{Continuity}

    The following is a generalization of Thomason's \cite[Prop.~3.20]{ThomasonTrobaugh} from the case of qcqs schemes.

    \begin{thm}\label{thm:contin}
      Let $(\sX_\alpha)_\alpha$ be a cofiltered system of \dA stacks with affine transition morphisms.
      Denote by $\sX$ the limit and by $p_\alpha : \sX \to \sX_\alpha$ the projections.
      Then we have:
      \begin{thmlist}
        \item
        The inverse image functors $p_\alpha^*$ induce a canonical equivalence of stable presentable \inftyCats
        \[
          \colim_\alpha \Dqc(\sX_\alpha)
          \to \Dqc(\sX),
        \]
        where the colimit is taken in the (very large) \inftyCat of presentable \inftyCats and colimit-preserving functors.

        \item\label{item:contin/perf}
        If $\sX$ and each $\sX_\alpha$ are perfect, then $p_\alpha^*$ also induce an equivalence
        \[
          \colim_\alpha \Dperf(\sX_\alpha)
          \to \Dperf(\sX),
        \]
        where the colimit is taken in the (large) \inftyCat of small \inftyCats.
      \end{thmlist}
    \end{thm}
    \begin{proof}
      When $\sX$ and $\sX_\alpha$ are perfect, the second equivalence is obtained by passage to compact objects from the first (see \cite[Prop.~5.5.7.8, Cor. 4.4.5.21]{LurieHTT}).

      The first statement is equivalent by \cite[Cor.~5.5.3.4, Thm.~5.5.3.18]{LurieHTT} to the assertion that the functor
      \begin{equation}\label{eq:afsd0ph1}
        \Dqc(\sX) \to \lim_\alpha \Dqc(\sX_\alpha),
      \end{equation}
      induced by the direct image functors $p_{\alpha,*}$, is an equivalence (where the limit is taken in the \inftyCat of large \inftyCats).

      Without loss of generality we may assume that the indexing category admits a terminal object $0$.
      Let $T_0$ be an affine derived scheme and $t_{0} : T_0 \to \sX_{0}$ a smooth morphism, and consider the analogous functor
      \[ \Dqc(T) \to \lim_{\alpha\ge\alpha_0} \Dqc(T_\alpha) \]
      where $T_\alpha = T_{0} \fibprod_{\sX_{\alpha_0}} \sX_\alpha$ and $T = T_{0} \fibprod_{\sX_{\alpha_0}} \sX$.
      Since the transition maps $\sX_\alpha\to\sX_{\alpha_0}$ are affine, $T$ and each $T_\alpha$ is affine.
      Therefore by \cite[Cor.~4.5.1.5]{LurieSAG} this functor is an equivalence.

      Since the transition maps are affine, the base change formula (\remref{rem:apdsfb}) implies that the diagram
      \[\begin{tikzcd}
        \Dqc(\sX) \ar{r}\ar{d}
        & \cdots \ar{r}
        & \Dqc(\sX_\alpha) \ar{r}\ar{d}
        & \Dqc(\sX_\beta) \ar{r}\ar{d}
        & \cdots \ar{r}
        & \Dqc(\sX_0) \ar{d}
        \\
        \Dqc(T) \ar{r}
        & \cdots \ar{r}
        & \Dqc(T_\alpha) \ar{r}
        & \Dqc(T_\beta) \ar{r}
        & \cdots \ar{r}
        & \Dqc(T_0)
      \end{tikzcd}\]
      commutes.
      Therefore, the functor \eqref{eq:afsd0ph1} is the limit over the analogous functors for every pair $(T_0,t_0)$, which are all equivalences by above.
    \end{proof}

    \begin{cor}\label{cor:continlocfr}
      Let $\sX$ be a perfect \dA stack.
      Suppose that $\sX$ is the limit of a cofiltered system $(\sX_\alpha)_\alpha$ of perfect \dA stacks with affine transition morphisms.
      Then the equivalence of \thmref{thm:contin}\itemref{item:contin/perf} restricts to an equivalence
      \[
        \colim_\alpha \Dperf(\sX_\alpha)_{\mrm{locfr}}
        \to \Dperf(\sX)_{\mrm{locfr}}
      \]
      where $(-)_{\mrm{locfr}}$ denotes the full subcategory of locally free sheaves of finite rank.
    \end{cor}
    \begin{proof}
      Since the property is smooth-local, we may reduce as in the proof of \thmref{thm:contin} to the case where $\sX=X$ and $\sX_\alpha=X_\alpha$ are affine.
      Choose an index $0$ and let $\sF_0 \in \Dperf(X_0)$, $\sF_\alpha = \sF_0|_{X_\alpha} \in \Dperf(X_\alpha)$ for every $\alpha$, and $\sF = \sF_0|_{X} \in \Dperf(X)$.
      Since inverse image preserves finite locally free sheaves, it is enough to show that if $\sF$ is finite locally free, then so is $\sF_\alpha$ for some $\alpha$.
      For every $\alpha$ let $U_\alpha \sub X_\alpha$ denote the open locus where $\sF_\alpha$ is finite locally free (see \cite[Prop.~2.9.3.2]{LurieSAG}).
      By construction we have $U_\alpha = U_0 \fibprod_{X_0} X_\alpha$ for all $\alpha$ and $U = U_0 \fibprod_{X_0} X$.
      As an open subscheme of $X_0 = \Spec(A_0)$, $U_0$ can be written as the union of principal opens $U_{0,i} = \Spec(A_0[f_i^{-1}])$ for some elements $f_i \in \pi_0(A_0)$.
      By base change, the opens $U_{0,i} \fibprod_{X_0} X = \Spec(A[f_i^{-1}])$ then cover $X = \Spec(A)$, i.e., we can write $1 \in \pi_0(A)$ as a linear combination of the images of $f_i$.
      But then the same must hold in $\pi_0(A_\alpha)$ for some large enough $\alpha$.
      That is, $U_\alpha = X_\alpha$, so that $\sF_\alpha$ is finite locally free.
    \end{proof}

\section{Algebraic K-theory}
\label{sec:K}

  For a commutative ring $R$, Quillen's algebraic K-theory spectrum $\K(R)$ (see \cite{QuillenK}) can be defined succinctly as the homotopy-theoretic group completion \cite{QuillenGroup,May,GepnerGrothNikolaus} of the groupoid of finitely generated projective $R$-modules, regarded as an $\Einfty$-monoid under direct sum.
  This can be viewed either as an $\Einfty$-group or a connective spectrum.
  This description is also valid for a \scr $R$ (see \cite[Lect.~19, Thm.~5]{LurieKLect}).

  For a scheme $X$, doing the same construction with the groupoid of finite locally free $\sO_X$-modules (or vector bundles over $X$ of finite rank) gives rise to Quillen's original construction of $\K(X)$.
  The Thomason--Trobaugh definition \cite{ThomasonTrobaugh} is more sophisticated: it replaces vector bundles by perfect complexes and requires Waldhausen's $S_\bullet$-construction instead group completion.
  The advantage of Thomason--Trobaugh K-theory is its excellent behaviour over arbitrary (qcqs) schemes: for instance, there are localization and Mayer--Vietoris long exact sequences.
  
  Thomason--Trobaugh K-theory and Quillen K-theory agree in the presence of the resolution property (\ssecref{ssec:res}), which almost always holds for schemes (e.g. any quasi-projective scheme admits the resolution property).
  For stacks however the resolution property essentially amounts to being a global quotient \cite{TotaroRes,Gross}.
  The Thomason--Trobaugh construction becomes therefore even more crucial in the world of stacks.

  \subsection{Definitions}

    Let $\sA$ be a stable \inftyCat.
    The Waldhausen $S_\bullet$-construction makes sense in this setting and can be used to produce the K-theory space of $\sA$.
    Iterating it gives rise to an $\Einfty$-group structure on this space, and hence to a connective spectrum $\K(\sA)$.
    See \cite[Rem.~1.2.2.5]{LurieHA}, \cite[Sect.~7]{BlumbergGepnerTabuada}, and \cite[Sect.~10]{BarwickWald}.
    Using a generalization of the Bass construction (see \cite[Sect.~4]{CisinskiKhan}), one can also produce a Bass--Thomason--Trobaugh K-theory spectrum $\KB(\sA)$ whose connective cover $\KB(\sA)_{\ge 0}$ is $\K(\sA)$.
    Its homotopy groups are denoted
    \[ \K_i(\sA) = \pi_i(\KB(\sA)), \qquad i\in\bZ, \]
    so that $\K_i(\sA) \simeq \pi_i(\K(\sA))$ for $i\ge 0$.
    Any exact functor of stable \inftyCats $\sA \to \sB$ induces maps of spectra $\K(\sA) \to \K(\sB)$ and $\KB(\sA) \to \KB(\sB)$.

    \begin{thm}\label{thm:abstract localization}
      Let $j^* : \sA \to \sB$ be a functor between compactly generated stable \inftyCats with fully faithful right adjoint $j_*$.
      Suppose that $j^*$ is compact (equivalently, preserves compact objects) and its kernel $\sA_0$ is compactly generated.
      Then we have:

      \begin{thmlist}
        \item\emph{Proto-localization.}
        There is a fibre sequence of connective spectra
        \[ \K(\sA_0^\omega) \to \K(\sA^\omega) \to \K(\sB^\omega). \]

        \item\emph{Localization.}
        There is an exact triangle of spectra
        \[ \KB(\sA_0^\omega) \to \KB(\sA^\omega) \to \KB(\sB^\omega). \]
      \end{thmlist}
      In particular, there is a long exact sequence
      \[
        \cdots
        \xrightarrow{\partial} \K_i(\sA_0^\omega)
        \to \K_i(\sA^\omega)
        \to \K_i(\sB^\omega)
        \xrightarrow{\partial} \K_{i-1}(\sA_0^\omega)
        \to \cdots.
      \]
    \end{thm}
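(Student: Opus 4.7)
The plan is to reduce both statements to classical K-theoretic machinery (Waldhausen's fibration theorem, Thomason's cofinality theorem, and the Bass delooping construction) applied to the sequence on compact objects.

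First, since $j^*$ preserves compact objects, it restricts to an exact functor $j^* : \sA^\omega \to \sB^\omega$ of small stable \inftyCats. Because $\sA_0$ is compactly generated and its embedding into $\sA$ (the left adjoint to the corestriction of $j^*$) preserves compact objects, we have $\sA_0^\omega = \Ker(j^*|_{\sA^\omega})$ as thick subcategories of $\sA^\omega$. By the Neeman--Thomason localization theorem for compactly generated stable \inftyCats, the induced functor on the Verdier quotient
\[ \bar{j}^* : \sA^\omega/\sA_0^\omega \to \sB^\omega \]
is fully faithful with dense essential image; equivalently, its idempotent completion is an equivalence.

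For part~(i), I would first apply the Waldhausen fibration theorem, in the form of the universal property of connective K-theory as a localizing invariant of small stable \inftyCats (see \cite{BlumbergGepnerTabuada,BarwickWald}), to the Verdier sequence $\sA_0^\omega \to \sA^\omega \to \sA^\omega/\sA_0^\omega$. This yields a fibre sequence of connective spectra
\[ \K(\sA_0^\omega) \to \K(\sA^\omega) \to \K(\sA^\omega/\sA_0^\omega). \]
I would then invoke Thomason's cofinality theorem applied to the dense inclusion $\bar{j}^*$: the induced map $\K(\sA^\omega/\sA_0^\omega) \to \K(\sB^\omega)$ is an equivalence on $\pi_i$ for $i\ge 1$, and on $\pi_0$ is an inclusion of a union of connected components. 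Splicing these identifies $\K(\sA_0^\omega) \to \K(\sA^\omega) \to \K(\sB^\omega)$ as a fibre sequence of connective spectra, which is the proto-localization statement.

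For part~(ii), I would pass to nonconnective K-theory by applying the Bass--Thomason--Trobaugh construction of \cite[Sect.~4]{CisinskiKhan} termwise to the fibre sequence of~(i). The iterative Bass delooping machinery converts any fibre sequence of connective spectra arising from a localization of compactly generated stable \inftyCats into a genuine exact triangle of nonconnective spectra, precisely by absorbing the failure of surjectivity of $\K_0(\sA^\omega) \to \K_0(\sB^\omega)$ into the negative K-groups $\K_{-1}(\sA_0^\omega)$, and iterating; the long exact sequence then follows formally from the resulting exact triangle.

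The main obstacle is the cofinality step in~(i): one must verify that, despite $\bar{j}^*$ failing to be essentially surjective on the nose, the map $\K(\sA^\omega/\sA_0^\omega) \to \K(\sB^\omega)$ nevertheless combines with the Waldhausen fibration to exhibit $\K(\sA_0^\omega)$ as the connective fibre of $\K(\sA^\omega) \to \K(\sB^\omega)$. This is exactly what Thomason's cofinality theorem delivers in the form of a ``$\pi_0$-only'' defect, and its compatibility with the Waldhausen fibration is the key technical point. Once (i) is in hand, the passage to~(ii) is a by-now-formal application of the Bass construction.
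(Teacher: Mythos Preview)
Your proposal is correct and essentially unpacks what the paper's proof merely cites: the paper invokes \cite[Prop.~10.20]{BarwickWald} for (i) and \cite[Thm.~4.5.7]{CisinskiKhan} for (ii), and your Neeman--Thomason plus Waldhausen fibration plus Thomason cofinality argument is precisely the standard route to the former, while your appeal to the Bass construction is literally the latter. One terminological slip: connective K-theory is not a \emph{localizing} invariant in the sense of \cite{BlumbergGepnerTabuada} (that is the nonconnective theory); what you actually need and use is the Waldhausen fibration theorem for Verdier quotients, which does hold for connective $\K$ and yields the fibre sequence $\K(\sA_0^\omega) \to \K(\sA^\omega) \to \K(\sA^\omega/\sA_0^\omega)$ you wrote down.
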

    \begin{proof}
      See \cite[Prop.~10.20]{BarwickWald} for the first statement.
      The second follows from the first and \cite[Thm.~4.5.7]{CisinskiKhan}.
    \end{proof}

    \begin{cor}[Additivity]\label{cor:additivity}
      For any semi-orthogonal decomposition of $\sA$ into stable subcategories, the spectra $\K(\sA)$ and $\KB(\sA)$ both split into direct sums.
    \end{cor}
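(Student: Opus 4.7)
The plan is to reduce to a two-term semi-orthogonal decomposition $\sA = \langle \sA_0, \sB \rangle$ by induction on the length, then invoke the localization theorem \thmref{thm:abstract localization} to obtain the relevant fibre/cofibre sequence, and finally split that sequence using a section coming from the inclusion $\sB \hookrightarrow \sA$.

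First I would unpack the structure of the decomposition: the inclusion $j : \sB \hookrightarrow \sA$ admits a left adjoint $q : \sA \to \sB$ whose kernel is precisely $\sA_0$, or equivalently, $q$ has a fully faithful right adjoint $j$. To fit into the framework of \thmref{thm:abstract localization}, I would pass to ind-completions, obtaining a functor $\Ind(q) : \Ind(\sA) \to \Ind(\sB)$ between compactly generated stable \inftyCats. Here $\Ind(q)$ preserves compact objects (as the left Kan extension of an exact functor between small stable \inftyCats), the right adjoint $\Ind(j)$ is fully faithful (since $j$ is), and the kernel is identified with $\Ind(\sA_0)$, which is compactly generated. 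Applying the theorem then produces a fibre sequence of connective spectra
\[ \K(\sA_0) \to \K(\sA) \to \K(\sB) \]
and an exact triangle of spectra
\[ \KB(\sA_0) \to \KB(\sA) \to \KB(\sB). \]

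For the splitting, the key point is that $q \circ j \simeq \id_\sB$, so functoriality of K-theory yields a map $j_* : \K(\sB) \to \K(\sA)$ retracting the quotient map, and similarly for $\KB$. A (co)fibre sequence of spectra equipped with such a retraction splits as a direct sum, because in the stable \inftyCat of spectra finite products and coproducts coincide. This gives
\[ \K(\sA) \simeq \K(\sA_0) \oplus \K(\sB), \qquad \KB(\sA) \simeq \KB(\sA_0) \oplus \KB(\sB), \]
and iteration handles semi-orthogonal decompositions of arbitrary finite length.

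The main obstacle is the bookkeeping in the passage to ind-completions: one has to verify that the natural functors $\Ind(\sA_0) \to \Ind(\sA)$ and $\Ind(\sB) \to \Ind(\sA)$ realize $\Ind(\sA_0)$ as the kernel of $\Ind(q)$ and $\Ind(j)$ as a fully faithful right adjoint, so that \thmref{thm:abstract localization} genuinely applies. Once this is in place, everything else is essentially formal manipulation of fibre sequences in spectra.
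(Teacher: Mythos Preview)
Your proposal is correct and follows essentially the same approach as the paper: reduce to length two, pass to ind-completions, apply \thmref{thm:abstract localization}, and split the resulting sequence using the map induced by the fully faithful inclusion. The paper's proof is terser (it cites \cite[Lem.~2.8]{KhanKblow} for the reduction and simply says ``applying \thmref{thm:abstract localization} to the ind-completions''), but your added bookkeeping about why $\Ind(\sA_0)$ is the kernel and $\Ind(j)$ remains fully faithful is exactly what one needs to justify that step.
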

    \begin{proof}
      As in \cite[Lem.~2.8]{KhanKblow}, we may reduce to the case of a semi-orthogonal decomposition of length $2$.
      Applying \thmref{thm:abstract localization} to the ind-completions, we get a fibre sequence (resp. exact triangle), which is split by the map $j_* : \K(\sB) \to \K(\sA)$ (resp. $j_* : \KB(\sB) \to \KB(\sA)$).
    \end{proof}

    \begin{rem}\label{rem:Efimov}
      If the kernel $\sA_0$ is not compactly generated, one can still identify the fibre in \thmref{thm:abstract localization} as the Efimov K-theory of $\sA_0$ (see \cite{HoyoisEfimov}).
    \end{rem}

    \begin{thm}\label{thm:abstract excision}
      Suppose given a commutative square
      \[ \begin{tikzcd}
        \sA \ar{r}{f^*}\ar{d}{p^*}
        & \sB \ar{d}{q^*}
        \\
        \sA' \ar{r}{g^*}
        & \sB'
      \end{tikzcd} \]
      of compactly generated stable \inftyCats and compact colimit-preserving functors.
      If the square is cartesian (in the \inftyCat of large \inftyCats) and $g^*$ has fully faithful right adjoint $g_*$, then the induced square of spectra
      \[ \begin{tikzcd}
        \KB(\sA^\omega) \ar{r}{f^*}\ar{d}{p^*}
        & \KB(\sB^\omega) \ar{d}{q^*}
        \\
        \KB(\sA'^\omega) \ar{r}{g^*}
        & \KB(\sB'^\omega)
      \end{tikzcd} \]
      is cartesian.
    \end{thm}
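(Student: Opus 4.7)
The approach is to reduce the statement to the localization theorem (Theorem~\ref{thm:abstract localization}) by exhibiting both horizontal rows as Verdier localization sequences with equivalent kernels, then applying the localization theorem and observing that the cofibers match.

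First, I would check that $f^*$ also has fully faithful right adjoint $f_*$. Writing $\sA \simeq \sA' \fibprod_{\sB'} \sB$ using the cartesian condition, the right adjoint $f_*$ is given by $b \mapsto (g_* q^*(b), b)$ with the tautological comparison in $\sB'$. The counit $f^* f_* \to \id$ is then the identity, while the unit $a \to f_* f^*(a)$ has identity $\sB$-component and $\sA'$-component equal to the unit $a' \to g_* g^*(a')$ of the $(g^*, g_*)$ adjunction, which is an equivalence because $g_*$ is fully faithful.

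Second, I would identify the kernels. Setting $\sA_0 := \Ker(f^*)$ and $\sA_0' := \Ker(g^*)$, the fibre product description shows that the objects of $\sA_0$ are precisely those of the form $(a', 0)$ with $a' \in \sA_0'$. Hence $p^*$ restricts to an equivalence $\sA_0 \xrightarrow{\simeq} \sA_0'$, and since $p^*$ preserves compact objects by hypothesis, this induces an equivalence between the full subcategories of compact objects, and hence a $\KB$-equivalence.

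Third, I would verify that $\sA_0'$ is compactly generated, so that Theorem~\ref{thm:abstract localization}(ii) applies to the bottom (and then also the top) row. Because $g^*$ preserves colimits and compact objects, its right adjoint $g_*$ automatically also preserves colimits, so $g^*$ is a smashing Bousfield localization of a compactly generated stable \inftyCat. By a standard theorem of Neeman, $\sA_0'$ is then generated as a localizing subcategory by the compact objects of $\sA'$ that are sent to zero in $\sB'$, and is in particular compactly generated.

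Combining these ingredients, Theorem~\ref{thm:abstract localization}(ii) furnishes exact triangles of Bass--Thomason--Trobaugh K-theory spectra for both rows, fitting into a natural map of triangles induced by $(p^*, q^*)$. Since the induced map on the leftmost terms $\KB(\sA_0^\omega) \to \KB(\sA_0'^\omega)$ is an equivalence of spectra by the second step, the right-hand square is homotopy cartesian, which is the claim. The main technical hurdle is the third step: verifying compact generation of $\sA_0'$, for which the key observation is that compactness of $g^*$ forces its right adjoint to preserve all colimits, placing us in the smashing setting where Neeman's theorem applies.
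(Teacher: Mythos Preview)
Your overall strategy matches the paper's: show $f_*$ is fully faithful, identify the horizontal fibres via the equivalence of kernels $\sA_0 \simeq \sA'_0$, and conclude that the square is cartesian because the induced map on fibres is an equivalence. Steps 1, 2, and 4 are essentially what the paper does (in more compressed form).

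The genuine gap is your third step. You claim that because $g^*$ is a smashing localization of a compactly generated stable \inftyCat, a ``standard theorem of Neeman'' ensures that the kernel $\sA'_0$ is generated by the compact objects of $\sA'$ it contains, and hence is compactly generated. This is precisely the \emph{generalized smashing conjecture} (or telescope conjecture), and it is false in general: Keller constructed smashing localizations of $\D(R)$, for suitable non-noetherian rings $R$, whose acyclics are not generated by compact objects. Neeman's theorem goes in the other direction (compactly generated localizing subcategories give smashing localizations) and proves the converse only for $\D(R)$ with $R$ noetherian. Nothing in the hypotheses of \thmref{thm:abstract excision} rules out such examples for $g^*$.

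The paper confronts exactly this issue: it observes that \emph{if} the kernels happen to be compactly generated then \thmref{thm:abstract localization} applies directly, but in general one must invoke \remref{rem:Efimov} and identify the fibres of $\KB(\sA^\omega) \to \KB(\sB^\omega)$ and $\KB(\sA'^\omega) \to \KB(\sB'^\omega)$ with the Efimov (continuous) K-theory of $\sA_0$ and $\sA'_0$. Since $p^*$ induces an equivalence $\sA_0 \simeq \sA'_0$ of presentable stable \inftyCats, the Efimov K-theories agree and the conclusion follows. So to repair your argument, replace step 3 by this appeal to Efimov K-theory (or restrict the statement to situations where compact generation of the kernel is known).
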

    \begin{proof}
      The cartesianness of the square implies that $f_*$ is also fully faithful.
      Since the \inftyCat of spectra is stable, it will suffice to show that the horizontal fibres are isomorphic.
      If the kernels of $f^*$ and $g^*$ are compactly generated, this follows immediately from \thmref{thm:abstract localization}.
      By \remref{rem:Efimov}, this holds even otherwise.
      (See \cite[Cor.~13]{HoyoisEfimov} and, for another proof, \cite[Thm.~18]{Tamme}.)
    \end{proof}

    \begin{thm}[Continuity]\label{thm:abstract contin}
      Let $\sA$ be a filtered colimit of stable \inftyCats $\sA_\alpha$.
      Then the canonical maps
      \begin{align*}
        \colim_\alpha \K(\sA_\alpha) \to \K(\sA),\\
        \colim_\alpha \KB(\sA_\alpha) \to \KB(\sA)
      \end{align*}
      are invertible.
    \end{thm}
    \begin{proof}
      For $\K(-)$ this follows by inspection of the Waldhausen $S_\bullet$-construction (see e.g. \cite[Prop.~7.10]{BlumbergGepnerTabuada}).
      For $\KB(-)$, note that it is enough to show the map is invertible after taking the $n$-connective cover $\tau_{\ge n}$ for every $n$.
      The induced map is, by construction of $\KB(-)$ and because $\tau_{\ge n}$ commutes with colimits, a retract of the map
      \[
        \colim_\alpha \K(\sA_\alpha \otimes \sC^{\otimes n}) \to \K(\sA \otimes \sC^{\otimes n})
      \]
      which is invertible by the $\K(-)$ statement.
      Here $\sC$ is the stable \inftyCat of perfect complexes over an analogue of the multiplicative group $\bG_m$ over the sphere spectrum.
      See \cite[Not.~4.2.4]{CisinskiKhan} and compare the proof of \cite[Thm.~4.5.7(ii)]{CisinskiKhan}.
    \end{proof}

    \begin{defn}
      For a \dA stack $\sX$, write
      \[ \K(\sX) = \K(\Dperf(\sX)), \quad \KB(\sX) = \KB(\Dperf(\sX)) \]
      for the Thomason--Trobaugh and Bass--Thomason--Trobaugh K-theory spectra.
    \end{defn}

    From now on we'll mostly restrict our attention to $\KB$, since statements about $\K$ can be recovered simply by passing to connective covers.

  \subsection{Operations}

    The various operations on perfect complexes give rise to operations in K-theory.

    The tensor product on $\Dperf(\sX)$ induces a cup product
    \[ \cup : \KB(\sX) \otimes \KB(\sX) \to \KB(\sX) \]
    which is part of an $\Einfty$-ring structure on $\KB(\sX)$.

    For any morphism $f : \sX \to \sY$, the inverse image functor $f^* : \Dperf(\sY) \to \Dperf(\sX)$ gives rise to a map
    \[ f^* : \KB(\sY) \to \KB(\sX). \]
    Since $f^*$ is symmetric monoidal, this map is multiplicative (a homomorphism of $\Einfty$-ring spectra).

    If $f$ is proper, of finite cohomological dimension, almost of finite presentation, and of finite Tor-amplitude, then there is a Gysin map
    \[ f_* : \KB(\sX) \to \KB(\sY). \]
    By \thmref{thm:uni fin coh dim} this map is $\KB(\sY)$-linear (a homomorphism of $\KB(\sY)$-module spectra).
    In other words:

    \begin{prop}[Projection formula]\label{prop:K proj}
      If $f : \sX \to \sY$ is proper, of finite cohomological dimension, almost of finite presentation, and of finite Tor-amplitude, then we have canonical identifications
      \begin{equation*}
        f_*(x) \cup y
        \simeq f_*(x \cup f^*(y))
      \end{equation*}
      in $\KB(\sY)$, for all $x \in \KB(\sX)$, $y \in \KB(\sY)$.
    \end{prop}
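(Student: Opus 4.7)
The plan is to deduce this from the projection formula at the level of the underlying stable \inftyCats of perfect complexes. Since $f$ is proper, of finite cohomological dimension, almost of finite presentation, and of finite Tor-amplitude, the hypotheses of the proposition immediately preceding \propref{prop:K proj} are satisfied, so $\R f_*$ preserves perfect complexes and restricts to an exact functor $\R f_* : \Dperf(\sX) \to \Dperf(\sY)$. Together with the preservation of perfect complexes under $\L f^*$, this makes $f_*$ and $f^*$ available at the K-theoretic level.

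Next I would invoke the categorical projection formula (\remref{rem:apdsfb}\emph{(iii)}), which under our hypotheses holds by \propref{prop:uni fin coh dim}: for $\sF \in \Dqc(\sX)$ and $\sG \in \Dqc(\sY)$ the natural map
\[
  \R f_*(\sF) \otimes \sG \to \R f_*(\sF \otimes \L f^*(\sG))
\]
is an isomorphism. Restricted to perfect inputs, this exhibits $\R f_* : \Dperf(\sX) \to \Dperf(\sY)$ as a $\Dperf(\sY)$-linear exact functor, where $\Dperf(\sX)$ is viewed as a module over the symmetric monoidal stable \inftyCat $\Dperf(\sY)$ via $\L f^*$.

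Finally, I would apply the Bass--Thomason--Trobaugh K-theory functor $\KB(-)$. Since $\KB$ sends symmetric monoidal stable \inftyCats to $\Einfty$-ring spectra and module categories to module spectra, the $\Dperf(\sY)$-linearity of $\R f_*$ translates into $\KB(\sY)$-linearity of the induced map $f_* : \KB(\sX) \to \KB(\sY)$; likewise, the symmetric monoidal nature of $\L f^*$ ensures that $f^* : \KB(\sY) \to \KB(\sX)$ is a ring map. Unwinding this $\KB(\sY)$-linearity on elements yields exactly the claimed identity $f_*(x) \cup y \simeq f_*(x \cup f^*(y))$.

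The main subtlety — and really the only nontrivial point — is verifying that the passage from ``$\Dperf(\sY)$-linear exact functor'' to ``$\KB(\sY)$-module map'' is correctly formalized, i.e., that the K-theory construction is sufficiently functorial with respect to module structures over symmetric monoidal stable \inftyCats. This is standard (see e.g.\ \cite[Sect.~7]{BlumbergGepnerTabuada}), but it is the step where one must be careful, since the rest of the argument is a direct transport of structure.
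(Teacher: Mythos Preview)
Your proposal is correct and follows essentially the same approach as the paper: the paper simply remarks, in the sentence immediately preceding the proposition, that by \propref{prop:uni fin coh dim} the map $f_*$ is $\KB(\sY)$-linear, and states the projection formula as a restatement of this. Your write-up spells out in more detail the passage from the categorical projection formula to $\KB(\sY)$-linearity via the multiplicative functoriality of $\KB$, but the underlying argument is the same.
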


    We also have (by \thmref{thm:uni fin coh dim}):

    \begin{prop}[Base change formula]\label{prop:K base change}
      Suppose given a homotopy cartesian square of \dA stacks
      \begin{equation*}
        \begin{tikzcd}
          \sX' \ar{r}{g}\ar{d}{p}
            & \sY' \ar{d}{q}
          \\
          \sX \ar{r}{f}
            & \sY.
        \end{tikzcd}
      \end{equation*}
      If $f$ is proper, of finite cohomological dimension, almost of finite presentation, and of finite Tor-amplitude, then we have a canonical homotopy
      \begin{equation*}
        q^*f_* \simeq g_*p^*
      \end{equation*}
      of maps $\KB(\sX) \to \KB(\sY')$.
    \end{prop}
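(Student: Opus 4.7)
The plan is to obtain the desired homotopy by functoriality of $\KB$ applied to a base change equivalence that lives at the level of $\Dperf$. Since $\KB$ is a functor from stable \inftyCats and exact functors to spectra, any natural equivalence of exact endofunctors on $\Dperf$ yields a homotopy between the corresponding maps of $\KB$-spectra. So the whole task reduces to producing a canonical equivalence $q^* f_* \simeq g_* p^*$ as exact functors $\Dperf(\sX) \to \Dperf(\sY')$.

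First, I would verify that all four functors in the square actually send perfect complexes to perfect complexes. The inverse image functors $\L p^*$ and $\L q^*$ always preserve perfectness. For $\R f_*$, the hypotheses on $f$ (proper, almost of finite presentation, finite Tor-amplitude, finite cohomological dimension) are exactly those of the last proposition of Subsect.~\ref{ssec:coh proper}, so $\R f_*$ preserves perfect complexes. For $\R g_*$, I would note that properness, almost finite presentation, and finite Tor-amplitude are all stable under base change, and that finite cohomological dimension of $g$ follows from the universal version of this property for $f$ (which is implicit in the current hypothesis, cf.~\remref{rem:ophunas} together with the context of \propref{prop:uni fin coh dim}); hence $\R g_*$ also preserves perfectness.

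Next, I would invoke \propref{prop:uni fin coh dim}, which gives a canonical base change equivalence $\L q^* \R f_* \xrightarrow{\sim} \R g_* \L p^*$ of functors $\Dqc(\sX) \to \Dqc(\sY')$. By the previous step this natural transformation restricts to a natural equivalence of exact functors $q^* f_* \simeq g_* p^*$ between the corresponding subcategories $\Dperf(\sX) \to \Dperf(\sY')$. Applying $\KB$ then produces the desired homotopy of spectrum maps $\KB(\sX) \to \KB(\sY')$.

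The main obstacle is the bookkeeping around cohomological dimension rather than any genuinely hard step: namely, confirming that ``of finite cohomological dimension'' as stated in the hypothesis is strong enough (i.e., interpreted universally, or automatic from the remaining hypotheses) to apply \propref{prop:uni fin coh dim} both to $f$ and to the base change $g$. Once the functors are known to restrict to $\Dperf$, the rest is essentially formal: the $\Dqc$-level base change isomorphism, pulled back along the inclusion $\Dperf \hookrightarrow \Dqc$ on the relevant domains, is already the equivalence of exact functors needed, and functoriality of $\KB$ does the remaining work.
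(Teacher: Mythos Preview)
Your proposal is correct and matches the paper's approach: the paper's entire proof is the parenthetical ``(by \propref{prop:uni fin coh dim})'' preceding the statement, and you have spelled out exactly the standard reduction that this citation is meant to abbreviate. Your flagged concern about ``finite cohomological dimension'' versus ``universally of finite cohomological dimension'' is also well taken---the paper is tacitly reading the hypothesis in the universal sense needed for \propref{prop:uni fin coh dim}, and you were right to note this.
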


    If $Z \sub \abs{\sX}$ is a cocompact closed subset, then we write
    $$\KB(\sX~\mrm{on}~Z) := \KB(\Dperf(\sX~\mrm{on}~Z))$$
    for the Bass--Thomason--Trobaugh K-theory spectrum of $\Dperf(\sX~\mrm{on}~Z)$, defined as in \defnref{defn:0pg10u70g}.
    
    \begin{thm}[Localization]\label{thm:K localization}
      If $\sX$ is absolutely perfect, then for every cocompact closed subset $Z\sub\abs{\sX}$ there is a canonical exact triangle of spectra
      \[ \KB(\sX~\mrm{on}~Z) \to \KB(\sX) \xrightarrow{j^*} \KB(\sX\setminus Z). \]
    \end{thm}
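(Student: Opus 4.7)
The plan is to apply the abstract localization theorem (\thmref{thm:abstract localization}) to the inverse image functor
\[ j^* : \Dqc(\sX) \to \Dqc(\sX\setminus Z), \]
where $j : \sX\setminus Z \hook \sX$ is the open immersion complementary to $Z$. Once the hypotheses are checked, the resulting exact triangle will be identified with the desired one via \propref{prop:Thomason+finite coh}.

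The first step is to verify that $\sX\setminus Z$ is itself perfect. Finite cohomological dimension is inherited from $\sX$ via the fact that $j$ is a quasi-compact open immersion (hence representable and of finite cohomological dimension), so that pushing forward along $\sX\setminus Z \to \sX \to \Spec(\bZ)$ has bounded amplitude on $\Qcoh(\sX\setminus Z)$. For the Thomason condition, compact generation of $\Dqc(\sX\setminus Z)$ follows from compact generation of $\Dqc(\sX)$ via the Verdier quotient description. For condition~\ref{item:aofdsn/supp}, any cocompact closed subset $W \sub \abs{\sX\setminus Z}$ can be enlarged to the cocompact closed subset $W \cup Z \sub \abs{\sX}$, and a compact object of $\Dqc(\sX)$ supported exactly on $W \cup Z$ restricts to a compact object of $\Dqc(\sX\setminus Z)$ supported exactly on $W$.

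Next I would verify the hypotheses of \thmref{thm:abstract localization}. Both $\Dqc(\sX)$ and $\Dqc(\sX\setminus Z)$ are compactly generated since both stacks are perfect (\propref{prop:Thomason+finite coh}); the kernel of $j^*$ is $\Dqc(\sX~\mrm{on}~Z)$ by the very definition of support, and this kernel is compactly generated, again by \propref{prop:Thomason+finite coh}; the right adjoint $\R j_*$ is fully faithful because $j$ is an open immersion; and $j^*$ is compact because it sends perfect complexes to perfect complexes (equivalently, $\R j_*$ preserves filtered colimits by \propref{prop:uni fin coh dim}, applied to the representable qcqs morphism $j$ of finite cohomological dimension).

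With the hypotheses in place, \thmref{thm:abstract localization} supplies an exact triangle
\[ \KB\bigl(\Dqc(\sX~\mrm{on}~Z)^\omega\bigr) \to \KB\bigl(\Dqc(\sX)^\omega\bigr) \to \KB\bigl(\Dqc(\sX\setminus Z)^\omega\bigr), \]
and invoking \propref{prop:Thomason+finite coh} one last time identifies the three compact subcategories with $\Dperf(\sX~\mrm{on}~Z)$, $\Dperf(\sX)$, and $\Dperf(\sX\setminus Z)$, respectively, yielding the stated triangle. The main obstacle is establishing that $\sX\setminus Z$ itself satisfies the Thomason condition — in particular condition~\ref{item:aofdsn/supp} on witnessing supports — which is precisely the step where the full strength of perfectness of $\sX$ (rather than just compact generation of $\Dqc(\sX)$) is used.
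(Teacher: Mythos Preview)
Your proposal is correct and follows the same approach as the paper's own proof: reduce to \thmref{thm:abstract localization} after using \propref{prop:Thomason+finite coh} to identify compact objects with perfect complexes, having first noted that $\sX\setminus Z$ inherits perfectness from $\sX$. The paper's version is terser (it simply asserts that $\sX\setminus Z$ is perfect and invokes the two cited results), but your added verification of the Thomason condition for $\sX\setminus Z$ and of the hypotheses of \thmref{thm:abstract localization} fills in exactly the details the paper leaves implicit.
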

    \begin{proof}
      Since $\sX$ is perfect, so is $\sX\setminus Z$ (since $j^* : \Dqc(\sX) \to \Dqc(\sX\setminus Z)$ is compact and generates under colimits).
      Hence we have $\Dperf(\sX) \simeq \Dqc(\sX)^\omega$ and similarly for $\sX\setminus Z$, as well as $\Dperf(\sX~\mrm{on}~Z) \simeq \Dqc(\sX~\mrm{on}~Z)^\omega$.
      Thus we may apply \thmref{thm:abstract localization}.
    \end{proof}

  \subsection{Projective bundles and blow-ups}

    Let $\sX$ be a \dA stack.
    For any locally free sheaf $\sE$ on $\sX$ of rank $n+1$, $n\ge 0$, we may consider the associated projective bundle
    $$q : \P_\sX(\sE) \to \sX.$$
    This is the universal derived stack over $\sX$ equipped with a surjection $q^*(\sE) \twoheadrightarrow \sO(1)$ onto an invertible sheaf.
    The morphism $q$ is smooth of relative dimension $n$, proper and schematic (see \cite[Prop.~3.1]{KhanKblow}).

    The following is a generalization of \cite[Thm.~7.3]{ThomasonTrobaugh}, \cite{ThomasonProj}.

    \begin{thm}\label{thm:K projbun}
      The maps
      \[
        \KB(X) \to \KB(\P_\sX(\sE)),
        \quad x \mapsto q^*(x) \cup [\sO(-k)],
      \]
      induce an isomorphism
      \[
        \KB(\P_\sX(\sE)) \simeq \bigoplus_{k=0}^{n} \KB(\sX).
      \]
    \end{thm}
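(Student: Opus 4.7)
The plan is to reduce to \corref{cor:additivity} by producing a semi-orthogonal decomposition of $\Dperf(\P_\sX(\sE))$ into $n+1$ copies of $\Dperf(\sX)$, embedded via the functors $\Phi_k : \Dperf(\sX) \to \Dperf(\P_\sX(\sE))$, $\sF \mapsto q^*(\sF) \otimes \sO(-k)$, for $k = 0, 1, \ldots, n$. Granting such a decomposition, additivity yields
\[
  \KB(\P_\sX(\sE)) \simeq \bigoplus_{k=0}^{n} \KB\bigl(\Phi_k(\Dperf(\sX))\bigr) \simeq \bigoplus_{k=0}^{n} \KB(\sX),
\]
and tracing through the identifications gives exactly the asserted map $x \mapsto q^*(x) \cup [\sO(-k)]$ on the $k$-th summand.

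The essential input is the computation of $\R q_*\sO(m)$ for $m \in \{-n, \ldots, 0\}$: one expects $\R q_*\sO_{\P_\sX(\sE)} \simeq \sO_\sX$ and $\R q_*\sO(-m) \simeq 0$ for $1 \le m \le n$. Since $q$ is schematic, smooth and proper of relative dimension $n$, \propref{prop:uni fin coh dim} ensures compatibility of $\R q_*$ with arbitrary base change. This lets me reduce smooth-locally on $\sX$ to the case $\sX = \Spec(A)$ with $\sE$ trivial, so that $\P_\sX(\sE) = \P^n_A$, and then (by \remref{rem:ophunas}) further to the classical truncation $\pi_0(A)$, where the required vanishings are classical.

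Feeding these vanishings into the projection formula (\remref{rem:apdsfb}) gives, for $\sF, \sG \in \Dperf(\sX)$ and $0 \le k, l \le n$,
\[
  \R\uHom_{\P_\sX(\sE)}\bigl(\Phi_k(\sF), \Phi_l(\sG)\bigr)
  \simeq \R\uHom_\sX\bigl(\sF, \sG \otimes \R q_*\sO(k-l)\bigr).
\]
Taking $k = l$ shows that each $\Phi_k$ is fully faithful; for $k > l$ one has $k - l \in \{1, \ldots, n\}$, so the right-hand side vanishes, giving semi-orthogonality in the ordering $\Phi_n, \Phi_{n-1}, \ldots, \Phi_0$. Generation of $\Dperf(\P_\sX(\sE))$ by the essential images of the $\Phi_k$ is obtained via the Beilinson resolution of the diagonal on $\P_\sX(\sE) \fibprod_\sX \P_\sX(\sE)$: the Koszul complex resolving $\sO_\Delta$ in terms of exterior powers of the tautological quotient $q^*\sE \to \sO(1)$ extends verbatim to the derived setting because $\P_\sX(\sE)$ is smooth schematic of relative dimension $n$ over $\sX$.

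The main obstacle is this last step; full faithfulness and semi-orthogonality are formal consequences of the pushforward computation. However, generation is smooth-local on $\sX$ because perfectness, the tensor product, and $q^*$ all commute with smooth base change, so the check reduces to the affine derived case where the Koszul resolution of the diagonal is a direct computation (essentially as in \cite{ThomasonProj, ThomasonTrobaugh}, adapted to derived rings).
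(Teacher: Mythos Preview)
Your approach is the paper's: establish the Orlov--Thomason semi-orthogonal decomposition of $\Dperf(\P_\sX(\sE))$ (which the paper simply outsources to \cite[Thm.~B]{KhanKblow}) and then invoke \corref{cor:additivity}. Two small slips worth fixing. First, the semi-orthogonality check has the inequality reversed: your own vanishing range is $\R q_*\sO(-m)\simeq 0$ for $1\le m\le n$, so $\R q_*\sO(k-l)$ vanishes when $k-l\in\{-n,\ldots,-1\}$, i.e., for $k<l$, not $k>l$; this is the direction compatible with the ordering $\langle\Phi_n,\ldots,\Phi_0\rangle$. Second, \remref{rem:ophunas} (about cohomological dimension) is not the right tool for reducing the computation of $\R q_*\sO(-m)$ to the classical case; instead just base change along $\Spec(A)\to\Spec(\bZ)$ using \remref{rem:apdsfb}, which lands you immediately in the classical computation over~$\bZ$.
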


    \begin{proof}
      The standard semi-orthogonal decomposition of Orlov--Thomason on $\Dperf(\P_\sX(\sE))$ extends to this setting by \cite[Thm.~B]{KhanKblow}.
      Thus the formula follows from \corref{cor:additivity}.
      See \cite[Cor.~3.6]{KhanKblow}.
    \end{proof}

    For any quasi-smooth closed immersion of \dA stacks $i : \sZ \to \sX$, say of virtual codimension $n$, we may form the derived blow-up $\Bl_\sZ\sX$ as in \cite{KhanRydh}.
    This fits in a commutative square
    \begin{equation*}
      \begin{tikzcd}
        \P_\sZ(\sN_{\sZ/\sX}) \ar{r}{i_D}\ar{d}{q}
        & \Bl_\sZ\sX\ar{d}{p}
        \\
        \sZ\ar{r}{i}
        & \sX
      \end{tikzcd}
    \end{equation*}
    which is cartesian on classical truncations (but not homotopy cartesian unless $n=1$).
    The upper closed immersion $i_D$ is a virtual Cartier divisor, i.e., a quasi-smooth closed immersion of virtual codimension $1$.
    The morphism $q : \bP_\sZ(\sN_{\sZ/\sX}) \to \sZ$ is the projection of the projective bundle associated to the conormal sheaf $\sN_{\sZ/\sX}$.
    The projection $p : \Bl_\sZ\sX \to \sX$ is quasi-smooth of relative virtual dimension $0$, proper, and schematic.
    The following is a generalization of \cite{ThomasonBlow}.

    \begin{thm}\label{thm:K blow}
      The maps $p^* : \KB(\sX) \to \KB(\Bl_\sZ\sX)$ and 
      \[
        \KB(\sZ) \to \KB(\Bl_\sZ\sX),
        \quad x\mapsto i_{D,*}\left(q^*(x) \cup [\sO(-k)]\right),
      \]
      induce an isomorphism
      \[
        \KB(\Bl_\sZ\sX) \simeq \KB(\sX) \oplus \bigoplus_{k=1}^{n-1} \KB(\sZ).
      \]
    \end{thm}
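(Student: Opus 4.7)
The plan is to mimic the strategy of \thmref{thm:K projbun}: reduce the statement to a semi-orthogonal decomposition of $\Dperf(\Bl_\sZ\sX)$ and then invoke additivity (\corref{cor:additivity}). The substantive geometric input must come from a derived enhancement of Thomason's blow-up formula, and this is exactly the content of \cite[Thm.~B]{KhanKblow} (the same reference used to handle the projective bundle case), which for a quasi-smooth closed immersion of virtual codimension $n$ provides a semi-orthogonal decomposition
\[
  \Dperf(\Bl_\sZ\sX)
  = \big\langle \Dperf(\sX),\; \Dperf(\sZ)_1,\; \ldots,\; \Dperf(\sZ)_{n-1} \big\rangle,
\]
where the first component is embedded via $p^*$ and the $k$-th component (for $1 \le k \le n-1$) is embedded via $\sF \mapsto i_{D,*}(q^*(\sF) \otimes \sO(-k))$.

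Granting this input, I would proceed in two short steps. First, apply \corref{cor:additivity} to this semi-orthogonal decomposition, obtaining a splitting
\[
  \KB(\Bl_\sZ\sX) \simeq \KB(\sX) \oplus \bigoplus_{k=1}^{n-1} \KB(\sZ),
\]
where the summands are given by the K-theory spectra of the stable subcategories appearing in the decomposition. Second, identify the splitting maps: the summand $\KB(\sX)$ is the image of $p^* : \KB(\sX) \to \KB(\Bl_\sZ\sX)$ by construction, and the $k$-th copy of $\KB(\sZ)$ is the image of the composite induced by $\sF \mapsto i_{D,*}(q^*(\sF) \otimes \sO(-k))$, which is well-defined on K-theory because $q^*$ preserves perfect complexes (as $q$ is smooth), tensoring with the line bundle $\sO(-k)$ preserves perfect complexes, and $i_{D,*}$ preserves perfect complexes by the last proposition of \ssecref{ssec:coh proper} applied to the quasi-smooth (hence finite Tor-amplitude, by \lemref{lem:qsm fTa}) representable proper morphism $i_D$.

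The step I expect to be the main obstacle is purely bookkeeping: checking that the embedding functors used in \cite{KhanKblow} to define the semi-orthogonal pieces really are the functors $p^*$ and $\sF \mapsto i_{D,*}(q^*(\sF) \otimes \sO(-k))$ written in the theorem statement, and that the exactness hypotheses needed for the functors $i_{D,*}$ and $q^*$ to produce maps on $\KB$ (rather than only on $\K$) are in place — both of which follow from the results of \ssecref{ssec:coh proper} and \lemref{lem:qsm fTa} together with the fact that $q$ is smooth schematic proper and $i_D$ is a quasi-smooth representable closed immersion. Once this identification is made, the theorem is just \corref{cor:additivity} applied to the derived Orlov--Thomason decomposition, exactly parallel to the proof of \thmref{thm:K projbun}.
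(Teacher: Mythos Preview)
Your approach is correct and essentially identical to the paper's: invoke the derived Orlov--Thomason semi-orthogonal decomposition of $\Dperf(\Bl_\sZ\sX)$ from \cite{KhanKblow} and apply \corref{cor:additivity}. One small correction: the relevant result in \cite{KhanKblow} for blow-ups is Thm.~C (not Thm.~B, which is the projective bundle case), and the paper also points to \cite[Cor.~4.4]{KhanKblow} for the K-theory consequence.
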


    \begin{proof}
      The standard semi-orthogonal decomposition of Orlov--Thomason on $\Dperf(\Bl_\sZ\sX)$ extends to this setting by \cite[Thm.~C]{KhanKblow}.
      Thus the formula follows from \corref{cor:additivity}.
      See \cite[Cor.~4.4]{KhanKblow}.
    \end{proof}

    \begin{cor}
      The induced square
      \[ \begin{tikzcd}
        \KB(\sX) \ar{r}{i^*}\ar{d}{p^*}
        & \KB(\sZ) \ar{d}{q^*}
        \\
        \KB(\Bl_\sZ\sX) \ar{r}{i_D^*}
        & \KB(\P_\sZ(\sN_{\sZ/\sX}))
      \end{tikzcd} \]
      is cartesian.
    \end{cor}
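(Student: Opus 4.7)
The plan is to exhibit the square as cartesian by analysing the induced map on vertical cofibres, leveraging the explicit direct-sum decompositions from \thmref{thm:K projbun} and \thmref{thm:K blow}. Both $p^*$ and $q^*$ are split monomorphisms of spectra, with cofibres canonically identified with $\bigoplus_{k=1}^{n-1} \KB(\sZ)$. Since the diagram lives in a stable \inftyCat, it suffices to verify that the map induced by $i_D^*$ between these cofibres is an equivalence.

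First I would compute $i_D^* \circ p^* = q^* \circ i^*$ from the tautological identity $p \circ i_D \simeq i \circ q$; this takes the $p^*\KB(\sX)$ summand of $\KB(\Bl_\sZ\sX)$ into the $q^*\KB(\sZ)$ summand on the right. For the remaining summands, write $\psi_k(a) = i_{D,*}\bigl(q^*(a) \cup [\sO(-k)]\bigr)$ for the blow-up decomposition and $\phi_k(a) = q^*(a) \cup [\sO(-k)]$ for the projective-bundle decomposition. The key input is the self-intersection formula for the quasi-smooth virtual Cartier divisor $i_D$: its conormal line bundle is the restriction of $\sO(-E)$ to $E = \P_\sZ(\sN_{\sZ/\sX})$, which by the standard identification $\sO_E(E) \simeq \sO(-1)$ (carried over to the derived setting via \cite{KhanRydh}) corresponds to the class $[\sO(1)]$ in K-theory. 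Therefore
\[ i_D^*(\psi_k(a)) = q^*(a) \cup [\sO(-k)] \cup \bigl(1 - [\sO(1)]\bigr) = \phi_k(a) - \phi_{k-1}(a). \]

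After projecting to the cofibre of $q^*$ (which kills the $\phi_0$ summand), the induced endomorphism of $\bigoplus_{k=1}^{n-1} \KB(\sZ)$ is the $(n-1) \times (n-1)$ upper-triangular matrix with $+1$ on the diagonal and $-1$ on the superdiagonal, hence a manifest isomorphism. This proves the square is cartesian.

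The main obstacle is the self-intersection formula for $i_D$ in the derived setting; it is essentially formal from the Koszul-type cofibre sequence $\sO_{\Bl_\sZ\sX}(-E) \to \sO_{\Bl_\sZ\sX} \to i_{D,*} \sO_E$ together with the projection formula, but a careful derivation in the quasi-smooth/derived context relies on the framework of \cite{KhanRydh, KhanKblow}. Once this is in hand, the rest of the argument is bookkeeping inside the two semiorthogonal decompositions.
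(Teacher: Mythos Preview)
Your proposal is correct and is essentially the approach the paper has in mind: its proof reads only ``Combine Theorems~\ref{thm:K projbun} and \ref{thm:K blow},'' and your explicit computation of $i_D^*$ on the summands via the self-intersection formula for the virtual Cartier divisor $i_D$ is precisely the natural way to carry out that combination by comparing the cofibres of $p^*$ and $q^*$. You have simply made explicit the triangular matrix calculation that the paper leaves to the reader.
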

    \begin{proof}
      Combine Theorems~\ref{thm:K projbun} and \ref{thm:K blow}.
    \end{proof}

  \subsection{Excising closed subsets}

    \begin{thm}\label{thm:K excision}
      Let $\sX$ and $\sX'$ be perfect \dA stacks.
      Let $f : \sX' \to \sX$ be a morphism and $Z \sub \abs{\sX}$ a cocompact closed subset.
      Suppose one of the following conditions holds:
      \begin{thmlist}
        \item\emph{Formal neighbourhood}.
         The morphism $f$ is representable and is an isomorphism infinitely near $Z$.
         That is, the induced morphism $\sX'^\wedge_{f^{-1}(Z)} \to \sX^\wedge_{Z}$ on formal completions is invertible.

        \item\emph{Étale neighbourhood}.\label{item:K excision etale}
        The morphism $f$ is étale and restricts to an isomorphism $f^{-1}(Z)_\red \to Z_\red$.
      \end{thmlist}
      Then the induced square
      \[ \begin{tikzcd}
        \KB(\sX) \ar{r}\ar{d}{f^*}
          & \KB(\sX\setminus Z) \ar{d}
        \\
        \KB(\sX') \ar{r}
          & \KB(\sX'\setminus f^{-1}(Z))
      \end{tikzcd} \]
      is cartesian.
    \end{thm}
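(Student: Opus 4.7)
The plan is to deduce both cases from the abstract excision theorem (\thmref{thm:abstract excision}) applied to the square of quasi-coherent \inftyCats
\[
\begin{tikzcd}
\Dqc(\sX) \ar{r}\ar{d}{\L f^*}
  & \Dqc(\sX\setminus Z) \ar{d}
\\
\Dqc(\sX') \ar{r}
  & \Dqc(\sX'\setminus f^{-1}(Z)).
\end{tikzcd}
\]
The hypotheses of \thmref{thm:abstract excision} are routine: perfectness of $\sX$ and $\sX'$ (inherited by their open substacks) implies compact generation of all four categories with compact objects the perfect complexes (\propref{prop:Thomason+finite coh}); the four arrows, being inverse image functors, preserve colimits and compact objects; and the right adjoint of each horizontal restriction is fully faithful by the standard recollement for an open complement.

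It remains to verify that the above square of presentable stable \inftyCats is cartesian. Since both horizontal arrows are Verdier localizations with respective kernels $\Dqc(\sX~\mrm{on}~Z)$ and $\Dqc(\sX'~\mrm{on}~f^{-1}(Z))$, this cartesianness is equivalent to the assertion that
\[
\L f^* : \Dqc(\sX~\mrm{on}~Z) \to \Dqc(\sX'~\mrm{on}~f^{-1}(Z))
\]
is an equivalence of \inftyCats. This is the only substantive content of the theorem.

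In case~(i), I would appeal to the fact that $\Dqc(-~\mrm{on}~Z)$ depends only on the formal completion of the stack along $Z$---a derived form of the Greenlees--May / Beauville--Laszlo theorem developed in \cite[Chap.~7]{LurieSAG}---so the hypothesis $\sX'^\wedge_{f^{-1}(Z)} \simeq \sX^\wedge_{Z}$ immediately implies the claim. Case~(ii) reduces to case~(i): since $f$ is étale with $f^{-1}(Z)_\red \simeq Z_\red$, the infinitesimal lifting property of étale morphisms says that every square-zero thickening of $f^{-1}(Z)$ in $\sX'$ is uniquely pulled back from the corresponding thickening of $Z$ in $\sX$, so the formal completions coincide in the colimit.

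The main obstacle is making the formal-completion equivalence $\Dqc(\sX~\mrm{on}~Z) \simeq \Dqc(\sX^\wedge_Z)$ precise at the level of generality of \da stacks, since the cleanest references in \cite{LurieSAG} treat only (spectral) Deligne--Mumford stacks. Either a smooth-descent bootstrap from the affine case suffices, or one can bypass formal completion altogether by approximating objects of $\Dqc(\sX~\mrm{on}~Z)$ by their push-forwards from finite-order thickenings of $Z$---on each of which $\L f^*$ is tautologically an equivalence under either hypothesis---and then passing to the colimit.
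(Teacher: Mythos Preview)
Your proposal is correct and follows the same approach as the paper, which simply cites \thmref{thm:abstract excision} and defers the details to \cite[Thm.~4.1.1, Rem.~4.1.4]{BachmannKhanRaviSosnilo}. You have correctly identified the substantive input---the equivalence $\L f^* : \Dqc(\sX~\mrm{on}~Z) \xrightarrow{\sim} \Dqc(\sX'~\mrm{on}~f^{-1}(Z))$ via the formal-completion description of supported sheaves---and the one technical wrinkle (extending this beyond affines or Deligne--Mumford stacks), which is exactly what the cited reference handles.
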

    \begin{proof}
      Follows from \thmref{thm:abstract excision} and the corresponding statement at the level of $\Dqc$, see e.g. \cite[Thm.~4.1.1, Rem.~4.1.4]{BachmannKhanRaviSosnilo}.
    \end{proof}

    \begin{cor}\label{cor:K Zariski excision}
      Let $\sX$ be a perfect \dA stack.
      If $\sU \sub \sX$ and $\sV \sub \sX$ are open substacks such that $\sX = \sU \cup \sV$, then the square
      \[ \begin{tikzcd}
        \KB(\sX) \ar{r}\ar{d}
          & \KB(\sU) \ar{d}
        \\
        \KB(\sV) \ar{r}
          & \KB(\sU\cap\sV)
      \end{tikzcd} \]
      is cartesian.
      In particular, there is a long exact Mayer--Vietoris sequence
      \[
        \cdots
        \xrightarrow{\partial} \K_i(\sX)
        \to \K_i(\sU) \oplus \K_i(\sV)
        \to \K_i(\sU\cap\sV)
        \xrightarrow{\partial} \K_{i-1}(\sX) \to \cdots.
      \]
    \end{cor}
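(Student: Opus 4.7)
The plan is to deduce this directly from \thmref{thm:K excision}\ref{item:K excision etale}. Set $Z := \sX \setminus \sU$, which is a cocompact closed subset of $\abs{\sX}$ because $\sU$ is (quasi-compact) open. Take $f$ to be the open immersion $\sV \hookrightarrow \sX$, which is in particular étale.

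The key observation is that $Z \sub \sV$: indeed, since $\sX = \sU \cup \sV$, any point of $\sX$ not in $\sU$ must lie in $\sV$. Consequently the preimage $f^{-1}(Z) = \sV \cap Z$ is just $Z$ itself, and the induced map $f^{-1}(Z) \to Z$ is an isomorphism (not merely on underlying reduced stacks, in fact). Moreover $\sX \setminus Z = \sU$ and $\sV \setminus f^{-1}(Z) = \sU \cap \sV$. I also need to note that $\sU$, $\sV$, and $\sU\cap\sV$ inherit perfectness from $\sX$: both conditions (the Thomason condition and finite cohomological dimension) are preserved under passage to quasi-compact open substacks, so that \thmref{thm:K excision} applies. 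With these identifications in place, the cartesian square in \thmref{thm:K excision} becomes exactly the Mayer--Vietoris square claimed.

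The long exact sequence is then a formal consequence: a cartesian square of spectra is equivalently a cofiber sequence comparing the horizontal (or vertical) fibers, and taking homotopy groups yields the displayed long exact sequence with boundary map $\partial$. The only conceptual content is the verification above that the étale-neighbourhood hypothesis of \thmref{thm:K excision}\ref{item:K excision etale} is satisfied, which is immediate from $\sX = \sU \cup \sV$. Accordingly, I do not anticipate a substantive obstacle; the corollary is essentially a repackaging of \thmref{thm:K excision} in the case of a Zariski cover by two opens.
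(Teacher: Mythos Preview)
Your proof is correct and follows the same approach as the paper: apply \thmref{thm:K excision}\ref{item:K excision etale} with $Z$ the closed complement of $\sU$ and $\sX' = \sV$. You have simply spelled out in more detail the verification that the hypotheses are met and that the long exact sequence follows formally from the cartesian square.
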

    \begin{proof}
      Apply \thmref{thm:K excision} with $\sZ \to \sX$ a closed immersion complementary to $\sU$ and $\sX' = \sV$.
    \end{proof}

    \begin{rem}\label{rem:K descent}
      By \cite[Thm.~7.3.5.2]{LurieHTT}, \corref{cor:K Zariski excision} implies that $\KB$ satisfies Zariski descent on the site of perfect \dA stacks.
      Similarly \thmref{thm:K excision}\itemref{item:K excision etale} implies that it satisfies descent for Nisnevich's completely decomposed étale topology.
      Indeed this topology is generated by families $\{f : \sX' \to \sX, j : \sX \setminus Z \hook \sX\}$, where $f$ is an étale neighbourhood of a cocompact closed $Z\sub \abs{\sX}$ and $j$ is the inclusion of the complement (see \cite[Prop.~2.9]{HoyoisKrishna} for the case of stacks, \cite[\S 3, Prop.~1.4]{MorelVoevodsky} for the case of noetherian finite-dimensional schemes).
      Hence the claim follows from \cite[Thm.~2.2.7]{KhanLocalization}.
    \end{rem}

  \subsection{Excising open subsets}
  \label{ssec:excising opens}

    The problem of excising open subsets is more subtle.
    We are given a morphism of \dA stacks $f : \sX' \to \sX$ and quasi-compact open subsets $\sU \sub \sX$, $\sU' \sub \sX'$.
    Let $\sZ \to \sX$ and $\sZ' \to \sX'$ be closed immersions complementary to $\sU$ and $\sU'$, respectively, fitting in a commutative square
    \[ \begin{tikzcd}
      \sZ' \ar{r}\ar{d}
      & \sX' \ar{d}{f}
      \\
      \sZ \ar{r}
      & \sX
    \end{tikzcd} \]
    which is cartesian on underlying classical truncations.

    For example, if $f$ is affine and the square is also cocartesian, then it is called a \emph{Milnor square}.
    Algebraic K-theory does not generally satisfy excision for Milnor squares (see \cite{Swan}), but it does for Milnor squares that are \emph{homotopy} cartesian:

    \begin{thm}\label{thm:K Milnor}
      Suppose given a Milnor square as above consisting of \emph{perfect} \dA stacks.
      If the square is moreover homotopy cartesian, then the induced square of spectra
      \[ \begin{tikzcd}
        \KB(\sX) \ar{r}\ar{d}
        & \KB(\sZ) \ar{d}
        \\
        \KB(\sX') \ar{r}
        & \KB(\sZ')
      \end{tikzcd} \]
      is cartesian.
    \end{thm}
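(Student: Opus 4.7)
The plan is to reduce the theorem to \thmref{thm:abstract excision} by exhibiting the $\KB$-square as arising from a cartesian square of compactly generated stable \inftyCats. Specifically, I would consider the square
\[ \begin{tikzcd}
  \Dqc(\sX) \ar{r}{\L i^*}\ar{d}{\L f^*}
    & \Dqc(\sZ) \ar{d}{\L g^*}
  \\
  \Dqc(\sX') \ar{r}{\L i'^*}
    & \Dqc(\sZ'),
\end{tikzcd} \]
where $i : \sZ \hook \sX$ and $i' : \sZ' \hook \sX'$ are the closed immersions complementary to $\sU$ and $\sU'$, and $g : \sZ' \to \sZ$ is the map induced by $f$. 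Since all four stacks are perfect, \propref{prop:Thomason+finite coh} identifies their compact objects with perfect complexes, so passing to compact objects recovers the square of $\Dperf$'s whose $\KB$ we wish to understand.

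To apply \thmref{thm:abstract excision} I would verify three hypotheses. First, each functor in the square is of the form $\L h^*$ and hence preserves perfect complexes (\ssecref{ssec:finite coh dim}), so it is a compact colimit-preserving functor between compactly generated \inftyCats. Second, $\L i'^*$ admits as right adjoint the fully faithful functor $\R i'_*$, since $i'$ is a closed immersion. Third---and this is the essential input---the square of $\Dqc$'s is itself cartesian in the \inftyCat of presentable stable \inftyCats.

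The main obstacle is proving this last point, which is a derived Milnor descent statement for quasi-coherent complexes. Its affine version asserts that if $A \simeq B \fibprodR_D C$ is a homotopy pullback of \scrs with $B \to D$ surjective on $\pi_0$ (so that $\Spec(C) \to \Spec(A)$ is complementary to the open immersion $\Spec(B) \hook \Spec(A)$), then the natural functor
\[ \D(A) \to \D(B) \fibprod_{\D(D)} \D(C) \]
is an equivalence: this is the derived analog of the classical Milnor patching theorem for modules, and the key point is that the hypothesis of homotopy cartesianness is exactly what makes the naive gluing derived-correct. The general case of perfect \da stacks reduces to this affine case: by smooth descent for $\Dqc$ (and for homotopy pullbacks of presentable \inftyCats taken corner-by-corner), one localizes to a smooth atlas of $\sX$, along which the Milnor square pulls back (using the assumed homotopy cartesianness) to a Milnor square of affine derived schemes, where the derived patching applies.

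Once this $\Dqc$-cartesianness is established, \thmref{thm:abstract excision} applies directly to yield the desired cartesianness of the $\KB$-square, completing the proof.
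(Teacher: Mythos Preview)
Your approach differs from the paper's, and the central step has a genuine gap.

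The paper simply defers to \cite[Thm.~A]{LandTamme} (affine case) and \cite{BachmannKhanRaviSosnilo} (stacks). Neither reference proceeds by proving that the square of $\Dqc$-categories is cartesian. Land--Tamme instead analyze the \emph{failure} of the category-level square to be a pullback: for any pullback of rings $A \simeq B \times^{\bR}_D C$ they produce an auxiliary ring (their $\odot$-construction) measuring the defect, and show that every localizing invariant is insensitive to it. Under the extra hypothesis $D \simeq B \otimes^{\bL}_A C$ the defect collapses and one reads off the theorem. The stacky extension in \cite{BachmannKhanRaviSosnilo} axiomatizes this as a notion of ``categorical Milnor square'' and verifies that the $\Dqc$-square of a Milnor square of perfect stacks is one; again the argument does not pass through $\Dqc$-descent.

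The gap in your argument is precisely the claim that
\[
  \Dqc(\sX) \longrightarrow \Dqc(\sZ) \fibprod_{\Dqc(\sZ')} \Dqc(\sX')
\]
is an equivalence. You assert this as ``the derived analog of the classical Milnor patching theorem'', but that analogy is treacherous. Lurie's Milnor patching (SAG~\S16.2) gives an equivalence on \emph{connective} modules; stabilizing does not yield the statement for all of $\D$ because the base change functors $-\otimes_B D$ are not left exact on connective modules, so $\mrm{Sp}$ does not commute with the relevant pullback. In fact the functor above is always fully faithful for a Milnor square (tensor the cofibre sequence $A \to B \oplus C \to D$ with any $A$-module), but essential surjectivity genuinely fails without further hypotheses---otherwise \thmref{thm:abstract excision} would already give Milnor excision for K-theory of arbitrary Milnor squares, contradicting Swan's counterexample. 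You have not explained why the homotopy cartesian hypothesis repairs essential surjectivity, and I do not know a reference establishing this; it is not obvious from the counit computation. (Separately, your parenthetical in the affine description is garbled: it is $A\to B$ and $C\to D$ that are surjective on $\pi_0$, and $\Spec(B)\hookrightarrow\Spec(A)$ is a \emph{closed} immersion, not open.)

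So either supply a proof of the $\Dqc$-descent claim under the homotopy cartesian hypothesis, or follow the Land--Tamme route, which sidesteps the issue entirely.
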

    \begin{proof}
      See \cite[Thm.~A]{LandTamme} in the affine case and \cite{BachmannKhanRaviSosnilo} for the case of stacks.
    \end{proof}

    For a general Milnor square, we can still get an excision statement if we replace the closed substacks by their formal completions.
    In fact, we have the following refinement where we consider the natural pro-spectra $\form{\KB}(\sX^\wedge_\sZ)$, $\form{\KB}(\sX'^\wedge_{\sZ'})$ associated to the formal completions (viewed as ind-stacks).

    \begin{thm}\label{thm:K Milnor pro}
      For any Milnor square as above consisting of noetherian derived $1$-Artin stacks with bounded structure sheaves, affine diagonals, and nice stabilizers, the induced square of pro-spectra
      \[ \begin{tikzcd}
        \{\KB(\sX)\}\ar{r}\ar{d}
          & \form{\KB}(\sX^\wedge_{\sZ})\ar{d}
        \\
        \{\KB(\sX')\}\ar{r}
          & \form{\KB}(\sX'^\wedge_{\sZ'})
      \end{tikzcd} \]
      is cartesian.
    \end{thm}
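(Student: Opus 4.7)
My plan is to apply the Land--Tamme-type excision result of \thmref{thm:K Milnor} level by level along the infinitesimal filtration, and then to pass to the pro-limit. Under the noetherian hypothesis the formal completions are presented by the pro-systems of classical infinitesimal neighbourhoods $\{\sZ_n\}$ and $\{\sZ'_n\}$, so that by definition $\form{\KB}(\sX^\wedge_\sZ) \simeq \{\KB(\sZ_n)\}$ and $\form{\KB}(\sX'^\wedge_{\sZ'}) \simeq \{\KB(\sZ'_n)\}$.

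For each $n \ge 1$, I would form the derived fibre product $W_n := \sX \fibprodR_{\sX'} \sZ'_n$. The square
\[
  \begin{tikzcd}
    W_n \ar{r}\ar{d} & \sZ'_n \ar{d} \\
    \sX \ar{r} & \sX'
  \end{tikzcd}
\]
is homotopy cartesian by construction, and its classical truncation is the Milnor square obtained by restricting the original one to $n$-th neighbourhoods: working affine-locally (permissible by \remref{rem:K descent} and the local structure underlying \examref{exam:ANS}), the Milnor identity $A = A' \times_{A'/I'} A/I$ propagates to $A/I^n = A'/I'^n \times_{A'/I'} A/I$ at every level. The ANS hypotheses of \examref{exam:ANS} and the noetherian/bounded-structure-sheaf assumptions descend from $\sX, \sX'$ to $W_n, \sZ'_n$ under closed base change, so all four corners are perfect (\defnref{defn:perfect}). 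Applying \thmref{thm:K Milnor} (or more precisely the Land--Tamme pullback excision underlying it, which only needs homotopy cartesianness and surjectivity of $\sZ'_n \to \sX'$ on $\pi_0$) at each $n$ and assembling yields a cartesian square in pro-spectra
\[
  \begin{tikzcd}
    \{\KB(\sX)\} \ar{r}\ar{d} & \{\KB(W_n)\} \ar{d} \\
    \{\KB(\sX')\} \ar{r} & \form{\KB}(\sX'^\wedge_{\sZ'}).
  \end{tikzcd}
\]

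The remaining and main step is to identify $\{\KB(W_n)\}$ with $\form{\KB}(\sX^\wedge_\sZ) = \{\KB(\sZ_n)\}$. The natural closed immersion $\sZ_n \hookrightarrow W_n$ is an isomorphism on classical truncations, so $W_n$ differs from $\sZ_n$ only by derived structure measured by $\on{Tor}^{\sO_{\sX'}}_{>0}(\sO_\sX, \sO_{\sZ'_n})$. The bounded structure sheaf hypothesis together with noetherianness bounds the Tor-amplitude of $\sO_\sX$ over $\sO_{\sX'}$ uniformly in $n$ (affine-locally), and a standard Artin--Rees argument then shows that these higher Tors pro-vanish as $n \to \infty$, yielding the desired pro-equivalence $\{\KB(W_n)\} \simeq \{\KB(\sZ_n)\}$. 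This pro-Tor vanishing is the technical heart of the argument: it is precisely what separates the pro-statement (which holds) from the naive excision statement for the original Milnor square (which does not). Combining it with the pro-cartesian square above produces the cartesian pro-spectrum square claimed in the theorem.
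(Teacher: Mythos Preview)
The paper gives no proof of its own here: it simply cites \cite{BachmannKhanRaviSosnilo} and, for the affine case, \cite[Thm.~2.32]{LandTamme}. Your overall strategy---apply Land--Tamme excision level by level along the infinitesimal filtration and then use an Artin--Rees pro-vanishing argument to pass between derived and classical thickenings---is indeed the strategy of those references, so in outline you are on the right track.

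There is, however, a genuine error in your construction. You define $W_n := \sX \fibprodR_{\sX'} \sZ'_n$, but in the Milnor square of \ssecref{ssec:excising opens} the morphism $f$ goes from $\sX'$ to $\sX$; there is \emph{no} morphism $\sX \to \sX'$ over which to form this fibre product. Consequently your displayed square (with bottom arrow $\sX \to \sX'$) does not exist, and the rest of the argument is built on it. The same reversal shows up in your ``main step'', where you aim to identify $\{\KB(W_n)\}$ with $\{\KB(\sZ_n)\}$: with the roles corrected, the pro-comparison must take place on the $\sX'$-side, between a derived thickening and $\{\KB(\sZ'_n)\}$. One way to set things up correctly is to form, for each $n$, the \emph{derived ring pullback} $P_n := A' \times^h_{A'/I'^n} A/I^n$ (affine-locally), so that Land--Tamme gives a cartesian square with corners $\KB(P_n), \KB(\sZ_n), \KB(\sX'), \KB(\sZ'_n)$; your propagated identity $A/I^n \simeq A'/I'^n \times_{A'/I'} A/I$ then shows $\pi_0(P_n) \simeq A$, and the Artin--Rees step controls the higher homotopy to yield a pro-equivalence $\{\KB(\sX)\} \simeq \{\KB(P_n)\}$. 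Note in particular that neither your square nor this corrected one is itself a Milnor square in the sense of \ssecref{ssec:excising opens}, so \thmref{thm:K Milnor} does not apply as stated: one genuinely needs the full Land--Tamme pullback theorem you allude to parenthetically, not just the special case recorded here.
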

    \begin{proof}
      See \cite{BachmannKhanRaviSosnilo}.
      In the affine case see also \cite[Thm.~2.32]{LandTamme} where it is proven that the square is ``weakly cartesian''.
    \end{proof}

    Another interesting case is that of a proper representable morphism $f : \sX' \to \sX$ which restricts to an isomorphism $\sU' \to \sU$.
    Then the square is often called a \emph{proper cdh square} or \emph{abstract blow-up square}.
    K-theory typically doesn't satisfy excision with respect to such squares either, but again it holds if we pass to formal completions.

    \begin{thm}[Proper excision]\label{thm:K proper excision}
      Let $f : \sX' \to \sX$ be a proper representable morphism of noetherian derived $1$-Artin stacks with bounded structure sheaves, affine diagonal, and nice stabilizers.
      Let $\sU \sub \sX$ and $\sU' \sub \sX'$ be quasi-compact open subsets over which $f$ restricts to an isomorphism $\sU' \simeq \sU$.
      If $Z$ and $Z'$ are the respective set-theoretic complements of $\sU$ and $\sU'$, then the induced square of pro-spectra
      \[ \begin{tikzcd}
        \{\KB(\sX)\}\ar{r}\ar{d}
          & \form{\KB}(\sX^\wedge_{Z})\ar{d}
        \\
        \{\KB(\sX')\}\ar{r}
          & \form{\KB}(\sX'^\wedge_{Z'})
      \end{tikzcd} \]
      is cartesian.
    \end{thm}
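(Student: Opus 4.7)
The plan is to bootstrap from the excision results for Milnor squares (Theorem K Milnor pro) and formal neighbourhoods (Theorem K excision) that are already in hand, using that the proper representable hypothesis, together with $f|_{\sU'} \colon \sU' \xrightarrow{\sim} \sU$, forces the formal completions of source and target to be related by a controlled proper morphism $\sX'^\wedge_{Z'} \to \sX^\wedge_{Z}$. First I would observe that the statement reduces, by taking horizontal fibres, to showing that the induced map of pro-spectra
\[ \mathrm{fib}\bigl(\{\KB(\sX)\} \to \form{\KB}(\sX^\wedge_{Z})\bigr) \to \mathrm{fib}\bigl(\{\KB(\sX')\} \to \form{\KB}(\sX'^\wedge_{Z'})\bigr) \]
is an equivalence. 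Morally, each of these fibres should be $\KB(\sU)$ and $\KB(\sU')$ respectively — this is a pro-version of the Theorem K localization sequence, completing along $Z$ rather than restricting away from it — and the result then follows immediately from $f|_{\sU'} \simeq \id_\sU$.

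The task is then to make the pro-localization identification precise. For each $n$, the $n$-th infinitesimal neighbourhood $\sZ_n \hookrightarrow \sX$ gives a commutative square
\[ \begin{tikzcd}
  \sZ_n' \ar{r}\ar{d} & \sX' \ar{d}{f} \\
  \sZ_n \ar{r} & \sX
\end{tikzcd} \]
which is cartesian on classical truncations (with $\sZ_n' := \sZ_n \fibprod_\sX \sX'$), and which becomes a Milnor square in the appropriate sense once we note that $f$ is representable and an isomorphism over $\sU = \sX \setminus \sZ_n$ (since it is an iso over $\sU \supseteq \sX \setminus \sZ_n$). The hypotheses of Example ANS ensure these are perfect derived $1$-stacks with bounded structure sheaf, so Theorem K Milnor pro applies termwise in the pro-system indexed by $n$. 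Assembling these Milnor pro-cartesian squares yields exactly the pro-cartesianness of the square in the theorem.

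To reduce the general proper representable case to one where the Milnor pro-excision argument of Theorem K Milnor pro is directly applicable, I would handle the two listed hypotheses separately: in the finite case this is essentially immediate because finite morphisms yield Milnor squares verbatim, while in the case of affine diagonal and nice stabilizers I would invoke a stacky form of Chow's lemma to dominate $f$ by a blow-up along a quasi-smooth centre, then use Theorem K blow to reduce the blow-up contribution to K-theory on the exceptional divisor — which, being the derived pullback of the centre, is captured by the formal completion along $Z$.

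The main obstacle, as in the affine case proven in \cite[Thm.~2.32]{LandTamme}, will be passing from the strict homotopy cartesianness required by Theorem K Milnor pro to the only classically cartesian squares that arise here: the infinitesimal thickenings $\sZ_n' \to \sX'$ need not agree with $\sZ_n \fibprod_\sX \sX'$ on derived structure. This obstruction vanishes in the pro-limit — the derived discrepancy is concentrated in bounded homotopical degree and is annihilated by sufficient thickening — provided the structure sheaves are bounded and the stacks satisfy the Thomason condition, which is precisely the content of the hypotheses in Example ANS. Verifying this pro-nilpotence rigorously is the technical heart of the proof in \cite{BachmannKhanRaviSosnilo}.
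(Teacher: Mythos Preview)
The paper does not supply a proof of this theorem: it is stated without argument, the result being taken from \cite{BachmannKhanRaviSosnilo}. So there is no ``paper's own proof'' to compare against, and your proposal must be judged on its own terms.

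The central gap is your invocation of \thmref{thm:K Milnor pro}. The squares
\[
\begin{tikzcd}
  \sZ_n' \ar{r}\ar{d} & \sX' \ar{d}{f} \\
  \sZ_n \ar{r} & \sX
\end{tikzcd}
\]
are \emph{not} Milnor squares in the sense of \ssecref{ssec:excising opens}: a Milnor square requires $f$ to be affine \emph{and} the square to be cocartesian. A proper representable $f$ is affine only when it is finite, and even then the cocartesianness $\sX \simeq \sX' \amalg_{\sZ_n'} \sZ_n$ is an additional condition that does not follow from $f|_{\sU'}$ being an isomorphism (think of a finite birational morphism that is not a conductor square). So ``Theorem K Milnor pro applies termwise'' is not justified, in either the finite or the general case.

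Your reduction of the general proper case via a ``stacky Chow's lemma to dominate $f$ by a blow-up along a quasi-smooth centre'' is also not right as stated: Chow's lemma produces projective morphisms, not blow-ups in quasi-smooth centres, and the quasi-smoothness is precisely what makes \thmref{thm:K blow} applicable. The actual mechanism in \cite{BachmannKhanRaviSosnilo} (following Kerz--Strunk--Tamme) is more delicate: one uses a result on refining abstract blow-up squares by sequences of \emph{derived} blow-ups in quasi-smooth centres, and this is where the nice-stabilizer hypothesis and the bounded structure sheaf enter. Your final paragraph correctly identifies that the technical heart lies in \cite{BachmannKhanRaviSosnilo}, but the sketch preceding it does not give a viable route to that heart.
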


    \begin{rem}
      For \emph{regular} (nonsingular) schemes and stacks, we will see in \ssecref{ssec:KH} that the analogues of Theorems~\ref{thm:K Milnor pro} and \ref{thm:K proper excision} do hold without passing to formal completions; see Theorems~\ref{thm:KH Milnor} and \ref{thm:KH proper excision}.
    \end{rem}

    \begin{rem}\label{rem:affine low}
      Alternatively, if we restrict our attention to affine (but possibly singular) derived schemes and only look at ``very low'' K-groups, then again Milnor excision and proper excision hold before formal completion; see \cite[Thm~3.3]{Milnor}, \cite[Chap.~XII, Thm.~8.3]{Bass}.
      We will come back to this point in \ssecref{ssec:Weibel}.
    \end{rem}

  \subsection{Bass fundamental sequence}

    The Bass fundamental sequence is one of the few ways to understand negative K-groups.
    In fact, the construction of Bass--Thomason--Trobaugh K-theory $\KB$ is rigged to make it hold for the negative K-groups.

    \begin{thm}\label{thm:Bass}
      Let $\sX$ be a perfect \dA stack.
      Then for every integer $n$ there is an exact sequence of abelian groups
      \[
        0
        \to \K_n(\sX)
        \to \K_n(\sX \times \A^1) \oplus \K_n(\sX \times \A^1)
        \to \K_n(\sX \times \bG_m)
        \xrightarrow{\partial} \K_{n-1}(\sX)
        \to 0,
      \]
      functorial in $\sX$ with respect to inverse images.
      Moreover, the map $\partial$ admits a natural $\K_*(\sX)$-module splitting.
    \end{thm}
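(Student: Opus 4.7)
The approach is to combine the Mayer--Vietoris sequence for the standard Zariski cover of $\P^1_\sX := \sX \times \P^1$ by two copies of $\A^1_\sX$ (intersecting in $\bG_{m,\sX}$) with the projective bundle formula applied to $\P^1_\sX \to \sX$.

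First I would check that the base-changed stacks $\A^1_\sX$, $\bG_{m,\sX}$, and $\P^1_\sX$ remain perfect when $\sX$ is, so that \corref{cor:K Zariski excision} applies to produce the long exact sequence
\[
\cdots \to \K_n(\P^1_\sX) \xrightarrow{(j_+^*, j_-^*)} \K_n(\A^1_\sX) \oplus \K_n(\A^1_\sX) \xrightarrow{r_+^* - r_-^*} \K_n(\bG_{m,\sX}) \xrightarrow{\partial'} \K_{n-1}(\P^1_\sX) \to \cdots .
\]
Then \thmref{thm:K projbun} yields an identification $\K_n(\P^1_\sX) \simeq \K_n(\sX)^{\oplus 2}$ via $(a,b) \mapsto q^*(a) + q^*(b) \cup [\sO(-1)]$; since $\sO(-1)$ restricts to the trivial line bundle on each affine chart, both $j_\pm^*$ become the single map $(a,b) \mapsto p^*(a+b)$ into $\K_n(\A^1_\sX)$, where $p : \A^1_\sX \to \sX$ is the projection.

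The rest is a kernel/image analysis. The image of $\K_n(\P^1_\sX) \to \K_n(\A^1_\sX)^{\oplus 2}$ equals the image of the diagonal $(p^*, p^*) : \K_n(\sX) \to \K_n(\A^1_\sX)^{\oplus 2}$, which is injective because $p^*$ is split by the zero section. Dually, the kernel of $\K_{n-1}(\P^1_\sX) \to \K_{n-1}(\A^1_\sX)^{\oplus 2}$ is the antidiagonal copy $\{(a, -a)\} \simeq \K_{n-1}(\sX)$, and exactness of the long exact sequence forces $\partial'$ to factor uniquely as a surjection $\partial : \K_n(\bG_{m,\sX}) \twoheadrightarrow \K_{n-1}(\sX)$ followed by this inclusion. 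Splicing the long exact sequence at these two ends yields the desired four-term Bass sequence, and functoriality in $\sX$ for inverse images is automatic since every construction involved commutes with base change.

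For the $\K_*(\sX)$-module splitting of $\partial$, the plan is to define
\[
s : \K_{n-1}(\sX) \to \K_n(\bG_{m,\sX}), \qquad a \mapsto \pi^*(a) \cup [t],
\]
where $\pi : \bG_{m,\sX} \to \sX$ is the projection and $[t] \in \K_1(\bG_{m,\sX})$ is pulled back from the tautological unit $t$ on $\bG_m$. This $s$ is $\K_*(\sX)$-linear by the projection formula (\propref{prop:K proj}). The main obstacle is verifying $\partial \circ s = \id_{\K_{n-1}(\sX)}$: one has to trace the Mayer--Vietoris connecting map through the identifications above, and by $\K_*(\sX)$-linearity reduce to the universal case $\sX = \Spec(\Z)$, where $\partial([t]) = [1]$ can be verified directly (for example by recognising $\partial'([t])$ as arising from the Koszul-type resolution of the structure sheaf of a rational point on $\P^1$).
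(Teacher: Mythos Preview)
Your proposal is correct and follows essentially the same approach as the paper's proof: Mayer--Vietoris for the standard affine cover of $\sX \times \P^1$ combined with the projective bundle formula, with the splitting coming from cup product with the tautological unit (which the paper calls the Bott class $b$). The paper defers the kernel/image analysis and the verification of the splitting to \cite[Thm.~7.5]{ThomasonTrobaugh} and \cite[Thm.~4.3.1, Rem.~4.3.2]{CisinskiKhan}, whereas you have sketched these steps explicitly.
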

    \begin{proof}
      By \corref{cor:K Zariski excision} there is a Mayer--Vietoris sequence for the standard affine cover of $\sX \times \P^1$:
      \begin{multline*}
        \cdots
        \to \K_{n+1}(\sX \times \bG_m)
        \xrightarrow{\partial} \K_n(\sX \times \P^1)
        \to \K_n(\sX \times \A^1) \oplus \K_n(\sX \times \A^1)
        \\
        \to \K_n(\sX \times \bG_m)
        \xrightarrow{\partial} \cdots
      \end{multline*}
      Now apply the projective bundle formula (\thmref{thm:K projbun}).
      The splitting comes from the Bott class $b \in \K(\sX \times \bG_m)[-1]$.
      See \cite[Thm.~7.5]{ThomasonTrobaugh} or \cite[Thm.~4.3.1, Rem.~4.3.2]{CisinskiKhan} for details.
    \end{proof}

  \subsection{Continuity}

    \begin{thm}\label{thm:contin K}
      Let $\sX$ be a perfect \dA stack.
      If $\sX$ is the limit of a cofiltered system $(\sX_\alpha)_\alpha$ of perfect \dA stacks with affine transition morphisms, then the canonical map
      \[
        \colim_\alpha \KB(\sX_\alpha)
        \to \KB(\sX)
      \]
      is invertible.
      In particular, there are canonical isomorphisms
      \[
        \colim_\alpha \K_n(X_\alpha)
        \simeq \K_n(X)
      \]
      for all integers $n\in\bZ$.
    \end{thm}
    \begin{proof}
      Combine \thmref{thm:contin}\itemref{item:contin/perf} and \thmref{thm:abstract contin}.
    \end{proof}

  \subsection{Nil-invariance}
  \label{ssec:K nil}

    The following result shows that, up to $\K_{n+1}$, the K-groups of a \scr $R$ are only sensitive to the first $n$ homotopy groups of $R$.

    \begin{prop}\label{prop:K low inv}
      Let $R$ be a \scr.
      For every integer $n$, consider the canonical homomorphism $R \to \tau_{\le n}(R)$ to the $n$-truncation (set $\tau_{\le n}(R) := \tau_{\le0}(R) = \pi_0(R)$ for $n<0$).
      Then the inverse image map
      \[ \KB(\Spec(R)) \to \KB(\Spec(\tau_{\le k}R)) \]
      induces an isomorphism $\K_{i}(\Spec(R)) \simeq \K_{i}(\Spec(\tau_{\le k}R))$ for all $i\le k+1$.
    \end{prop}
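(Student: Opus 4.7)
The strategy is to climb the Postnikov tower of $R$ one stage at a time and invoke a nil-invariance theorem for K-theory under square-zero extensions of \scrs.

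First I would factor $R \to \tau_{\le k} R$ through the Postnikov tower
$$R \simeq \lim_{m} \tau_{\le m} R \to \cdots \to \tau_{\le m+1} R \to \tau_{\le m} R \to \cdots \to \tau_{\le k} R,$$
whose single-step maps $\tau_{\le m+1} R \to \tau_{\le m} R$ are square-zero extensions of connective \scrs with fibre the $(m+1)$-connective module $\pi_{m+1}(R)[m+1]$. Since $R \to \tau_{\le m} R$ becomes arbitrarily highly connective as $m \to \infty$, a cofinality/pro-constancy argument in each fixed total degree reduces the claim to establishing, for every $m \ge k$, that the single-step map induces an isomorphism on $\K_i$ for all $i \le m+1$; composing then yields iso for $i \le k+1$ on each finite stage $\tau_{\le M} R \to \tau_{\le k} R$ with $M \ge k$, and the limit produces the statement for $R$ itself.

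The key input is the following nil-invariance statement: if $A \to B$ is a square-zero extension of connective \scrs whose fibre is $n$-connective ($n \ge 1$), then the fibre of $\K(A) \to \K(B)$ is $(n+1)$-connective, so $\K_i(A) \to \K_i(B)$ is an isomorphism for $i \le n$. This can be extracted from the Dundas--Goodwillie--McCarthy identification of relative K-theory with relative $\on{TC}$, together with the standard connectivity estimate for $\on{THH}$ of a square-zero extension by the module $I$, or more directly by a deformation-theoretic lifting analysis of $\Dperf$ along $I$; see \cite[Lect.~19]{LurieKLect}. Applied with $n = m+1$ this is exactly the iso on $\K_i$ for $i \le m+1$ required above.

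This handles non-negative degrees, where $\K_i = \K^{\mrm{B}}_i$. For the negative degrees (which enter only when $k \le -1$ and ask for iso on $\K^{\mrm{B}}_i$ with $i \le k+1 \le 0$) I would run descending induction via the Bass fundamental sequence (\thmref{thm:Bass}), expressing $\K^{\mrm{B}}_{-n}(\Spec R)$ in terms of higher K-groups of $\Spec R \times \A^1$ and $\Spec R \times \bG_m$; these are affine with connective structure sheaves, and $\tau_{\le k}$ commutes with polynomial and Laurent polynomial extensions, so nil-invariance propagates from each degree into the next negative one. The main obstacle is the square-zero nil-invariance input of the middle step: it is a substantive trace-methods theorem rather than a formal consequence of the Postnikov picture, while the reductions around it are essentially bookkeeping.
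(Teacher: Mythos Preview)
Your strategy—climb the Postnikov tower, invoke a connectivity estimate for K-theory along square-zero extensions, then Bass-deloop for negative degrees—is the standard one and is what the references the paper cites carry out.  Two points need tightening.

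The limit step is not as formal as you suggest.  Knowing that each finite composite $\tau_{\le M}R \to \tau_{\le k}R$ induces isomorphisms on $\K_i$ for $i\le k+1$ does not by itself yield the same for $R \to \tau_{\le k}R$: you still need $\K_i(R) \simeq \K_i(\tau_{\le M}R)$ for $M$ large, which is precisely the type of statement being proved, so ``pro-constancy'' alone is circular.  The clean fix is to argue directly that base change $\Dperf(R)\to\Dperf(\tau_{\le k}R)$ is essentially surjective and induces $k$-connected maps on mapping spectra (obstruction theory for lifting perfect complexes along a $(k{+}1)$-connective map of connective rings), then feed this into a Waldhausen approximation/connectivity theorem; this handles $R\to\tau_{\le k}R$ in one step and makes the square-zero reduction and the limit unnecessary.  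Alternatively one may invoke $\Dperf(R)\simeq\lim_M\Dperf(\tau_{\le M}R)$ and check that K-theory commutes with this particular tower because the transition functors become arbitrarily highly connective on mapping spectra—true, but it must be said.

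Your remark that negative degrees ``enter only when $k\le -1$'' is off: for every $k\ge 0$ the proposition asks for isomorphisms on $\K_i$ for \emph{all} $i\le k+1$, including negative $i$, so the Bass induction is needed throughout (for $k<0$ the statement is subsumed by the case $k=0$ since $\tau_{\le k}R=\pi_0(R)$).  The Bass argument you give is otherwise fine, using that truncation commutes with $(-)[t]$ and $(-)[t,t^{-1}]$ by flatness.
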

    \begin{proof}
      See \cite[Prop.~4.2]{KhanKdescent} or \cite[Prop.~5.1.3]{BachmannKhanRaviSosnilo}.
    \end{proof}

    In particular, an affine derived scheme $X$ and its classical truncation $X_\cl$ have the same K-groups up to $\K_1$.
  
    \begin{prop}[Nil-invariance]\label{prop:K nil invariance}
      Let $i : Z \to X$ be a surjective closed immersion of affine derived schemes.
      Then the map $i^* : \KB(X) \to \KB(Z)$ is $1$-connective; i.e., it is surjective on $\K_1$ and bijective on $\K_n$ for all $n\le 0$.
    \end{prop}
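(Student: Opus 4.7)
The plan is to reduce in two stages to a well-known classical statement about nilpotent ideals in ordinary commutative rings.

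\textbf{Step 1 (reduction to classical rings).} Write $X = \Spec(R)$, $Z = \Spec(S)$ and apply \propref{prop:K low inv} with $k=1$ to both $R$ and $S$: the truncation maps $\KB(X) \to \KB(X_\cl)$ and $\KB(Z) \to \KB(Z_\cl)$ are isomorphisms on $\K_i$ for all $i\le 1$. By naturality it therefore suffices to prove the analogous statement for the classical closed immersion $i_\cl : Z_\cl \to X_\cl$. Writing $X_\cl = \Spec(A)$ and $Z_\cl = \Spec(A/I)$, surjectivity of $i$ on points translates into the condition $I \subseteq \sqrt{0}\subset A$.

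\textbf{Step 2 (reduction to nilpotent ideals).} Present $A$ as a filtered colimit of its finitely generated (hence noetherian) subrings $A_\alpha$ and put $I_\alpha := I \cap A_\alpha$. Each $I_\alpha$ is contained in the nilradical of the noetherian ring $A_\alpha$ and so is nilpotent, and the quotient $A/I$ is the filtered colimit of the $A_\alpha/I_\alpha$. Since $\KB$ of classical rings commutes with filtered colimits and filtered colimits of spectra preserve $n$-connectivity, we reduce to the case where $A$ is classical noetherian and $I$ is a nilpotent ideal.

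\textbf{Step 3 (classical nilpotent case).} By induction on the nilpotency degree of $I$, using the two-out-of-three property for $1$-connective maps applied to the factorization $A \to A/I^k \to A/I^{k-1}$, we may assume $I^2 = 0$. Surjectivity on $\K_1$ is then elementary: any element of $\mrm{GL}_n(A/I)$ lifts to a matrix over $A$ whose determinant is a unit because $1+I \subseteq A^\times$. For $\K_0$, nil-invariance is the classical fact that idempotents and finitely generated projective modules lift uniquely modulo a nilpotent ideal. For $\K_n$ with $n<0$, I would run the Bass fundamental sequence of \thmref{thm:Bass} as a ladder relating the maps for $A \to A/I$ and $A \otimes R \to (A/I) \otimes R$ for $R = \bZ[t]$ and $\bZ[t,t^{-1}]$; since the Bass sequence expresses $\K_{n-1}$ as a cokernel involving $\K_n$'s of polynomial and Laurent polynomial rings, the Five Lemma propagates nil-invariance from $\K_0$ downwards to all $n\le 0$.

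The main obstacle is Step~3, where one must combine the classical nil-invariance of $\K_0$ and $\K_1$ with a stable induction for negative K-groups; the cleanest way to package this is via the Bass sequence, which is precisely why $\KB$ (rather than only connective $\K$) is the right object to work with here. Steps~1 and~2 are formal, but they are essential for bridging the derived and classical pictures.
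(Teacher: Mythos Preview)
Your proof is correct and follows the paper's route: reduce to classical rings via \propref{prop:K low inv}, handle $\K_0$ and $\K_1$ by classical nil-invariance (which the paper simply cites as \cite[Chap.~IX, Prop.~1.3]{Bass}), and propagate to negative degrees via the Bass fundamental sequence. Your Step~2 is harmless but unnecessary, since the lifting arguments for $\K_0$ and $\K_1$ already work for arbitrary nil ideals without first reducing to the nilpotent case.
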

    \begin{proof}
      By \propref{prop:K low inv} we may replace $X$ and $Z$ by their classical truncations.
      By the Bass fundamental sequence (\thmref{thm:Bass}) it suffices to show surjectivity on $\K_1$ and bijectivity on $\K_0$.
      This is \cite[Chap.~IX, Prop.~1.3]{Bass}.
    \end{proof}

    \begin{rem}\label{rem:K nil fail}
      \propref{prop:K nil invariance} does not generalize to higher K-groups.
      Contrast with \thmref{thm:KH derived invariance}.
    \end{rem}

    \begin{rem}
      For non-affine derived schemes or algebraic spaces, \propref{prop:K nil invariance} still holds below the Krull dimension $d$ (i.e., any surjective closed immersion will induce a bijection on $\K_n$ for $n<-d$).
      For \dA stacks (with affine diagonal and nice stabilizers), the same holds if we replace $d$ by the Nisnevich cohomological dimension (see \cite[Cor.~5.1.4]{BachmannKhanRaviSosnilo}).
    \end{rem}

\section{G-theory}
\label{sec:G}

  \subsection{Definition and basic properties}

    For a noetherian \dA stack $\sX$, the \emph{G-theory spectrum} $\G(\sX)$ is defined as the algebraic K-theory of the stable \inftyCat of coherent complexes on $\sX$:
    \[ \G(\sX) = \K(\Dcoh(\sX)). \]

    \begin{prop}\label{prop:G connective}
      Let $\sX$ be a noetherian \dA stack.
      Then the canonical map
      \[ \G(\sX) = \K(\Dcoh(\sX)) \to \KB(\Dcoh(\sX)) \]
      is invertible.
      In other words, the spectrum $\KB(\Dcoh(\sX))$ is connective.
    \end{prop}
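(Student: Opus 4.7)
The plan is to show that $\K_i(\Dcoh(\sX))$ vanishes for all $i<0$; this is equivalent to the assertion that the canonical map from the connective K-theory $\K(\Dcoh(\sX))$ to its non-connective version $\KB(\Dcoh(\sX))$ is an equivalence.

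The strategy is to apply Schlichting's vanishing theorem for negative K-theory (see \emph{Negative K-theory of derived categories}, Math.~Z.~2006, and its $\infty$-categorical repackaging by Antieau--Gepner--Heller), which states that any small stable \inftyCat equipped with a bounded t-structure whose heart is a noetherian abelian category has vanishing negative K-groups. So what I need to verify is that $\Dcoh(\sX)$ fits into the hypotheses of this theorem.

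First I would check that the standard t-structure on $\Dqc(\sX)$ from \defnref{defn:t-structure} restricts to a well-defined t-structure on $\Dcoh(\sX)$; this is already noted there. Second I would verify that this restricted t-structure is \emph{bounded}: by definition, a coherent complex is pseudocoherent (hence eventually connective) and eventually coconnective, so it lies in $\Dcoh(\sX)_{\ge -n} \cap \Dcoh(\sX)_{\le n}$ for some $n$. Third I would identify the heart $\Coh(\sX) = \Dcoh(\sX)^\heartsuit$ with the abelian category of coherent sheaves on the classical truncation $\sX_\cl$ (as recorded in \defnref{defn:t-structure}); since $\sX$ is noetherian by hypothesis and this notion depends only on $\sX_\cl$, the heart is a noetherian abelian category.

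With these three points in hand, Schlichting's theorem applies and gives $\K_i(\Dcoh(\sX)) = 0$ for $i<0$. The spectrum $\KB(\Dcoh(\sX))$ is therefore connective, and the connective-cover map $\K(\Dcoh(\sX)) \to \KB(\Dcoh(\sX))$ is an equivalence. The main obstacle here is simply identifying the right black box: the content of the proposition is essentially nothing more than Schlichting's vanishing theorem, once one has assembled the t-structure data on $\Dcoh(\sX)$ in the derived-stack setting.
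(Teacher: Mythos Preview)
Your proposal is correct and matches the paper's proof essentially verbatim: the paper simply notes that $\Dcoh(\sX)$ admits a bounded t-structure with noetherian heart and invokes \cite[Thm.~1.2]{AntieauGepnerHeller}, which is precisely the Antieau--Gepner--Heller repackaging of Schlichting's vanishing theorem that you identified. Your additional unpacking of why the t-structure is bounded and why the heart is noetherian is accurate and fills in details the paper leaves implicit.
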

    \begin{proof}
      Since $\Dcoh(\sX)$ admits a bounded t-structure with noetherian heart, this follows from \cite[Thm.~1.2]{AntieauGepnerHeller}.
    \end{proof}

    \begin{rem}
      Most of our discussion on G-theory can be extended from noetherian to qcqs stacks without modification.
      \propref{prop:G connective} seems to be an exception: I don't know whether $\KB(\Dcoh(\sX))$ will be connective for $\sX$ non-noetherian (cf. \cite[Conj.~B]{AntieauGepnerHeller}).
    \end{rem}

    \begin{prop}\label{prop:G Quillen}
      Let $\sX$ be a noetherian \dA stack.
      Denote by $\Coh(\sX)$ the abelian category of coherent sheaves on $\sX$ and by $\K(\Coh(\sX))$ its K-theory in the sense of Quillen.
      Then there is a canonical isomorphism
      \[ \K(\Coh(\sX)) \to \G(\sX), \]
      functorial in $\sX$.
    \end{prop}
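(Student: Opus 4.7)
The plan is to apply Barwick's \emph{theorem of the heart}, which identifies the K-theory of a stable \inftyCat admitting a bounded t-structure with noetherian heart with the Quillen K-theory of that heart. Concretely, I would argue as follows.

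First, recall from \defnref{defn:t-structure} that the canonical t-structure on $\Dqc(\sX)$ restricts to a t-structure on $\Dcoh(\sX)$, and this restricted t-structure is bounded since every coherent complex is by definition eventually connective and eventually coconnective. Its heart is, by construction, the abelian category $\Coh(\sX)$ of coherent sheaves on $\sX_\cl$, which is noetherian because $\sX$ is noetherian.

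Next, I would invoke the theorem of the heart in the form of \cite[Thm.~1.2]{AntieauGepnerHeller} (already cited in the proof of \propref{prop:G connective}), which provides a natural equivalence
\[
  \K(\Coh(\sX)) \xrightarrow{\ \sim\ } \K(\Dcoh(\sX))
\]
whenever the left-hand side denotes the Waldhausen K-theory of the stable \inftyCat $\Dcoh(\sX)$ equipped with its bounded t-structure and the right-hand side is the Waldhausen K-theory of the exact \inftyCat associated to the heart. Since $\Coh(\sX)$ is an ordinary abelian category, its Waldhausen K-theory in this sense coincides with Quillen's classical K-theory of the exact category $\Coh(\sX)$, by the comparison results of \cite[Sect.~7, 8]{BarwickWald}. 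Composing these identifications gives the desired canonical map, and its functoriality in $\sX$ follows from the functoriality of the t-structure under pullback along suitable morphisms (or, more robustly, from the naturality of the theorem of the heart in the stable \inftyCat with bounded t-structure).

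The only subtle point is the verification that the heart is noetherian as an abelian category, which is the hypothesis of \cite[Thm.~1.2]{AntieauGepnerHeller}; this is where the standing noetherianness assumption on $\sX$ is genuinely used. Everything else is formal once the t-structure on $\Dcoh(\sX)$ is in hand.
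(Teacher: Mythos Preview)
Your approach is correct and essentially identical to the paper's: both invoke the theorem of the heart applied to the bounded t-structure on $\Dcoh(\sX)$ with heart $\Coh(\sX)$. The only difference is that the paper cites Barwick's original version \cite[Thm.~6.1]{BarwickHeart}, which for \emph{connective} K-theory requires only a bounded t-structure and no noetherianness hypothesis on the heart; the noetherian condition you emphasize is needed only for the nonconnective extension in \cite{AntieauGepnerHeller} (used in \propref{prop:G connective}), not here.
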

    \begin{proof}
      Since $\Dcoh(\sX)$ admits a bounded t-structure with heart $\Coh(\sX)$, this follows from \cite[Thm.~6.1]{BarwickHeart}.
    \end{proof}

    \begin{cor}[Derived invariance]\label{cor:G derived invariance}
      Let $\sX$ be a noetherian \dA stack and write $i : \sX_\cl \to \sX$ for the inclusion of the classical truncation.
      Then the direct image map
      \[ i_* : \G(\sX_\cl) \to \G(\sX) \]
      is invertible.
    \end{cor}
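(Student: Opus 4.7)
The plan is to reduce the claim to the corresponding statement on the hearts of the canonical t-structures, where it becomes essentially tautological. The two key inputs are \propref{prop:G Quillen} (which identifies $\G$ with the Quillen K-theory of the abelian category of coherent sheaves) and the observation recorded in \defnref{defn:t-structure} that $\Coh(\sX) \simeq \Coh(\sX_\cl)$ via direct image along $i$.

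More precisely, the first step is to produce a commutative square
\[
\begin{tikzcd}
\K(\Coh(\sX_\cl)) \ar{r}{\sim}\ar{d}{i_*} & \G(\sX_\cl) \ar{d}{i_*} \\
\K(\Coh(\sX)) \ar{r}{\sim} & \G(\sX)
\end{tikzcd}
\]
in which the horizontal arrows are the isomorphisms of \propref{prop:G Quillen}. The commutativity of this square is the naturality of that proposition (i.e., of the heart theorem \cite[Thm.~6.1]{BarwickHeart}) with respect to exact functors of stable \inftyCats that are t-exact for bounded t-structures with noetherian hearts. To apply it to $i_* : \Dcoh(\sX_\cl) \to \Dcoh(\sX)$, I need to know that $i_*$ is t-exact, which holds because $i$ is an affine morphism (a closed immersion of stacks); this t-exactness was already invoked in \remref{rem:ophunas}.

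Given the square, the right vertical arrow is an isomorphism if and only if the left one is. But the left vertical arrow is induced by Quillen's functorial K-theory construction from the functor of abelian categories $i_* : \Coh(\sX_\cl) \to \Coh(\sX)$, and this functor is an equivalence by \defnref{defn:t-structure} (where it was observed that $\Qcoh(\sX)^\heartsuit$ and $\Coh(\sX)$ only depend on the classical truncation). Hence $\K(i_*)$ is an isomorphism, and the result follows.

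There is no real obstacle; the content is entirely in recognizing that all the needed pieces — Quillen's description of $\G$, t-exactness of $i_*$, and the equivalence of hearts — have already been established. The only thing that requires mild care is the naturality of the heart theorem isomorphism, which is standard and can be deduced directly from Barwick's proof.
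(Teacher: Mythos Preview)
Your proof is correct and is essentially the same as the paper's: both invoke \propref{prop:G Quillen}, observe that $i_* : \Dcoh(\sX_\cl) \to \Dcoh(\sX)$ is t-exact (since $i$ is affine), and conclude from the fact that it induces an equivalence $\Coh(\sX_\cl) \simeq \Coh(\sX)$ on hearts. The paper's proof is simply a one-line compression of exactly the argument you wrote out.
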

    \begin{proof}
      Follows immediately from \propref{prop:G Quillen}, since $i_* : \Dcoh(\sX_\cl) \to \Dcoh(\sX)$ is t-exact and induces an equivalence $\Coh(\sX_\cl) \simeq \Coh(\sX)$ on hearts.
    \end{proof}

    \begin{thm}[Poincaré duality]\label{thm:Poincare}
      Let $\sX$ be a noetherian \dA stack.
      If $\sX$ has bounded structure sheaf, then the inclusion $\Dperf(\sX) \sub \Dcoh(\sX)$ induces canonical maps
      \[ \K(\sX) \to \KB(\sX) \to \KB(\Dcoh(\sX)) \simeq \G(\sX) \]
      which are invertible if $\sX$ is regular.
    \end{thm}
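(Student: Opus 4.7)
The plan is to break the theorem into three pieces: (a) constructing the canonical maps, (b) showing $\KB(\sX)\to\G(\sX)$ is invertible in the regular case, and (c) deducing $\K(\sX)\to\KB(\sX)$ formally from (b).

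For (a), under the bounded-structure-sheaf hypothesis, \remref{rem:Perf in Coh} gives a fully faithful exact inclusion $\Dperf(\sX)\subseteq\Dcoh(\sX)$ of stable \inftyCats. Applying the Waldhausen $S_\bullet$-construction and its Bass delooping functorially yields maps $\K(\sX)\to\K(\Dcoh(\sX))$ and $\KB(\sX)\to\KB(\Dcoh(\sX))$; the target of the latter is identified with $\G(\sX)$ by \propref{prop:G connective}, and the target of the former is its connective cover. This assembles into the claimed factorization $\K(\sX)\to\KB(\sX)\to\G(\sX)$.

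For (b), the strategy is to prove that under regularity the inclusion $\Dperf(\sX)\subseteq\Dcoh(\sX)$ is in fact an equality; the induced map on $\KB$ is then invertible by functoriality. Since every coherent complex is pseudocoherent, \lemref{lem:perfect is pseudocoherent}\,\itemref{item:perfect is pseudocoherent/fTa} reduces this to checking that every coherent complex on $\sX$ has finite Tor-amplitude, a property which is smooth-local, so we may take $\sX=\Spec(A)$. Here I would interpret ``regular'' in the only way compatible with \lemref{lem:perfect is pseudocoherent}\,(v): $A$ is discrete and $\pi_0(A)$ is a regular noetherian ring. Serre's theorem then supplies, for each finitely generated $A$-module, a finite resolution by finitely generated projectives. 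Using that the perfect complexes form a thick subcategory (\lemref{lem:perfect is pseudocoherent}\,(i)), a short Postnikov induction on the bounded $t$-structure of $\Dcoh(\Spec A)$ upgrades this to perfectness of any coherent complex.

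For (c), by construction $\K(\sX)\to\KB(\sX)$ is the canonical map from the connective cover. Once (b) identifies $\KB(\sX)$ with $\G(\sX)$, which is connective by \propref{prop:G connective}, this map is automatically an equivalence.

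The main obstacle is conceptual rather than technical: pinning down the meaning of ``regular'' in the derived setting. \lemref{lem:perfect is pseudocoherent}\,(v) forces any interpretation under which $\Dperf=\Dcoh$ to include discreteness, so the theorem is really a statement about classical regular stacks with an eventually coconnective derived enhancement that, for the conclusion, must actually be classical. Once this is granted, everything else is a direct assembly of Serre's theorem, the thick-subcategory closure of perfect complexes, and \propref{prop:G connective}.
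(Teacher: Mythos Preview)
Your proposal is correct and follows essentially the same approach as the paper: show that regularity forces $\Dperf(\sX)=\Dcoh(\sX)$, so the second map is an equivalence, and then conclude the first map is an equivalence because $\KB(\sX)\simeq\G(\sX)$ is connective by \propref{prop:G connective}. The paper simply asserts the equality $\Dperf=\Dcoh$ in the regular case, whereas you spell out the smooth-local reduction to Serre's theorem and the Postnikov induction; your discussion of what ``regular'' must mean is accurate extra commentary but not part of the paper's argument.
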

    \begin{proof}
      If $\sX$ is regular, then the inclusion $\Dperf(\sX) \sub \Dcoh(\sX)$ is an equality, hence the second map is invertible.
      From this it follows that $\KB(\sX)$ is connective, so the first map is also invertible (since it is a connective cover).
    \end{proof}

    \begin{rem}
      The map $\K(\sX) \to \G(\sX)$, or its factorization through $\KB(\sX)$, is sometimes called the \emph{Cartan map}.
    \end{rem}

  \subsection{Operations}

    Since tensoring with a perfect complex preserves coherence, we get a cap product
    \[ \cap : \KB(\sX) \otimes \G(\sX) \to \G(\sX) \]
    which is part of a $\KB(\sX)$-module structure on $\G(\sX)$.

    For any morphism $f : \sX \to \sY$ of finite Tor-amplitude, the inverse image functor $f^* : \Dcoh(\sY) \to \Dcoh(\sX)$ gives rise to a Gysin map
    \[ f^* : \G(\sY) \to \G(\sX). \]
    This is compatible with the K-theoretic inverse image under the maps in \thmref{thm:Poincare}.

    Let $f$ be a proper morphism.
    If $f$ is representable, or more generally of finite cohomological dimension, then there is a direct image map
    \[ f_* : \G(\sX) \to \G(\sY). \]
    This is compatible with the K-theoretic direct image (when $f$ is of finite Tor-amplitude and almost of finite presentation) under the maps in \thmref{thm:Poincare}.
    By \thmref{thm:uni fin coh dim} it is also $\KB(\sY)$-linear (a homomorphism of $\KB(\sY)$-module spectra).
    In other words:

    \begin{prop}[Projection formula]\label{prop:G projection}
      If $f : \sX \to \sY$ is a proper and of finite cohomological dimension morphism of noetherian \dA stacks, then we have canonical identifications
      \begin{equation*}
        y \cap f_*(x)
        \simeq f_*(f^*(y) \cap x)
      \end{equation*}
      in $\G(\sY)$, for all $x \in \G(\sX)$, $y \in \KB(\sY)$.
    \end{prop}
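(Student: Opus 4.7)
The plan is to deduce this at the level of spectra from the corresponding projection formula at the level of complexes, upgraded to a statement about module-linearity in stable \inftyCats.

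First, I would unpack the ingredients. Since $f$ is proper and of finite cohomological dimension, cohomological properness (\defnref{defn:coh proper}) gives a well-defined direct image $\R f_* : \Dcoh(\sX) \to \Dcoh(\sY)$, which on K-theory induces $f_* : \G(\sX) \to \G(\sY)$ as in the discussion preceding the proposition. The inverse image $\L f^* : \Dperf(\sY) \to \Dperf(\sX)$ is symmetric monoidal, so it makes $\Dcoh(\sX)$, equipped with its natural tensor action of $\Dperf(\sX)$, into a $\Dperf(\sY)$-module stable \inftyCat. On the other side, $\Dcoh(\sY)$ is a $\Dperf(\sY)$-module via its own tensor action. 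The key input is the projection formula for quasi-coherent complexes (\remref{rem:apdsfb}(c), available under the universal-finite-cohomological-dimension hypothesis implicit in \propref{prop:uni fin coh dim}): there is a natural isomorphism
\[
  \R f_*(\sF) \otimes \sG \simeq \R f_*\bigl(\sF \otimes \L f^*(\sG)\bigr)
\]
for $\sF \in \Dqc(\sX)$, $\sG \in \Dqc(\sY)$.

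Second, I would restrict this isomorphism to $\sF \in \Dcoh(\sX)$ and $\sG \in \Dperf(\sY)$. Here $\L f^*(\sG)$ is perfect, so $\sF \otimes \L f^*(\sG)$ is coherent; its direct image is coherent by cohomological properness, and $\R f_*(\sF) \otimes \sG$ is coherent for the same reason. Thus the projection isomorphism promotes $\R f_* : \Dcoh(\sX) \to \Dcoh(\sY)$ to a $\Dperf(\sY)$-linear functor of $\Dperf(\sY)$-module stable \inftyCats. The full homotopy-coherent naturality needed to interpret this as a map of module objects is standard; it is the same data that goes into \remref{rem:apdsfb}(c) and its incarnation in \propref{prop:K proj}.

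Finally, I would apply the (lax symmetric monoidal) K-theory functor $\KB$ from stable \inftyCats to spectra. This sends the $\Einfty$-algebra $\Dperf(\sY)$ to the $\Einfty$-ring spectrum $\KB(\sY)$, a module category over it to a $\KB(\sY)$-module spectrum, and a linear functor between such to a $\KB(\sY)$-linear map. By \propref{prop:G connective}, $\KB(\Dcoh(\sX)) \simeq \G(\sX)$ and similarly for $\sY$, so the outcome is that $f_* : \G(\sX) \to \G(\sY)$ is $\KB(\sY)$-linear. Unwinding the meaning of linearity gives precisely the identification $y \cap f_*(x) \simeq f_*(f^*(y) \cap x)$.

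The main obstacle, if any, is purely bookkeeping: verifying that the projection isomorphism of \remref{rem:apdsfb}(c) is natural enough—as a transformation of bifunctors of $\infty$-categories—to constitute a morphism in the appropriate \inftyCat of $\Dperf(\sY)$-modules, so that K-theory functoriality applies. All the geometric content is already packaged in \propref{prop:uni fin coh dim}; once the projection formula is available at the level of $\Dqc$, the passage to $\G$-theory is formal.
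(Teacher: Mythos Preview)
Your proposal is correct and follows essentially the same approach as the paper: the paper's proof is nothing more than the sentence ``By \propref{prop:uni fin coh dim} it is also $\KB(\sY)$-linear'' preceding the statement, and you have faithfully unpacked what that sentence means---the categorical projection formula for $\R f_*$ on $\Dqc$, restricted to $\Dcoh$ and $\Dperf$, then transported to spectra via the lax monoidal K-theory functor.
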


    \thmref{thm:uni fin coh dim} similarly implies:

    \begin{prop}[Base change formula]\label{prop:G base change}
      Suppose given a homotopy cartesian square of noetherian \dA stacks
      \begin{equation*}
        \begin{tikzcd}
          \sX' \ar{r}{g}\ar{d}{p}
            & \sY' \ar{d}{q}
          \\
          \sX \ar{r}{f}
            & \sY.
        \end{tikzcd}
      \end{equation*}
      If $f$ is proper and of finite cohomological dimension and $q$ is of finite Tor-amplitude, then we have a canonical homotopy
      \begin{equation*}
        q^*f_* \simeq g_*p^*
      \end{equation*}
      of maps $\G(\sX) \to \G(\sY')$.
    \end{prop}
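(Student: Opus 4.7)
The plan is to reduce the claim to the corresponding base change equivalence at the level of quasi-coherent complexes (\propref{prop:uni fin coh dim}) and then transport it through the K-theory construction, exactly as in \propref{prop:K base change}.

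First, I would check that, under the stated hypotheses, all four functors of the base change square restrict to functors on coherent complexes. The direct image $\R f_*$ preserves $\Dcoh$ because $f$ is proper and of finite cohomological dimension (so $f$ is cohomologically proper in the sense of \defnref{defn:coh proper}). The inverse image $\L q^*$ preserves $\Dcoh$ because $q$ is of finite Tor-amplitude: $\L q^*$ always preserves pseudocoherence, and finite Tor-amplitude ensures that eventually coconnective complexes go to eventually coconnective complexes. The analogous properties pass to the base change maps: $p$ inherits finite Tor-amplitude from $q$, and $g$ inherits properness and (universal) finite cohomological dimension from $f$. Thus both composites $\L q^* \R f_*$ and $\R g_* \L p^*$ define exact endofunctors from $\Dcoh(\sX)$ to $\Dcoh(\sY')$.

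Next, I would invoke \propref{prop:uni fin coh dim} to obtain the canonical base change equivalence $\L q^* \R f_* \simeq \R g_* \L p^*$ of functors $\Dqc(\sX) \to \Dqc(\sY')$. Restricting along the inclusion $\Dcoh(\sX) \sub \Dqc(\sX)$ and using the preservation statements just established, this yields an equivalence of exact functors $\Dcoh(\sX) \to \Dcoh(\sY')$. Applying the Waldhausen K-theory functor $\K(-)$, which sends natural equivalences of exact functors between stable \inftyCats to homotopies of maps of spectra, produces the desired homotopy $q^* f_* \simeq g_* p^*$ of maps $\G(\sX) \to \G(\sY')$.

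The one subtle point is hypothesis bookkeeping: \propref{prop:uni fin coh dim} requires $f$ to be \emph{universally} of finite cohomological dimension so that $g$ is again of finite cohomological dimension and so that formation of $\R f_*$ is stable under base change. I read this as implicit in the statement, consistent with the convention already used for \propref{prop:K base change}; beyond this, the argument is a routine transport of the quasi-coherent base change equivalence through K-theory, and I do not anticipate any further obstacle.
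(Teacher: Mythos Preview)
Your proposal is correct and follows the same approach as the paper, which simply states that the result follows from \propref{prop:uni fin coh dim}. You have fleshed out the details the paper leaves implicit---checking that all four functors restrict to $\Dcoh$ and noting the need for \emph{universal} finite cohomological dimension---which is exactly what one needs to make the one-line citation rigorous.
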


    For G-theory we have the following analogue of \thmref{thm:K localization}:

    \begin{thm}[Localization]\label{thm:G localization}
      Let $i : \sZ \to \sX$ be a closed immersion of noetherian \dA stacks with complementary open immersion $j : \sU \to \sX$.
      Then there is a canonical exact triangle of spectra
      \begin{align*}
        \G(\sZ)
        \xrightarrow{i_*} \G(\sX)
        \xrightarrow{j^*} \G(\sU).
      \end{align*}
    \end{thm}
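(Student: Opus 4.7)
The plan is to reduce the statement to K-theory of noetherian abelian categories via derived invariance, then invoke Quillen's classical localization and dévissage theorems.

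First, by \corref{cor:G derived invariance}, the direct image along the inclusion of classical truncation induces an equivalence $\G(\sX_\cl) \xrightarrow{\sim} \G(\sX)$, and similarly for $\sZ$ and $\sU$; moreover the induced diagram on classical truncations is again a complementary pair of a closed immersion and its open complement. One may therefore reduce to the case where $\sX$, $\sZ$, $\sU$ are classical noetherian algebraic stacks. Using \propref{prop:G Quillen} to rewrite $\G$ as Quillen K-theory of the abelian category of coherent sheaves, the goal becomes producing a fibre sequence
\[ \K(\Coh(\sZ)) \xrightarrow{i_*} \K(\Coh(\sX)) \xrightarrow{j^*} \K(\Coh(\sU)). \]

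Next, let $\Coh_\sZ(\sX) \subseteq \Coh(\sX)$ denote the Serre subcategory of coherent sheaves $\sF$ with $j^*\sF = 0$, i.e., those set-theoretically supported on $\sZ$. I would then show that $j^*$ descends to an equivalence of abelian categories $\Coh(\sX)/\Coh_\sZ(\sX) \xrightarrow{\sim} \Coh(\sU)$. The kernel of $j^*$ is manifestly $\Coh_\sZ(\sX)$, so the content is essential surjectivity: given $\sF \in \Coh(\sU)$, the quasi-coherent direct image $j_*\sF \in \Qcoh(\sX)$ is the filtered union of its coherent subsheaves (since $\Qcoh(\sX)$ is locally noetherian on a noetherian stack), and one such coherent subsheaf $\sG \subseteq j_*\sF$ will satisfy $j^*\sG \simeq \sF$. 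Applying Quillen's localization theorem for K-theory of abelian categories to this Serre exact sequence yields the fibre sequence
\[ \K(\Coh_\sZ(\sX)) \to \K(\Coh(\sX)) \xrightarrow{j^*} \K(\Coh(\sU)). \]

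To conclude, I would apply Quillen's dévissage theorem to show that $i_* : \Coh(\sZ) \to \Coh_\sZ(\sX)$ induces an equivalence on K-theory. Letting $\sI \subseteq \sO_\sX$ denote the quasi-coherent ideal sheaf cutting out the reduced closed substack $\sZ_\red$, each $\sF \in \Coh_\sZ(\sX)$ satisfies $\sI^N\sF = 0$ for $N$ sufficiently large (verified smooth-locally on affine charts via noetherianity), and the decreasing filtration $\sF \supseteq \sI\sF \supseteq \sI^2\sF \supseteq \cdots \supseteq \sI^N\sF = 0$ has successive quotients annihilated by $\sI$, hence in the essential image of $i_*$. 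The main obstacle is essentially bureaucratic: transferring the classical scheme-theoretic arguments (coherent extension across $j$ and ideal-power dévissage) to algebraic stacks, which goes through thanks to the local noetherianity of $\Qcoh(\sX)$ and reduction to a smooth affine cover. As an alternative one could apply \thmref{thm:abstract localization} to ind-completions of $\Dcoh$, but identifying the compact objects requires the same dévissage input and passage through the heart via \propref{prop:G Quillen} is more transparent.
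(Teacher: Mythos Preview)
Your proposal is correct and follows essentially the same route as the paper: reduce via \propref{prop:G Quillen} to Quillen K-theory of the abelian category of coherent sheaves, then invoke Quillen's localization and d\'evissage theorems. The paper's proof is a one-line citation of these results, whereas you have spelled out the verifications (coherent extension across $j$, ideal-power filtration for d\'evissage) that make them applicable to noetherian algebraic stacks; your preliminary reduction to classical truncations via \corref{cor:G derived invariance} is harmless but not strictly needed, since $\Coh(\sX)$ is already insensitive to the derived structure.
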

    \begin{proof}
      By \propref{prop:G Quillen}, this follows from Quillen's dévissage and localization theorems \cite[Sect.~5, Thms.~4 and 5]{QuillenK}.
    \end{proof}

    \begin{cor}[Nil invariance]\label{cor:G nil-invariance}
      Let $i : Z \to X$ be a surjective closed immersion of noetherian \dA stacks.
      Then the induced morphism
      \[ i_* : \G(\sZ) \to \G(\sX) \]
      is invertible.
    \end{cor}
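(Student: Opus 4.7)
The plan is to invoke the localization triangle of \thmref{thm:G localization} and observe that the open complement contributes nothing.

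More precisely, let $j : \sU \to \sX$ denote the open immersion complementary to $i$. Since $i$ is surjective on underlying topological spaces, the classical truncation $\sU_\cl$ is empty. Then \thmref{thm:G localization} produces an exact triangle of spectra
\[
  \G(\sZ) \xrightarrow{i_*} \G(\sX) \xrightarrow{j^*} \G(\sU),
\]
so it suffices to show $\G(\sU) \simeq 0$. By the derived invariance of G-theory (\corref{cor:G derived invariance}), $\G(\sU) \simeq \G(\sU_\cl) = \G(\emptyset)$, and the latter is zero since $\Dcoh(\emptyset)$ is the zero \inftyCat. Thus $i_*$ is invertible.

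There is essentially no obstacle, since both ingredients are already available: the localization triangle is \thmref{thm:G localization}, and the reduction of $\sU$ to its empty classical truncation is handled by \corref{cor:G derived invariance}. The only point worth flagging is that ``surjective closed immersion'' here refers to the underlying topological map, so one genuinely needs derived invariance to rule out any derived structure on $\sU$ that might carry nontrivial coherent sheaves.
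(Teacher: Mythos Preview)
Your proof is correct and matches the paper's intended argument: the corollary is placed immediately after \thmref{thm:G localization} with no further proof given, so the localization triangle with empty complement is exactly what is meant. One small remark: your appeal to \corref{cor:G derived invariance} is unnecessary, since open substacks of a derived stack are in bijection with open subsets of the underlying topological space, so the complement $\sU$ is already the empty stack (not merely one with empty classical truncation); there is no derived structure on $\emptyset$ to worry about.
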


  \subsection{Excision}

    \begin{thm}[Étale excision]\label{cor:G Nis}
      Let $\sX$ be a noetherian \dA stack.
      Let $j : \sU \to \sX$ be an open immersion, and $Z \sub \abs{\sX}$ the set-theoretic complement.
      Then for any étale neighbourhood $f : \sX' \to \sX$ of $Z$ (i.e., $f$ is an étale morphism which restricts to an isomorphism $f^{-1}(Z)_\red \to Z_\red$), the induced square
      \[ \begin{tikzcd}
        \G(\sX) \ar{r}\ar{d}
          & \G(\sU) \ar{d}
        \\
        \G(\sX') \ar{r}
          & \G(f^{-1}(\sU))
      \end{tikzcd} \]
      is cartesian.
    \end{thm}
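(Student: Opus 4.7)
The plan is to apply the localization theorem (Theorem \ref{thm:G localization}) to both rows of the square and then show that the induced map of horizontal fibres is an isomorphism. Equip the set-theoretic complement $Z \sub \abs{\sX}$ with a classical closed substack structure (say the reduced induced one), giving a closed immersion $i : Z \to \sX$, and let $f^{-1}(Z) := Z \fibprod_\sX \sX'$ denote the derived fibre product, which is a closed substack of $\sX'$ whose open complement is $f^{-1}(\sU)$.

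By \thmref{thm:G localization}, the horizontal fibres of the square are respectively $\G(Z)$ and $\G(f^{-1}(Z))$, and the induced map between them is pullback along the projection $f|_{f^{-1}(Z)} : f^{-1}(Z) \to Z$. (Note that $f|_{f^{-1}(Z)}$ is étale, hence flat, so the pullback on G-theory is defined.) Thus to conclude cartesianness of the square, it suffices to verify that this pullback map $\G(Z) \to \G(f^{-1}(Z))$ is invertible.

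For this, I would reduce along the tower of closed immersions
\[ f^{-1}(Z)_\red \hook f^{-1}(Z)_\cl \hook f^{-1}(Z). \]
By derived invariance (\corref{cor:G derived invariance}), the inclusion $f^{-1}(Z)_\cl \hook f^{-1}(Z)$ induces an isomorphism on G-theory; by nil-invariance (\corref{cor:G nil-invariance}), so does $f^{-1}(Z)_\red \hook f^{-1}(Z)_\cl$. After these identifications, the pullback becomes the map $\G(Z) \to \G(f^{-1}(Z)_\red)$ induced by $f^{-1}(Z)_\red \to Z$, which is an isomorphism of reduced classical stacks by the étale neighbourhood hypothesis. Hence the pullback is invertible.

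The only potentially nontrivial point is compatibility of the identifications with the maps in the square: the localization fibre sequence is natural in pullback along flat morphisms, and the derived/nil invariance isomorphisms are realized by direct image along closed immersions, which is compatible with flat base change by \propref{prop:G base change}. So the induced map of fibres really is the one computed above, and the argument goes through.
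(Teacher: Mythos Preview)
Your proof is correct and follows the same strategy as the paper's one-line argument: identify the horizontal fibres via the localization triangle (\thmref{thm:G localization}) and show the induced map between them is an isomorphism. One simplification: since $f$ is étale and you chose $Z$ reduced classical, the derived fibre $f^{-1}(Z)$ is automatically classical (flat base change of a discrete sheaf is discrete) and reduced (étale over reduced is reduced, in the noetherian setting), so $g : f^{-1}(Z) \to Z$ is already the isomorphism $f^{-1}(Z)_\red \to Z_\red$ from the hypothesis and your detour through derived/nil-invariance---together with the somewhat delicate compatibility check in the last paragraph---is unnecessary.
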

    \begin{proof}
      By \thmref{thm:G localization}, the horizontal fibres are isomorphic.
    \end{proof}

    \begin{thm}[Proper co-excision]\label{thm:proper co-excision G}
      Let $\sX$ be a noetherian \dA stack.
      Let $i : \sZ \to \sX$ be a closed immersion with open complement $j: \sU \to \sX$.
      Then for any proper morphism $f : \sX' \to \sX$ which restricts to an isomorphism $f^{-1}(\sU)_\red \to \sU_\red$, the induced square
      \[ \begin{tikzcd}
        \G(f^{-1}(\sZ)) \ar{r}\ar{d}
        & \G(\sX') \ar{d}{f_*}
        \\
        \G(\sZ) \ar{r}{i_*}
        & \G(\sX)
      \end{tikzcd} \]
      is cocartesian (and hence cartesian).
    \end{thm}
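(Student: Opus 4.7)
The plan is to compare the G-theoretic localization sequences of Theorem~\ref{thm:G localization} for $\sX$ and $\sX'$ via pushforward along $f$, and to use nil-invariance (Corollary~\ref{cor:G nil-invariance}) to show that the induced map on the open complements is invertible. Cocartesianness of the left-hand square will then follow formally from the stability of the ambient $\infty$-category of spectra.

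First I would apply Theorem~\ref{thm:G localization} to the closed/open decompositions $\sZ \hook \sX \hookleftarrow \sU$ and $f^{-1}(\sZ) \hook \sX' \hookleftarrow f^{-1}(\sU)$, producing two exact triangles of spectra. Pushforward along the proper morphism $f$ (tacitly assumed of finite cohomological dimension, so that $f_*$ is defined on G-theory as in the discussion preceding Proposition~\ref{prop:G projection}) and its base change along $i$ assemble these into a commutative diagram
\[
\begin{tikzcd}
\G(f^{-1}(\sZ))\ar{r}\ar{d}
  & \G(\sX')\ar{r}{j'^*}\ar{d}{f_*}
  & \G(f^{-1}(\sU))\ar{d}{(f|_{f^{-1}(\sU)})_*}
\\
\G(\sZ)\ar{r}{i_*}
  & \G(\sX)\ar{r}{j^*}
  & \G(\sU),
\end{tikzcd}
\]
whose rows are fibre sequences. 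Commutativity of the right-hand square is an instance of the flat base change formula of Proposition~\ref{prop:G base change}, applied to the homotopy cartesian square formed by $f$ and the open immersion $j$ (which is of Tor-amplitude $\le 0$).

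Second, I would verify that the right-hand vertical arrow $(f|_{f^{-1}(\sU)})_*$ is an equivalence. By hypothesis the proper morphism $f|_{f^{-1}(\sU)} : f^{-1}(\sU) \to \sU$ is an isomorphism after passage to classical reductions. Corollary~\ref{cor:G nil-invariance} yields equivalences $\G(f^{-1}(\sU)_\red) \simeq \G(f^{-1}(\sU))$ and $\G(\sU_\red) \simeq \G(\sU)$ via the reduction closed immersions; functoriality of $f_*$ together with the isomorphism $f^{-1}(\sU)_\red \simeq \sU_\red$ then forces $(f|_{f^{-1}(\sU)})_*$ to be invertible as well.

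Finally, a map of fibre sequences of spectra whose third column is invertible produces a homotopy cartesian square on the remaining two columns, and in the stable setting such a square is automatically cocartesian, which is exactly the claim. The only genuine obstacle I see is the book-keeping around finite cohomological dimension of the base changes of $f$ (needed so that all the direct image functors above are defined and compatible with base change); this is implicit in the standing assumptions on $f$ allowing $f_*$ to exist on G-theory.
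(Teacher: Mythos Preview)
Your argument is correct and is exactly the approach the paper takes: the one-line proof there (``By \thmref{thm:G localization}, the horizontal cofibres are isomorphic'') is precisely your comparison of localization triangles together with nil-invariance on the open part. Your explicit use of base change (\propref{prop:G base change}) for commutativity and your remark about the implicit finite-cohomological-dimension hypothesis on $f$ are bookkeeping the paper leaves to the reader.
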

    \begin{proof}
      By \thmref{thm:G localization}, the horizontal cofibres are isomorphic.
    \end{proof}

  \subsection{Projective bundles and blow-ups}

    We have the following G-theory analogues of Theorems~\ref{thm:K projbun} and \ref{thm:K blow}.

    \begin{thm}\label{thm:G projbun}
      Let $\sX$ be a noetherian \dA stack, $\sE$ a locally free sheaf on $\sX$ of rank $n+1$, $n\ge 0$, and $q : \P_\sX(\sE) \to X$ the associated projective bundle.
      Then the maps
      \[
        \G(X) \to \G(\P_\sX(\sE)),
        \quad x \mapsto q^*(x) \cup [\sO(-k)]
      \]
      induce an isomorphism
      \[
        \G(\P_\sX(\sE)) \simeq \bigoplus_{k=0}^{n} \G(\sX).
      \]
    \end{thm}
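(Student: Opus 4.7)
The plan is to mirror the proof of the K-theory analogue (\thmref{thm:K projbun}): produce a semiorthogonal decomposition of $\Dcoh(\P_\sX(\sE))$ into $n+1$ copies of $\Dcoh(\sX)$, each embedded via one of the twisted pullbacks $\phi_k : x \mapsto q^*(x) \otimes \sO(-k)$, and then read off the direct sum decomposition in G-theory from additivity (\corref{cor:additivity}).

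First I would check that each $\phi_k$ sends $\Dcoh(\sX)$ into $\Dcoh(\P_\sX(\sE))$. This is immediate: $q$ is smooth (in particular of Tor-amplitude $0$ and of finite presentation), so $q^*$ preserves coherence, and tensoring with the line bundle $\sO(-k) \in \Dperf(\P_\sX(\sE))$ preserves coherence as well. Next I would upgrade the Orlov--Thomason semiorthogonal decomposition of $\Dperf(\P_\sX(\sE))$, proved in the \scr setting in \cite[Thm.~B]{KhanKblow} and used for \thmref{thm:K projbun}, to one of $\Dcoh(\P_\sX(\sE))$. The cleanest route is to observe that the same Beilinson-resolution argument produces a semiorthogonal decomposition on all of $\Dqc(\P_\sX(\sE))$ with components $q^*\Dqc(\sX) \otimes \sO(-k)$, and then to show that this restricts to $\Dcoh(\P_\sX(\sE))$. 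For this it suffices to know that both the inclusions $\phi_k$ and their right adjoints $q_*(\sO(k) \otimes -)$ preserve coherence: the inclusions were just handled, while the right adjoints are controlled by the fact that $q$ is proper and schematic (hence $\R q_*$ preserves coherent complexes) and that $\sO(k)$ is perfect.

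With the SOD $\Dcoh(\P_\sX(\sE)) = \langle \phi_0(\Dcoh(\sX)), \ldots, \phi_n(\Dcoh(\sX)) \rangle$ in hand, \corref{cor:additivity} applied to $\G(\P_\sX(\sE)) = \K(\Dcoh(\P_\sX(\sE)))$ delivers the asserted splitting $\G(\P_\sX(\sE)) \simeq \bigoplus_{k=0}^{n} \G(\sX)$, with the $k$th summand identified via $\phi_k$.

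The main obstacle is the bookkeeping involved in extending the Orlov--Thomason SOD from $\Dperf$ to $\Dcoh$; the two key preservation facts listed above are the substantive content, the rest being formal. As a fallback, one can sidestep this altogether by reducing to the classical case via derived invariance (\corref{cor:G derived invariance})---using that $q$ is flat, so the classical truncation of $\P_\sX(\sE)$ is $\P_{\sX_\cl}(\sE|_{\sX_\cl})$---and then invoking the classical projective bundle formula for G-theory (which in turn rests on the same kind of SOD argument, now at the level of abelian categories of coherent sheaves à la Quillen).
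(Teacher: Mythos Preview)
Your proposal is correct and matches the paper's proof essentially verbatim: the paper also invokes the semiorthogonal decomposition on $\Dqc(\P_\sX(\sE))$ from \cite[Thm.~3.3]{KhanKblow}, asserts that it restricts to coherent complexes, and concludes via \corref{cor:additivity}. Your explanation of \emph{why} the restriction to $\Dcoh$ works (preservation of coherence by both $\phi_k$ and its right adjoint $q_*(\sO(k)\otimes -)$) supplies detail the paper leaves implicit.
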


    \begin{proof}
      The standard semi-orthogonal decomposition on $\Dqc(\P_\sX(\sE))$ (see \cite[Thm.~3.3]{KhanKblow}) restricts to coherent complexes, so the formula follows from \corref{cor:additivity}.
    \end{proof}

    \begin{thm}\label{thm:G blow}
      Let $\sX$ be a noetherian \dA stack.
      For any quasi-smooth closed immersion $i : \sZ \to \sX$ of virtual codimension $n$, consider the blow-up square
      \begin{equation}\label{eq:aosufnq}
        \begin{tikzcd}
          \P_\sZ(\sN_{\sZ/\sX}) \ar{r}{i_D}\ar{d}{q}
          & \Bl_\sZ\sX\ar{d}{p}
          \\
          \sZ\ar{r}{i}
          & \sX
        \end{tikzcd}
      \end{equation}
      Then the maps $p^* : \G(\sX) \to \G(\Bl_\sZ\sX)$ and 
      \[
        \G(\sZ) \to \G(\Bl_\sZ\sX),
        \quad x\mapsto i_{D,*}\left(q^*(x) \cup [\sO(-k)]\right),
      \]
      induce an isomorphism
      \[
        \G(\Bl_\sZ\sX) \simeq \G(\sX) \oplus \bigoplus_{k=1}^{n-1} \G(\sZ).
      \]
    \end{thm}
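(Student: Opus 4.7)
The plan is to mirror exactly the strategy used for the K-theoretic statement (\thmref{thm:K blow}) and the G-theoretic projective bundle formula (\thmref{thm:G projbun}): leverage the Orlov--Thomason semi-orthogonal decomposition of $\Dqc(\Bl_\sZ\sX)$ from \cite[Thm.~C]{KhanKblow}, verify that it restricts to coherent complexes, and then invoke \corref{cor:additivity}. Recall that the decomposition in question identifies $\Dqc(\Bl_\sZ\sX)$ with a semi-orthogonal gluing of $\Dqc(\sX)$ (via $p^*$) and $n-1$ copies of $\Dqc(\sZ)$ (via the fully faithful embeddings $\sF \mapsto i_{D,*}(q^*(\sF) \otimes \sO(-k))$, for $k=1,\ldots,n-1$).

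The only real content is checking that each of these embedding functors, as well as the corresponding projection/mutation functors, preserve coherent complexes. For the embeddings this is straightforward: the projection $p : \Bl_\sZ\sX \to \sX$ is quasi-smooth of relative virtual dimension $0$, hence of finite Tor-amplitude by \lemref{lem:qsm fTa}, so $\L p^*$ preserves $\Dcoh$; the projection $q : \P_\sZ(\sN_{\sZ/\sX}) \to \sZ$ is smooth; tensoring with the line bundle $\sO(-k)$ preserves coherence; and $i_{D,*}$ preserves coherence because $i_D$ is a closed immersion (in particular proper representable and of cohomological dimension zero). The corresponding projection functors involve pullbacks along the quasi-smooth morphism $i_D$ and pushforwards along the proper, representable, finite-Tor-amplitude morphisms $p$ and $q$, all of which preserve coherence by the same considerations (together with \remref{rem:Perf in Coh} applied as needed on the bounded parts).

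With the restriction of the semi-orthogonal decomposition to $\Dcoh(\Bl_\sZ\sX)$ in hand, \corref{cor:additivity} immediately yields the claimed direct sum splitting of $\G(\Bl_\sZ\sX)$ into $\G(\sX)$ and $n-1$ copies of $\G(\sZ)$, with the identifications given by the formulas in the statement.

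I do not expect any essential obstacle: the machinery of \cite{KhanKblow} was already set up at the level of $\Dqc$ of derived stacks, and each of the relevant morphisms ($p$, $q$, $i_D$) comes equipped with precisely the finiteness properties needed to ensure that both halves of the semi-orthogonal decomposition restrict cleanly to the coherent subcategory. The proof is therefore a direct transcription of \cite[Cor.~4.4]{KhanKblow} with $\Dperf$ replaced by $\Dcoh$.
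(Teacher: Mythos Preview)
Your proposal is correct and follows essentially the same approach as the paper: invoke the semi-orthogonal decomposition of $\Dqc(\Bl_\sZ\sX)$ from \cite{KhanKblow}, note that it restricts to coherent complexes, and apply \corref{cor:additivity}. The paper's proof is the two-line version of what you wrote; your added verification that the embedding and projection functors preserve coherence is a reasonable elaboration of the clause ``restricts to coherent complexes.''
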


    \begin{proof}
      The standard semi-orthogonal decomposition on $\Dqc(\Bl_\sZ\sX)$ (see \cite[Thm.~4.3]{KhanKblow}) restricts to coherent complexes.
      Thus the formula follows from \corref{cor:additivity}.
    \end{proof}

    \begin{cor}
      Let $\sX$ be a noetherian \dA stack.
      For any quasi-smooth closed immersion $i : \sZ \to \sX$, the blow-up square \eqref{eq:aosufnq} induces a cartesian square
      \[ \begin{tikzcd}
        \G(\sX) \ar{r}{i^*}\ar{d}{p^*}
        & \G(\sZ) \ar{d}{q^*}
        \\
        \G(\Bl_\sZ\sX) \ar{r}{i_D^*}
        & \G(\P_\sZ(\sN_{\sZ/\sX})).
      \end{tikzcd} \]
    \end{cor}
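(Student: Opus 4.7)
The plan is to combine \thmref{thm:G projbun} and \thmref{thm:G blow} in the same spirit as the K-theoretic analog following \thmref{thm:K blow}. I expect the argument to reduce to a small linear-algebra verification in the explicit bases provided by the two decomposition theorems.

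First I would use the two theorems to identify
\[ \G(\Bl_\sZ\sX) \simeq \G(\sX) \oplus \bigoplus_{k=1}^{n-1} \G(\sZ), \qquad \G(\P_\sZ(\sN_{\sZ/\sX})) \simeq \bigoplus_{k=0}^{n-1} \G(\sZ), \]
where on the left the $\G(\sX)$-summand is embedded via $p^*$ and the remaining summands via $x \mapsto i_{D,*}(q^*(x) \otimes \sO(-k))$, and on the right the $k$-th summand is embedded via $y \mapsto q^*(y) \otimes \sO(-k)$.

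Next, I would compute the behaviour of $i_D^*$ in these bases. On the $p^*$-summand one has $i_D^* \circ p^* = q^* \circ i^*$: the blow-up square is cartesian on classical truncations, and by derived invariance (\corref{cor:G derived invariance}) G-theory does not see the failure of the derived square to be homotopy cartesian. On the remaining summands, the self-intersection formula for the virtual Cartier divisor $i_D$, combined with the identification of its conormal sheaf with $\sO_{\P_\sZ(\sN_{\sZ/\sX})}(1)$ (see \cite{KhanRydh,KhanKblow}), gives $i_D^* i_{D,*}(\sG) \simeq \sG \oplus \sG \otimes \sO(1)[1]$ via a Koszul-type resolution, and hence
\[ i_D^* i_{D,*}(q^*(x) \otimes \sO(-k)) \simeq (q^*(x) \otimes \sO(-k)) \oplus (q^*(x) \otimes \sO(-k+1)[1]). \]

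Finally, the cartesianness of the square reduces to a direct verification: in the chosen bases, $i_D^*$ restricts on the $(n-1)$-fold $\G(\sZ)$-summands to an upper-bidiagonal ``staircase'' with unit entries, so the equation $q^*(z) = i_D^*(w)$ forces the $\G(\sZ)$-components of $w$ to vanish inductively from the top row, leaving $w = p^*(x)$ and $z = i^*(x)$ for a unique $x \in \G(\sX)$. The main subtle point is setting up the self-intersection formula in the quasi-smooth setting, but this is standard once one knows that $i_D$ is a virtual Cartier divisor with the explicit conormal above.
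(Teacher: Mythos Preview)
Your approach is exactly the paper's: combine \thmref{thm:G projbun} and \thmref{thm:G blow}, which is all the paper says. One minor correction: the identity $i_D^* \circ p^* \simeq q^* \circ i^*$ is simply functoriality of inverse image applied to the \emph{commutative} square \eqref{eq:aosufnq} (i.e., $p \circ i_D \simeq i \circ q$), and needs no appeal to derived invariance or classical cartesianness.
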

    \begin{proof}
      Combine Theorems~\ref{thm:G projbun} and \ref{thm:G blow}.
    \end{proof}

  \subsection{Homotopy invariance}

    Let $\sX$ be a noetherian \dA stack.
    Let $\sE$ be a locally free sheaf of finite rank, $\phi : \sE \twoheadrightarrow \sO_\sX$ a surjective\footnote{%
      on $\pi_0$
    } $\sO_\sX$-module homomorphism, and $\sF$ its fibre (which is locally free of finite rank).
    The associated \emph{affine bundle}
    \[ \pi : \bV_\sX(\sE,\phi) \to \sX \]
    is the moduli of splittings of $\phi$.
    It fits in a closed/open pair
    \[ \P_\sX(\sF^\vee) \hook \P_\sX(\sE^\vee) \hookleftarrow \bV_\sX(\sE,\phi) \]
    and it is a torsor under the vector bundle $\bV_\sX(\sF^\vee) \to \sX$.
    Every vector bundle torsor arises via this construction, see e.g. the proof of \cite[Thm.~4.1]{ThomasonEquiv}.

    \begin{thm}\label{thm:G htp}
      Let $\sX$ be a noetherian \dA stack.
      For any $(\sE, \phi)$ as above, let $\pi : \bV_\sX(\sE,\phi) \to \sX$ denote the associated affine bundle.
      Then the inverse image map
      \begin{equation*}
        \pi^* : \G(\sX) \to \G(\bV_\sX(\sE,\phi))
      \end{equation*}
      is invertible.
    \end{thm}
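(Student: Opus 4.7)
The plan is to realize the affine bundle as the open complement of the projective subbundle $i : \P_\sX(\sF^\vee) \hookrightarrow \P_\sX(\sE^\vee)$ (as in the setup before the theorem), and then combine the G-theoretic localization triangle of \thmref{thm:G localization} applied to $i$ and to the complementary open immersion $j : \bV_\sX(\sE,\phi) \hookrightarrow \P_\sX(\sE^\vee)$ with the projective bundle formula of \thmref{thm:G projbun} applied to both $\P_\sX(\sE^\vee)$ and $\P_\sX(\sF^\vee)$.

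The key geometric input is that $i$ is a virtual Cartier divisor cut out by the section of $\sO(1)$ obtained by composing $\sO(-1) \hookrightarrow q^*\sE \xrightarrow{q^*\phi} \sO$, where $q : \P_\sX(\sE^\vee) \to \sX$ denotes the structural projection. Two consequences follow. First, the associated Koszul resolution $\sO(-1) \to \sO \to i_*\sO_{\P_\sX(\sF^\vee)}$ combined with the projection formula (\propref{prop:G projection}) yields $[i_*(\sO_{\P_\sX(\sF^\vee)}(-k))] = [\sO(-k)] - [\sO(-k-1)]$ in $\G(\P_\sX(\sE^\vee))$. Second, the defining section canonically trivializes $\sO(-1)$ on the open complement $\bV_\sX(\sE,\phi)$, whence $j^*[\sO(-k)] = 1$ for every $k$.

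Substituting the decompositions of \thmref{thm:G projbun} into the localization triangle converts it into
\[ \bigoplus_{k=0}^{n-1} \G(\sX) \xrightarrow{T} \bigoplus_{k=0}^{n} \G(\sX) \xrightarrow{j^*} \G(\bV_\sX(\sE,\phi)), \]
where, by the formula above, $T$ is the ``telescoping'' map sending the generator of the $k$-th summand to $e_k - e_{k+1}$, while by the trivialization of the $\sO(-k)$'s the map $j^*$ factors as $\pi^* \circ \Sigma$ through the sum map $\Sigma : \bigoplus_{k=0}^n \G(\sX) \to \G(\sX)$. It remains to identify the cofibre of $T$ with $\G(\sX)$ via $\Sigma$: the inclusion $\iota_0$ of the zeroth factor splits $\Sigma$, and the difference $\id - \iota_0 \circ \Sigma$ factors through $T$ via the natural map $(a_0,\ldots,a_n) \mapsto (-(a_{k+1} + \cdots + a_n))_{k=0}^{n-1}$. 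Plugging this identification back in, the resulting equivalence $\G(\sX) \simeq \G(\bV_\sX(\sE,\phi))$ is precisely $\pi^*$.

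The step I expect to need the most care is making these identifications at the spectrum level rather than only on homotopy groups. The formula for $i_*$ must hold as a map of spectra, which follows from the functoriality of the decomposition in \thmref{thm:G projbun} together with the projection formula; and the factorization $\id - \iota_0 \circ \Sigma = T \circ s$ must be realized by natural transformations of spectra, which is achieved by writing $s$ as an explicit $\Z$-linear combination of the structural inclusions and projections of the direct sums. Both checks are formal but provide the only nontrivial content beyond the localization and projective bundle inputs.
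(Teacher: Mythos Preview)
Your proposal is correct and follows exactly the approach the paper takes: combine the localization triangle for the closed/open pair $\P_\sX(\sF^\vee) \hook \P_\sX(\sE^\vee) \hookleftarrow \bV_\sX(\sE,\phi)$ with the projective bundle formula on both projective bundles. The paper's own proof is a single sentence to this effect, so your version is a fully worked-out expansion of the same argument; the explicit identification of $i_*$ as the telescoping map via the Koszul resolution and of the cofibre via the sum map are precisely the details one has to supply, and you have them right.
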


    \begin{proof}
      Note that $\pi$ is smooth, so the map exists.
      Combine the localization triangle (\thmref{thm:G localization})
      \begin{equation*}
        \G(\P_\sX(\sF^\vee)) \to \G(\P_\sX(\sE^\vee)) \to \G(\bV_\sX(\sE,\phi))
      \end{equation*}
      with the projective bundle formula (\thmref{thm:G projbun}).
    \end{proof}

    For any finite locally free sheaf $\sE$, the affine bundle associated to the projection $\sE \oplus \sO_\sX \twoheadrightarrow \sO_\sX$ is the vector bundle $\bV_\sX(\sE^\vee)$ classifying sections $\sO_\sX \to \sE$.
    Therefore we in particular get homotopy invariance for vector bundles.

    \begin{cor}\label{cor:G htp vb}
      Let $\sX$ be a noetherian \dA stack.
      For any finite locally free sheaf $\sE$ on $\sX$, let $\pi : \bV_\sX(\sE) \to \sX$ denote the vector bundle parametrizing cosections of $\sE$.
      Then the inverse image map
      \begin{equation*}
        \pi^* : \G(\sX) \to \G(\bV_\sX(\sE))
      \end{equation*}
      is invertible.
    \end{cor}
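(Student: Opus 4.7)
The plan is to deduce this immediately from \thmref{thm:G htp} by identifying $\bV_\sX(\sE)$ as a particular affine bundle in the sense of that theorem. The key observation is already sketched in the paragraph preceding the corollary: a vector bundle is itself an affine bundle, associated to a split surjection onto $\sO_\sX$.

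First, I would set $\sE' := \sE^\vee \oplus \sO_\sX$ and let $\phi : \sE' \twoheadrightarrow \sO_\sX$ be the projection onto the second factor. Clearly $\phi$ is a surjective $\sO_\sX$-module homomorphism (even termwise split), and its fibre is the locally free sheaf $\sE^\vee$. Thus the pair $(\sE', \phi)$ satisfies the hypotheses of \thmref{thm:G htp}.

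Next, I would identify the associated affine bundle $\bV_\sX(\sE', \phi)$, which is the moduli of splittings of $\phi$. A splitting $s : \sO_\sX \to \sE^\vee \oplus \sO_\sX$ of $\phi$ is determined by its first component, an arbitrary section $\sO_\sX \to \sE^\vee$, i.e., a cosection $\sE \to \sO_\sX$. By functor-of-points comparison, this yields a canonical isomorphism $\bV_\sX(\sE',\phi) \simeq \bV_\sX(\sE)$ over $\sX$, compatibly with the projections to $\sX$.

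Applying \thmref{thm:G htp} then gives that $\pi^* : \G(\sX) \to \G(\bV_\sX(\sE))$ is invertible. There is no real obstacle here: the whole content is the identification of $\bV_\sX(\sE)$ as the affine bundle of splittings of a trivial quotient, which is essentially the tautological description of a vector bundle as a torsor under itself with a canonical section.
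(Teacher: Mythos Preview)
Your proposal is correct and follows exactly the approach indicated in the paper: the paragraph preceding the corollary observes that the affine bundle associated to the projection $\sE \oplus \sO_\sX \twoheadrightarrow \sO_\sX$ is $\bV_\sX(\sE^\vee)$, and you simply apply this with $\sE$ replaced by $\sE^\vee$ to land on $\bV_\sX(\sE)$ and invoke \thmref{thm:G htp}.
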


    We can also generalize this further to vector bundle stacks.

    \begin{cor}\label{cor:G htp vb stack}
      Let $\sX$ be a noetherian \dA stack.
      Assume that $\sX$ has affine stabilizers.
      For any perfect complex $\sE$ on $\sX$ of Tor-amplitude $\le 0$, let $\pi : \bV_\sX(\sE) \to \sX$ denote the vector bundle stack parametrizing cosections of $\sE$.
      Then the inverse image map
      \[ \pi^* : \G(\sX) \to \G(\bV_\sX(\sE)) \]
      is invertible.
    \end{cor}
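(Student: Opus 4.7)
The plan is to reduce to the vector bundle case \corref{cor:G htp vb} by devissage on the cohomological amplitude of $\sE$. A perfect complex of Tor-amplitude $\le 0$ has $\pi_i(\sE) = 0$ for all $i > 0$, so cohomologically it is concentrated in degrees $[0, n]$ for some $n \ge 0$; smooth-locally on $\sX$ it admits a strict presentation by a bounded complex of finite locally free sheaves in those cohomological degrees. I would induct on $n$; the case $n = 0$ is exactly \corref{cor:G htp vb}.

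For the inductive step, use the canonical truncation triangle
\[ \tau^{<n}\sE \to \sE \to V[-n] \]
in $\Dperf(\sX)$, where $V = \pi_{-n}(\sE)$ is smooth-locally a vector bundle and $\tau^{<n}\sE$ is cohomologically in $[0, n-1]$ (still perfect of Tor-amplitude $\le 0$). Applying the contravariant functor $\bV_\sX(-) = \Spec_\sX \Sym(-)$ yields a homotopy cartesian square of vector bundle stacks
\[ \begin{tikzcd}
  \bV_\sX(V[-n]) \ar{r}\ar{d} & \bV_\sX(\sE) \ar{d} \\
  \sX \ar{r}{0} & \bV_\sX(\tau^{<n}\sE),
\end{tikzcd} \]
exhibiting $\bV_\sX(\sE) \to \bV_\sX(\tau^{<n}\sE)$ as a torsor under the pullback of the iterated classifying stack $\bV_\sX(V[-n]) \simeq B^n \bV_\sX(V^\vee)$. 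By the inductive hypothesis applied to $\tau^{<n}\sE$, pullback induces an iso $\G(\sX) \xrightarrow{\sim} \G(\bV_\sX(\tau^{<n}\sE))$; combined with base change (\propref{prop:G base change}), it suffices to show $\G(\sX) \xrightarrow{\sim} \G(B^n\bV_\sX(V^\vee))$ for every vector bundle $V$ on $\sX$.

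This latter identity reduces, by a further induction on $n$, to the base case $n = 1$: $\G(\sX) \simeq \G(B\bV_\sX(V^\vee))$ for a vector bundle $V$. I would establish this using \propref{prop:G Quillen} together with the standard fact that every $\bV_\sX(V^\vee)$-equivariant coherent sheaf on $\sX$ admits a finite filtration whose associated graded is pulled back trivially from $\sX$ (using that $\bV_\sX(V^\vee)$ is a split unipotent abelian group scheme, whose rational representations admit such filtrations in any characteristic), then invoking Quillen's dévissage theorem.

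The main obstacle is globalizing the smooth-local presentation of $\sE$ and the smooth-local triviality of $\pi_{-n}(\sE)$, so that the truncation triangle and its associated torsor structure exist globally on $\sX$. This is handled by étale excision for G-theory (\thmref{cor:G Nis}) combined with the local structure theory available under the affine-stabilizers hypothesis (\examref{exam:ANS}), which allows the inductive step to be carried out locally and then assembled.
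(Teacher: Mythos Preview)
There is a genuine gap: the canonical truncation does not do what you need. The top cohomology sheaf $V = \pi_{-n}(\sE)$ of a perfect complex of Tor-amplitude $\le 0$ is \emph{not} a vector bundle, even smooth-locally. For instance, take $\sE = [\sO_{\A^1} \xrightarrow{t} \sO_{\A^1}]$ in cohomological degrees $[0,1]$; then $V = \sO_{\A^1}/(t)$ is the skyscraper at the origin. Consequently $\tau^{<n}\sE$ is not perfect (its cofibre $V[-n]$ is not), and the inductive hypothesis does not apply to it. Your ``main obstacle'' is therefore misdiagnosed: no amount of \'etale localization will make $\pi_{-n}(\sE)$ locally free.

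The paper's d\'evissage instead peels off a locally free \emph{term} of a global presentation rather than a cohomology sheaf: a map $\sE_0[-n] \to \sE$ with $\sE_0$ finite locally free and cofibre $\sE'$ of strictly smaller amplitude. Such a presentation exists once $\sX$ has the resolution property, and the paper reduces to that case not by \'etale excision but via the stratification of $\sX_\cl$ into locally closed substacks with the resolution property (from \cite[Prop.~2.6(i)]{HallRydhGroups}, which needs only affine stabilizers), combined with \corref{cor:G derived invariance} and \thmref{thm:G localization}. Incidentally, the classifying-stack case is also simpler than you propose: the quotient $\sigma : \sX \twoheadrightarrow B\bV_\sX(\sF)$ is itself the projection of a vector bundle, so $\sigma^*$ is invertible by \corref{cor:G htp vb}; since $\sigma^*\pi^* = \id$, it follows that $\pi^*$ is invertible, with no appeal to filtrations of equivariant sheaves or Quillen d\'evissage.
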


    \begin{proof}
      If $\sE$ is of Tor-amplitude $[0,0]$, then it is finite locally free and we are in the case of \corref{cor:G htp vb}.

      If $\sE$ is of Tor-amplitude $[-1,-1]$, then $E := \bV_\sX(\sE)$ is the classifying stack of the vector bundle $E[-1] := \bV_\sX(\sE[1])$.
      In this case, the canonical morphism $\sigma : X \twoheadrightarrow [X/E[-1]] \simeq E$ is the projection of a vector bundle (it is the quotient by $E[-1]$ of the projection $E[-1] \to X$).
      Thus $\sigma^*$ is invertible by \corref{cor:G htp vb}.
      Since $\sigma^*\pi^* = \id$, we deduce that $\pi^*$ is invertible.
      Repeating the same argument inductively also gives the case of Tor-amplitude $[-k,-k]$, for any $k\ge 0$.

      Finally let $\sE$ be of Tor-amplitude $[-k,0]$ for some $k>0$.
      Since $\sX$ has affine stabilizers, its classical truncation admits a stratification by stacks with the resolution property (see \cite[Prop.~2.6(i)]{HallRydhGroups}).
      By \corref{cor:G derived invariance} and \thmref{thm:G localization}, we may therefore assume that $\sX$ admits the resolution property.
      In this case we can find a morphism $\phi : \sE_0[-k] \to \sE$ with $\sE_0$ finite locally free, whose cofibre $\sE'$ is of Tor-amplitude $[-k+1,0]$.
      Write $E := \bV_\sX(\sE)$, $E' := \bV_\sX(\sE')$, and $E_0[k] := \bV_\sX(\sE_0[-k])$.
      The projection $\pi : E \to \sX$ factors through the morphism $\phi : E \to E_0[k]$ and the projection $\pi_0 : E_0[k] \to \sX$.
      Since $\phi$ is the projection of a vector bundle (it is the quotient by $E_0[k-1]$ of the projection $\pi' : E' \to \sX$), $\phi^*$ is invertible by the Tor-amplitude $[0,0]$ case.
      By the $[-k,-k]$ case above, $\pi_0^*$ is also invertible.
      Hence $\pi^* \simeq \phi^*\pi_0^*$ is invertible.
    \end{proof}

\section{Homotopy invariance and singularities}
\label{sec:KH}

  \subsection{Weibel's conjecture}
  \label{ssec:Weibel}

    Let $X$ be a noetherian algebraic space.
    If $X$ is regular, then combining Poincaré duality (\thmref{thm:Poincare}) with \propref{prop:G connective} and \corref{cor:G htp vb} yields:
    \begin{defnlist}
      \item 
      The negative K-groups $\K_{-n}(X)$ vanish for all $n>0$.

      \item 
      For any vector bundle $\pi : E \to X$, the map $\pi^* : \KB(X) \to \KB(E)$ is invertible.
    \end{defnlist}

    When $X$ is singular, both these properties fail.
    In fact, as we will see in the next subsection, all the ``pathological'' behaviour of algebraic K-theory for singular schemes, such as failure of nil-invariance and excision of open subsets (see Subsects.~\ref{ssec:K nil} and \ref{ssec:excising opens}) can be traced back to the failure of homotopy invariance.
    In particular, these excision statements will turn out to be true for nonsingular spaces.

    On the other hand, many of these pathologies disappear on ``low enough'' K-groups (see \propref{prop:K nil invariance} and \remref{rem:affine low}).
    This is in some sense ``explained'' by the following statement, which says that even for singular spaces, homotopy invariance does hold on low enough K-groups.

    \begin{thm}\label{thm:Weibel}
      Let $X$ be a noetherian algebraic space.
      Suppose that $X$ is pro-smooth over a noetherian algebraic space of Krull dimension $d$.
      Then we have:
      \begin{thmlist}
        \item\label{item:Weibel/van}
        The negative K-groups $\K_{-n}(X)$ vanish for all $n>d$.

        \item\label{item:Weibel/htp}
        For any vector bundle $\pi : E \to X$, the maps $\pi^* : \K_{-n}(X) \to \K_{-n}(E)$ are invertible for all $n\ge d$.
      \end{thmlist}
    \end{thm}

    \begin{exam}
      If $X$ is smooth (or pro-smooth) over a field, then it is regular and \thmref{thm:Weibel} just recovers the nonsingular situation discussed above.
      (Conversely, if we restrict our attention to affine schemes over a field, then by Popescu's desingularization theorem \cite[Thm.~1.1]{Spivakovsky}, regularity implies pro-smoothness.)
    \end{exam}

    \begin{exam}
      For $X$ a singular scheme of dimension $d$, \thmref{thm:Weibel} was known as Weibel's conjecture (formulated in \cite[Questions~2.9]{WeibelAnalytic} for $X$ affine).
      Weibel's conjecture was proven by Kerz--Strunk--Tamme \cite[Thm.~B]{KerzStrunkTamme} for schemes.
    \end{exam}

    \begin{rem}
      \thmref{thm:Weibel} also holds for Deligne--Mumford stacks, or more generally Artin stacks with quasi-finite diagonal; see \cite[Thm.~D, Rem.~5.3.3]{BachmannKhanRaviSosnilo}.
      In fact, there is also a statement for algebraic stacks with affine diagonal and nice stabilizers, but Krull dimension has to be replaced by a weaker bound.
    \end{rem}

    \begin{proof}[Proof of \thmref{thm:Weibel}]
      By assumption, there exists a noetherian algebraic space $S$ of dimension $d$ and a cofiltered system of smooth $S$-schemes $(X_\alpha)_\alpha$ with affine transition maps, whose limit is $X$.
      By continuity (\thmref{thm:contin K}) there are canonical isomorphisms
      \[ \K_{-n}(X) \simeq \colim_\alpha \K_{-n}(X_\alpha) \]
      for all $n\in\bZ$.
      Hence \itemref{item:Weibel/van} would follow from the analogous claims for $X_\alpha$.
      Similarly for \itemref{item:Weibel/htp}, since by \corref{cor:continlocfr} the vector bundle $\pi : E \to X$ descends to $X_\alpha$ for every $\alpha$.

      Thus we may assume that $X$ is smooth over $S$.
      In this case, the statement is proven by a slight variant on the proof of Weibel's conjecture.
      See \cite[Rem.~5.3.3]{BachmannKhanRaviSosnilo}, or \cite{Sadhu}\footnote{%
        Thanks to the referee for pointing out this reference.
      } in the case of schemes.
    \end{proof}

  \subsection{Homotopy invariant K-theory}
  \label{ssec:KH}

    We can introduce a variant of K-theory by \emph{forcing} homotopy invariance for all (possibly singular) spaces.
    This construction goes back to Weibel \cite{WeibelKH}.
    See \cite{KrishnaRavi,HoyoisKrishna,HoyoisKH} for the case of stacks, \cite{KhanKblow} for the case of derived algebraic spaces, and \cite{KhanRavi} for derived stacks.
    We restrict to algebraic spaces for simplicity of exposition.
    
    Let $\Delta^\bullet$ denote the standard cosimplicial affine scheme whose $n$th term is affine $n$-space $\A^n$ (see e.g. \cite[p.~45]{MorelVoevodsky}).
    For any qcqs derived algebraic space $X$, let $\KH(X)$ denote the geometric realization of the simplicial diagram $\KB(\Delta^\bullet)$:
    \[ \KH(X) = \lim_{[n]\in\bDelta^\op} \KB(X \times \Delta^n). \]
    The canonical map of presheaves $\KB \to \KH$ exhibits $\KH$ as the $\A^1$-localization of K-theory in the sense of $\A^1$-homotopy theory (see \cite{CisinskiKH}).

    \begin{rem}
      If $X$ has bounded structure sheaf then by the universal property of $\A^1$-localization and homotopy invariance for G-theory (\corref{cor:G htp vb}), we find that the canonical map $\KB(X) \to \G(X)$ (\thmref{thm:Poincare}) factors through a map
      \[ \KH(X) \to \G(X), \]
      functorial with respect to finite Tor-amplitude inverse images.
      If $X$ is regular, then so are $X\times \Delta^n$ for all $n$, so this map is invertible.
      Hence all the maps
      \[ \K(X) \to \KB(X) \to \KH(X) \to \G(X) \]
      are invertible when $X$ is regular.
    \end{rem}

    \begin{rem}\label{rem:KH basics}
      Since colimits of spectra are exact, $\KH$ inherits the following properties from $\KB$: localization sequence (\thmref{thm:K localization}), excision of closed subsets (\thmref{thm:K excision}), projective bundle formula (\thmref{thm:K projbun}), and blow-up formula (\thmref{thm:K blow}).
    \end{rem}

    \begin{rem}
      For any $\bZ$-linear stable \inftyCat $\sA$ we can define $\KH(\sA)$ as the spectrum
      \[ \KH(\sA) = \colim_{n\in\bDelta^\op} \KB(\sA \otimes \Dperf(\Delta^n)) \]
      so that $\KH(X) = \KH(\Dperf(X))$.
      Therefore $\KH$ also has cup products and direct images along proper morphisms that are almost of finite presentation and of finite Tor-amplitude.
      These satisfy the projection and base change formulas (Propositions~\ref{prop:K proj} and \ref{prop:K base change}).
    \end{rem}

  \subsection{Properties of \texorpdfstring{$\KH$}{KH}}

    As hinted at earlier, passing from $\KB$ to $\KH$ has the remarkable side effect of forcing several other properties that were only true in K-theory either for regular (nonsingular) stacks, or only for very low $K$-groups for affines.

    \begin{prop}[Homotopy invariance]
      For any qcqs derived algebraic space $X$ and any vector bundle $\pi : E \to X$, the map $\pi^* : \KH(X) \to \KH(E)$ is invertible.
    \end{prop}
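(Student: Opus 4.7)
The plan is to combine two structural features of $\KH$: it is $\A^1$-invariant by construction (as the Suslin--Voevodsky realization of $\KB$), and it satisfies Zariski descent on qcqs \da spaces, inherited from $\KB$ via \remref{rem:K descent} and \remref{rem:KH basics}. These two ingredients together reduce the problem to the trivial bundle case.

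First I would handle the case of a trivial bundle $\pi : X \times \A^n \to X$. By induction on $n$ this reduces to $n=1$. Identifying $\A^1$ with $\Delta^1$, the map $\pi^*$ becomes
\[
\colim_{[m] \in \bDelta^\op} \KB(X \times \Delta^m) \longrightarrow \colim_{[m] \in \bDelta^\op} \KB(X \times \Delta^m \times \Delta^1)
\]
of geometric realizations. This is the classical assertion that the Suslin--Voevodsky construction produces an $\A^1$-invariant presheaf: the zero section $X \hookrightarrow X \times \A^1$ provides the inverse, and the simplicial homotopy between the composition $X \times \A^1 \to X \to X \times \A^1$ and the identity is built out of the multiplication map $\A^1 \times \A^1 \to \A^1$ and the extra $\Delta^m$ factors in the bisimplicial colimit, exploiting the simplicial contractibility of $\Delta^1$.

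Next, for a general vector bundle $\pi : E \to X$, I would choose a Zariski open cover $\{U_i \hookrightarrow X\}$ trivializing $E$ and form the associated \v{C}ech nerve. Zariski descent for $\KH$ presents $\KH(X)$ and $\KH(E)$ as totalizations of the values of $\KH$ on the iterated intersections $U_{i_0} \fibprod_X \cdots \fibprod_X U_{i_k}$ and on their preimages under $\pi$. On each level of the nerve, the restriction of $\pi$ is a trivial vector bundle, so pullback is an equivalence by the first step. Totalizing yields that $\pi^*$ itself is an equivalence.

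The main obstacle lies in the first step: one must verify carefully that the bisimplicial colimit collapses so as to force $\A^1$-invariance. This is the genuine content of the proposition and ultimately rests on the simplicial contractibility of $\Delta^1$. The descent argument in the second step is then essentially formal, since every qcqs \da space is perfect and so satisfies Zariski descent for $\KB$ (and therefore for $\KH$, as this property is preserved by the geometric realization defining $\KH$).
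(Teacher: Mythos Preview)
Your proof is correct and follows essentially the same strategy as the paper: reduce to the trivial bundle via a local-to-global principle, then invoke $\A^1$-invariance of $\KH$, which holds by construction. The only cosmetic difference is that the paper cites \thmref{thm:K excision}\itemref{item:K excision etale} (Nisnevich excision) for the reduction, whereas you use Zariski descent; since vector bundles are already Zariski-locally trivial, your weaker tool is entirely sufficient.
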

    \begin{proof}
      Using \thmref{thm:K excision}\itemref{item:K excision etale}, we may assume the bundle is trivial.
      By induction we may assume it is of rank one.
      In that case this holds by construction.
    \end{proof}

    \begin{thm}[Nil-invariance]\label{thm:KH derived invariance}
      Let $i : Z \to X$ be a surjective closed immersion of qcqs derived algebraic spaces.
      Then the map $i^* : \KH(X) \to \KH(Z)$ is invertible.
    \end{thm}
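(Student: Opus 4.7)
The plan is to reduce to the affine case, control non-positive $\K$-groups via \propref{prop:K nil invariance} applied levelwise in the cosimplicial direction, and then invoke cdh descent for $\KH$ to upgrade to all degrees.

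Since $\KH$ inherits Nisnevich descent from $\KB$ (\remref{rem:K descent} combined with \remref{rem:KH basics}), and since a surjective closed immersion is a homeomorphism on underlying topological spaces---so that Nisnevich neighbourhoods of $X$ pull back compatibly to Nisnevich neighbourhoods of $Z$---we may reduce to the case where $X$ is an affine derived scheme. In that case each $X \times \Delta^n$ is again affine, and $Z \times \Delta^n \hookrightarrow X \times \Delta^n$ remains a surjective closed immersion. \propref{prop:K nil invariance} then tells us that the fiber of $\KB(X \times \Delta^n) \to \KB(Z \times \Delta^n)$ is $1$-connective for every $n$. Passing to geometric realisation, and using that the realisation of a simplicial object of $1$-connective spectra is again $1$-connective (by inspection of the spectral sequence $E^2_{p,q} = \pi_p^s(\pi_q X_\bullet) \Rightarrow \pi_{p+q}\lvert X_\bullet \rvert$ for a simplicial spectrum), the fiber of $\KH(X) \to \KH(Z)$ is itself $1$-connective. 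Hence $i^*$ is already an isomorphism on $\K^H_n$ for every $n \le 0$.

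Upgrading to all degrees is the main obstacle. For this I would invoke cdh descent for $\KH$: the surjective closed immersion $i$ can be viewed as a degenerate abstract blow-up square with $X' = E = Z$, $f = i$, and empty open complement. Cdh descent for $\KH$---established for classical schemes in \cite{KerzStrunkTamme} and extended to derived algebraic spaces in \cite{BachmannKhanRaviSosnilo}---then forces the associated square of $\KH$-spectra to be homotopy cartesian; since two of its edges are identity maps, the remaining edge $\KH(X) \to \KH(Z)$ is automatically invertible.

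The genuinely delicate point is this cdh descent statement itself. It does not follow immediately from the pro-cdh excision of \thmref{thm:K proper excision}: one must show that the formal completion pro-spectrum collapses onto $\KH(Z)$ after $\A^1$-localisation, and this collapsing is itself a form of nil-invariance. The two arguments therefore have to be run in a coordinated fashion---as is done in \cite{BachmannKhanRaviSosnilo} via a pro-cdh descent formalism with $\A^1$-invariant coefficients---to avoid circularity.
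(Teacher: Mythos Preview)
Your decisive step is the third one: once proper cdh excision for $\KH$ (\thmref{thm:KH proper excision}) is available, nil-invariance drops out of the degenerate abstract blow-up square exactly as you say---the right-hand vertical arrow $\KH(X_\red)\to\KH(Z_\red)$ is already invertible because a surjective closed immersion induces an isomorphism on reductions, so cartesianness forces $i^*$ to be invertible. You are also right that the circularity hazard is real: deducing cdh descent from pro-cdh excision (\thmref{thm:K proper excision}) requires collapsing the pro-system along infinitesimal thickenings, which is itself a form of the statement being proved. The way out is the route you only allude to: Cisinski's proof of cdh descent for $\KH$ over schemes \cite{CisinskiKH} goes through representability of $\KH$ by $\mrm{KGL}$ in the stable motivic category together with cdh descent for motivic spectra, and takes no nil-invariance of $\KH$ as input; after Nisnevich-reducing algebraic spaces to schemes, your step~3 is then complete and non-circular.

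Steps 1--2, by contrast, are correct but orphaned. The $1$-connectivity of the fibre gives an isomorphism on $\KH_n$ for $n\le 0$, but this never feeds into step~3, which already handles all degrees; and there is no evident way to bootstrap from non-positive degrees to positive ones using only $\A^1$-invariance and the Bass sequence (these go the wrong direction, relating degree $n$ to degree $n{-}1$). If you wanted an argument avoiding cdh descent entirely, the standard alternative is: reduce to classical truncations---e.g.\ via Land--Tamme's theorem that Milnor-excisive localising invariants are truncating \cite{LandTamme}, which in turn rests only on Weibel's classical Milnor excision for $\KH$---and then invoke Weibel's original nil-invariance for discrete rings \cite{WeibelKH}. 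The paper's own proof is a bare citation to \cite[Thm.~5.13]{KhanKblow}, so no finer comparison with the paper's argument is possible here.
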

    \begin{proof}
      See \cite[Thm.~5.13]{KhanKblow}.
    \end{proof}

    In particular, taking $Z$ to be the classical truncation of $X$ shows that $\KH$ is insensitive to derived structures.

    We also have the following versions of ``excising open subsets''.

    \begin{thm}[Milnor excision]\label{thm:KH Milnor}
      For any Milnor square of qcqs derived algebraic spaces as in \ssecref{ssec:excising opens}, the induced square
      \[ \begin{tikzcd}
        \KH(X)\ar{r}{i^*}\ar{d}{f^*}
        & \KH(Z)\ar{d}
        \\
        \KH(X')\ar{r}
        & \KH(Z')
      \end{tikzcd} \]
      is cartesian.
    \end{thm}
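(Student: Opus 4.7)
The plan is to reduce to \thmref{thm:K Milnor} by replacing the given Milnor square with a \emph{derived enhancement} that is actually homotopy cartesian, and then to use nil-invariance of $\KH$ (\thmref{thm:KH derived invariance}) to recover $\KH$ of the original.

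First, I construct a qcqs \da space $\tilde X$ together with a surjective closed immersion $X \hook \tilde X$ fitting into a homotopy cartesian Milnor square
\[
  \begin{tikzcd}
    Z' \ar{r}\ar{d} & X' \ar{d}
    \\
    Z \ar{r} & \tilde X.
  \end{tikzcd}
\]
Zariski-locally on $X$, write $X = \Spec A$, $X' = \Spec B$, $Z = \Spec C$, $Z' = \Spec D$, and set $\tilde A := B \fibprodR_D C$ in \scrs and $\tilde X := \Spec(\tilde A)$. Because $f : X' \to X$ is affine, $Z \hook X$ is a closed immersion, and the original square is cartesian on classical truncations, the map $B \to D$ is surjective on $\pi_0$; the Mayer--Vietoris long exact sequence for this homotopy pullback of \scrs then gives
\[
  \pi_0(\tilde A) \simeq \pi_0(B) \fibprod_{\pi_0(D)} \pi_0(C) \simeq A,
\]
so $X \simeq (\tilde X)_\cl \hook \tilde X$ is a surjective closed immersion. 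These local constructions glue along $X$ because $f$ is affine.

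Second, apply \thmref{thm:K Milnor} to the enhanced (now homotopy cartesian) Milnor square; all corners are qcqs \da spaces, hence perfect, so the theorem yields a cartesian square of $\KB$-spectra. Since the \inftyCat of spectra is stable, colimits preserve cartesian squares; applying $\KH(-) = \colim_{[n]\in\bDelta^\op} \KB(- \times \Delta^n)$ levelwise therefore gives a cartesian square
\[
  \begin{tikzcd}
    \KH(\tilde X) \ar{r}\ar{d} & \KH(Z) \ar{d}
    \\
    \KH(X') \ar{r} & \KH(Z').
  \end{tikzcd}
\]
By \thmref{thm:KH derived invariance}, the surjective closed immersion $X \hook \tilde X$ induces an equivalence $\KH(\tilde X) \xrightarrow{\sim} \KH(X)$, and substituting this in yields the desired excision square for $\KH$.

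The main obstacle is the global construction of $\tilde X$ as a qcqs \da space: locally the derived pullback $\Spec(\tilde A)$ is immediate, but one must verify that the affine charts glue. The crucial input is that $f$ is affine, so the Milnor square is encoded by the sheaf of \scrs
\[
  f_*\sO_{X'} \fibprodR_{(f \circ i')_*\sO_{Z'}} i_*\sO_Z
\]
on $\abs{X}$, and $\tilde X$ is the relative $\Spec$ of this sheaf. This construction, as well as the verification that the resulting square is homotopy cartesian in \da spaces, is carried out in \cite{BachmannKhanRaviSosnilo}, and in the affine case already in \cite{LandTamme}.
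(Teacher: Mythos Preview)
Your argument has a genuine gap: the square with $\tilde X$ is \emph{not} homotopy cartesian in general. Your construction $\tilde A = B \fibprodR_D C$ makes the square of affines homotopy \emph{cocartesian} (so $\tilde X$ is the derived pushout), but \thmref{thm:K Milnor} requires the square to be homotopy \emph{cartesian}, i.e., $D \simeq B \otimes^{\bL}_{\tilde A} C$. These are different conditions and the latter typically fails. Concretely, take $B = k[x]$, $C = k[y]$, $D = k$ with the evaluation maps at $0$. Then $\tilde A = B \fibprodR_D C$ is discrete and equals the classical fiber product $k[x,y]/(xy)$, but $B \otimes^{\bL}_{\tilde A} C$ has $\pi_{2i} \simeq k$ for all $i \ge 0$ (compute $\mathrm{Tor}^{\tilde A}_*(k[x], k[y])$ via the $2$-periodic free resolution of $k[x]$ over $\tilde A$). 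More generally, whenever $B$, $C$, $D$ are classical, the very Mayer--Vietoris argument you invoke shows $\pi_i(\tilde A) = 0$ for $i>0$, so $\tilde A$ is discrete and $\tilde X = X$: your ``derived enhancement'' is the identity, and the square remains non-cartesian. (Incidentally, the universal property gives a map $A \to \tilde A$, hence $\tilde X \to X$, not the closed immersion $X \hook \tilde X$ you wrote; but this is moot given the above.)

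The references you cite do not rescue this. The point of \cite{LandTamme} is precisely that the defect of Milnor excision for $\KB$ is measured by the difference between $D$ and the ring $B \odot_A C$, whose underlying spectrum is $B \otimes^{\bL}_A C$; they do \emph{not} show that a derived pullback of rings is also a derived pushout. The paper's own proof takes a different route: it uses nil-invariance (\thmref{thm:KH derived invariance}) and Nisnevich descent to reduce to classical affine Milnor squares, and then invokes Weibel's original theorem \cite[Thm.~2.1]{WeibelKH}, which handles the excision defect after $\A^1$-localization by a direct argument. If you want to bypass Weibel, you would have to show that $\KH$ kills the Land--Tamme defect term, which is a substantially different argument from the one you sketched.
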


    \begin{proof}
      By \thmref{thm:KH derived invariance} we may replace the square by its classical truncation.
      By \thmref{thm:K excision}\itemref{item:K excision etale} and \cite[Thm.~3.4.2.1]{LurieSAG} we may assume it consists of affine schemes.
      In this case the claim is a classical result of Weibel (it is equivalent to \cite[Thm.~2.1]{WeibelKH}).
    \end{proof}

    \begin{thm}[Proper excision]\label{thm:KH proper excision}
      Let $f : X' \to X$ be a proper morphism of qcqs derived algebraic spaces.
      Let $U \sub X$ and $U' \sub X'$ be quasi-compact open subsets such that $f$ restricts to an isomorphism $U'_\red \to U_\red$, and let $Z \sub X$ and $Z' \sub X'$ be their reduced complements.
      Then the square
      \[ \begin{tikzcd}
        \KH(X)\ar{r}\ar{d}{f^*}
        & \KH(Z)\ar{d}
        \\
        \KH(X')\ar{r}
        & \KH(Z')
      \end{tikzcd} \]
      is cartesian.
    \end{thm}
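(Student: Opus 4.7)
The plan is to deduce this from the pro-version of proper excision for $\KB$ (\thmref{thm:K proper excision}) combined with nil-invariance of $\KH$ (\thmref{thm:KH derived invariance}). First I would use nil-invariance to replace each of $X$, $X'$, $Z$, $Z'$ by its classical reduction; after this reduction, $f$ restricts to a genuine isomorphism $U' \to U$. Next, since $\KB$ (hence $\KH$) commutes with cofiltered limits of qcqs algebraic spaces along affine transition maps, I would write $X$ as a cofiltered limit of noetherian algebraic spaces and spread $f$, $Z$, $U$ over this limit, thereby reducing to the case where $X$ and $X'$ are noetherian.

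With the noetherian reduction in hand, \thmref{thm:K proper excision} applies (noetherian algebraic spaces are derived $1$-algebraic stacks with bounded structure sheaf and trivially have affine diagonal) and produces a cartesian square of pro-spectra whose right column is $\form{\KB}(X^\wedge_Z) \to \form{\KB}(X'^\wedge_{Z'})$. The remaining task is to show that applying $\A^1$-localization converts this into the cartesian square in the statement, with the right column replaced by $\KH(Z) \to \KH(Z')$.

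The key observation is that for any infinitesimal thickening $Z_n$ of $Z$ inside $X$, one has $(Z_n)_\red \simeq Z_\red = Z$, so \thmref{thm:KH derived invariance} gives $\KH(Z_n) \simeq \KH(Z)$. Consequently the pro-system $\{\KB(Z_n)\}_n$ becomes essentially constant at $\KH(Z)$ after $\A^1$-localization, and similarly on the $X'$-side. Passing the pro-cartesian square of Step~3 through $L_{\A^1}$, the right column collapses to the genuine morphism $\KH(Z) \to \KH(Z')$, and we obtain the desired cartesian square of spectra.

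The main technical obstacle is the last step: one must justify that $\A^1$-realization of a cartesian square of pro-spectra is cartesian in spectra, using that the pro-systems on the right become essentially constant after $\A^1$-localization. This reduces to the formal statement that for a pro-spectrum which becomes essentially constant after a colimit-preserving functor, the induced map to the constant value is an isomorphism and preserves cartesianness of squares; it works here precisely because of the levelwise nil-invariance identifications $\KH(Z_n) \simeq \KH(Z)$, so no delicate limit-colimit interchange is needed beyond bookkeeping in the pro-category.
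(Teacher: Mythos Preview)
Your route is genuinely different from the paper's. The paper does not argue at all: it simply cites \cite{CisinskiKH} for schemes and \cite[Thm.~D]{KhanKblow} for algebraic spaces, both of which deduce proper (cdh) excision for $\KH$ from its representability in the stable motivic homotopy category $\mrm{SH}$ and the six-functor formalism there. Your proposal instead follows the Kerz--Strunk--Tamme philosophy: pro-excision for $\KB$ plus nil-invariance of $\KH$. That is a legitimate alternative strategy, and it has the advantage of staying entirely within K-theory without invoking motivic machinery; the motivic route, on the other hand, gives the result as a formal consequence of a much more general descent statement.

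There are, however, two real gaps in your sketch. First, noetherian algebraic spaces do \emph{not} ``trivially have affine diagonal'' (glue two copies of $\A^2$ along $\A^2\setminus\{0\}$), so \thmref{thm:K proper excision} as stated in this paper does not apply directly. You would have to cite the sharper input from \cite{KerzStrunkTamme} or \cite{BachmannKhanRaviSosnilo}, or add a Nisnevich-local reduction.

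Second, and more substantively, the last step is not bookkeeping. Writing $T_n^{(k)}$ for the total fibre of the $\KB(-\times\Delta^k)$-square with the $n$th thickening $Z_n$, pro-excision gives that $\{T_n^{(k)}\}_n$ is pro-zero \emph{for each fixed $k$}, and nil-invariance gives that $\colim_k T_n^{(k)}$ is independent of $n$ and equals the total fibre of the $\KH$-square. To conclude this common value is zero you need the null-homotopy $T_m^{(k)}\to T_n^{(k)}$ to exist for an $m$ \emph{uniform in $k$}; colimits in $\mrm{Pro}(\mrm{Sp})$ are not computed levelwise, so the levelwise identifications $\KH(Z_n)\simeq\KH(Z)$ do not by themselves collapse $\colim_k\{T_n^{(k)}\}_n$ to zero. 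Such uniformity can in fact be extracted from the proofs in \cite{KerzStrunkTamme} or \cite{BachmannKhanRaviSosnilo} (the bound depends only on the ideal of $Z$ in $X$, not on the flat $\Delta^k$-factor), but this is a genuine input you must supply, not a formality. Your phrase ``a colimit-preserving functor'' is also imprecise here: $\A^1$-localization is an operation on presheaves of spectra, not an endofunctor of spectra that one can apply levelwise to a pro-spectrum.
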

    \begin{proof}
      For schemes this is the main result of \cite{CisinskiKH} (where the noetherian assumption can be dropped by \cite[App.~C]{HoyoisLefschetz}).
      For the case of algebraic spaces see \cite[Thm.~D]{KhanKblow}.
    \end{proof}

    Finally we also get the following Mayer--Vietoris property for closed subspaces.

    \begin{cor}
      Let $X$ be a qcqs derived algebraic space.
      For any closed subspaces $Y$ and $Z$ such that $X = Y \cup Z$, the square
      \[ \begin{tikzcd}
        \KH(X) \ar{r}\ar{d}
        & \KH(Y)\ar{d}
        \\
        \KH(Z)\ar{r}
        & \KH(Y\cap Z)
      \end{tikzcd} \]
      is cartesian.
      Here $Y\cap Z$ is the classical or derived scheme-theoretic intersection ($\KH$ is insensitive to the difference by \thmref{thm:KH derived invariance}).
    \end{cor}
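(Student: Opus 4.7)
My plan is to deduce this from proper excision for $\KH$ (\thmref{thm:KH proper excision}) applied to the coproduct map $f : Y \sqcup Z \to X$ of the two closed immersions. First, I would observe that $f$ is proper (a finite coproduct of closed immersions), and that by nil-invariance (\thmref{thm:KH derived invariance}) I may replace $Y$, $Z$, $Y\cap Z$ by their reductions. Taking the quasi-compact open $U = X \setminus Y \sub X$, I need to identify $U' := f^{-1}(U)$: its intersection with the $Y$-component is empty, and with the $Z$-component is $Z \setminus (Y \cap Z)$. The hypothesis $X = Y \cup Z$ gives $\abs{X} \setminus \abs{Y} = \abs{Z} \setminus \abs{Y \cap Z}$, so the induced closed immersion $U' \to U$ is a set-theoretic bijection, hence an isomorphism on reduced subspaces. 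The reduced complements are $Y$ in $X$ and $Y \sqcup (Y \cap Z)$ in $Y \sqcup Z$.

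Proper excision then supplies a cartesian square
\[
\begin{tikzcd}
\KH(X) \ar{r}\ar{d}
 & \KH(Y) \ar{d}
\\
\KH(Y \sqcup Z) \ar{r}
 & \KH(Y \sqcup (Y \cap Z)).
\end{tikzcd}
\]
Using additivity $\KH(A \sqcup B) \simeq \KH(A) \oplus \KH(B)$ (a consequence of Zariski descent for $\KH$, see \remref{rem:K descent}), this rewrites as a cartesian square whose bottom edge becomes $\KH(Y) \oplus \KH(Z) \to \KH(Y) \oplus \KH(Y \cap Z)$. Tracing through the construction: the left vertical map identifies with $(i_Y^*, i_Z^*)$, the right vertical with $y \mapsto (y, \iota^* y)$ for $\iota : Y \cap Z \hook Y$, and the bottom with $(a, b) \mapsto (a, j^* b)$ for $j : Y \cap Z \hook Z$.

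The final step is to observe that the $\KH(Y)$-factor in the right-hand column and bottom row matches tautologically, so the pullback of this enlarged square is canonically identified with the pullback of the subdiagram $\KH(Y) \to \KH(Y \cap Z) \leftarrow \KH(Z)$; concretely, the residual data is precisely a pair $(y, b) \in \KH(Y) \times \KH(Z)$ with $\iota^* y = j^* b$. This gives the asserted cartesian square. The main obstacle is not technical but organizational: one must correctly verify that the asymmetric proper-excision input (with $X' = Y \sqcup Z$ and the distinguished choice $U = X \setminus Y$) unfolds, after the splitting $\KH(Y \sqcup -) \simeq \KH(Y) \oplus \KH(-)$, into the symmetric Mayer--Vietoris output.
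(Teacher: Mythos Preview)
Your argument is correct, but it takes an unnecessary detour. The paper gives no explicit proof, but the placement immediately after \thmref{thm:KH proper excision} indicates the intended argument: apply proper excision \emph{directly} to the closed immersion $i_Z : Z \hookrightarrow X$ (which is proper), with $U = X \setminus Y$. Since $X = Y \cup Z$ forces $\abs{U} \sub \abs{Z}$, the base change $U' = i_Z^{-1}(U) = Z \setminus (Y\cap Z) \to U$ is a surjective closed immersion, hence an isomorphism on reductions; the reduced complements are $Y_\red$ and $(Y\cap Z)_\red$, and the resulting cartesian square is already the one in the statement (after invoking nil-invariance). No disjoint union, no additivity bookkeeping, no cancellation of the redundant $\KH(Y)$ factor is required.

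Your route via $f : Y \sqcup Z \to X$ reaches the same destination and is a perfectly valid alternative; it just does extra work that the asymmetry of the proper excision hypothesis already absorbs. One small point applies to both approaches: you assert that $U = X \setminus Y$ is quasi-compact, which \thmref{thm:KH proper excision} requires, but for an arbitrary closed subspace of a qcqs space this need not hold. In practice one either reads ``closed subspace'' as finitely presented, or first reduces to the classical reduced situation via \thmref{thm:KH derived invariance} and then notes that the corollary is Nisnevich-local, so one may assume $X$ affine and noetherian-approximate; either way this is a routine technicality rather than a gap in strategy.
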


\section{Rational étale K-theory and G-theory}
\label{sec:Get}

  Given a \dA stack $\sX$, let $\KB(\sX)_\Q$ and $\G(\sX)_\Q$ denote the rationalized K- and G-theory spectra, respectively.
  In this section we introduce étale-localized versions $\Ket(\sX)_\Q$ and $\Get(\sX)_\Q$.

  \subsection{Descent on algebraic spaces}

    \begin{thm}[Étale and flat descent]\label{thm:etale descent asp}\leavevmode
      \begin{thmlist}
        \item
        On the \inftyCat of qcqs derived algebraic spaces, the presheaf $X \mapsto \KB(X)_\Q$ satisfies descent for the étale and finite flat topologies.

        \item
        On the \inftyCat of qcqs derived algebraic spaces, the presheaf $X \mapsto \KH(X)_\Q$ satisfies descent for the fppf topology.

        \item
        On the \inftyCat of noetherian derived algebraic spaces and finite Tor-amplitude morphisms, the presheaf $X \mapsto \G(X)_\Q$ satisfies descent for the fppf topology.
      \end{thmlist}
    \end{thm}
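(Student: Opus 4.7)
The plan is a topology-by-topology reduction, treating the three parts in increasing order of subtlety. Integrally, Nisnevich descent is already available: for $\KB$ it follows from the excision theorem \thmref{thm:K excision}\itemref{item:K excision etale} exactly as in \remref{rem:K descent}, and for $\G$ it is \thmref{cor:G Nis}. Rationally, one promotes Nisnevich descent first to étale and finite flat descent via a transfer argument, and then to fppf descent using $\A^1$-invariance.

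For part~(i), I would first recall that the étale topology on qcqs derived algebraic spaces is generated by Nisnevich covers together with finite étale surjections; more precisely, it suffices to verify descent along finite étale Galois covers $f \colon Y \to X$ with Galois group $G$ of order $d$, locally constant on $X$. Since $f$ is proper, representable, of finite Tor-amplitude (indeed flat), and almost of finite presentation, both $f^*$ and $f_*$ act on $\KB$, and the projection formula (\propref{prop:K proj}) together with the calculation $f_* f^*(\sO_X) = f_*(\sO_Y)$ being a locally free sheaf of rank $d$ shows that $f_* f^* = d \cdot \id$ on $\KB(X)$. After rationalization $d$ is invertible, so a standard argument shows that $\KB(X)_\Q$ is the homotopy fixed points of the $G$-action on $\KB(Y)_\Q$, giving étale descent. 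The very same trace identity works for finite flat covers, where $f_*\sO_Y$ is again finite locally free of constant rank (after Nisnevich-localizing), yielding finite flat descent simultaneously.

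For part~(ii), $\KH_\Q$ is the geometric realization of the simplicial spectrum $\KB(X \times \Delta^\bullet)_\Q$, so it inherits étale and finite flat descent from part~(i) termwise. It remains to upgrade to fppf descent. The fppf topology is generated by the étale, finite flat, and smooth topologies; so the task reduces to smooth descent. For this I would combine Nisnevich descent (inherited from $\KB$) with $\A^1$-invariance, which holds for $\KH$ by construction: locally on the source, a smooth surjection factors as an étale map followed by the projection $\A^n \to \Spec(\Z)$, and both factors are killed by the combination of étale and $\A^1$-local equivalences.

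For part~(iii), I would run the same argument for $\G(-)_\Q$. Nisnevich descent is \thmref{cor:G Nis}, $\A^1$-invariance is \corref{cor:G htp vb}, and the finite flat transfer argument goes through because a finite flat morphism $f$ is proper of finite cohomological dimension (so $f_*$ exists on $\G$ by \propref{prop:G projection}) and of Tor-amplitude zero (so $f^*$ exists on $\G$ and the projection formula holds). Restriction to finite Tor-amplitude morphisms is exactly what is needed to ensure all the operations used are defined. The main obstacle I anticipate is not any individual step but rather the verification that the trace identity $f_* f^* = d \cdot \id$ lifts from a $\pi_0$-statement to a coherent identity of spectrum maps, so that the descent spectral sequence collapses to the expected homotopy fixed point equivalence; this is the standard content that has to be imported from Thomason's work, or more conveniently from the formalism of presheaves with transfers.
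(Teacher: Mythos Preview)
Your transfer-based approach is a legitimate alternative to the paper's, which instead invokes \cite[Prop.~5.4]{ClausenMathewNaumannNoel}: that result says directly that any \emph{rational} localizing invariant of $\Dperf(S)$-linear \inftyCats satisfies finite flat descent, and the paper applies it uniformly to $\KB(-)_\Q$, to $\KH(-)_\Q$ (as a colimit of such), and to $\G(-)_\Q$ (realized as $\sA \mapsto \K(\sA \otimes_{\Dperf(S)} \Dcoh(S))_\Q$). Étale descent then follows by combining with Nisnevich via \cite[Thm.~B.6.4.1]{LurieSAG}. Your direct trace argument recovers the Galois (hence étale) case in the spirit of Thomason, but note that for a general finite locally free surjection the identity $f_*f^* = d\cdot\id$ only exhibits $f^*$ as a split monomorphism; without a group action there is no averaging idempotent, and one still has to identify the image with the totalization of the \v{C}ech nerve. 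That is exactly what the CMNN descendability machinery supplies, so your deferral to ``Thomason or presheaves with transfers'' is really a deferral to the same input the paper uses.

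Your passage from étale and finite flat to fppf in parts~(ii) and~(iii) does not work as written. The smooth and étale topologies coincide on algebraic spaces (every smooth surjection admits sections étale-locally on the target), so ``reducing to smooth descent'' buys nothing, and the $\A^1$-invariance argument you sketch is a statement about local factorizations on the \emph{source}, which does not yield descent on the target. The paper's route is different: it first uses derived invariance (\thmref{thm:KH derived invariance} for $\KH$, \corref{cor:G derived invariance} for $\G$) to reduce to the site of \emph{classical} algebraic spaces, and then appeals to the fact that on that site the fppf topology is generated by Nisnevich covers together with finite locally free surjections (\cite[Tag~05WN]{Stacks}). Once you have Nisnevich and finite flat descent, fppf descent follows immediately from this generation statement --- no $\A^1$-invariance is needed. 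This also explains why the paper claims fppf descent only for $\KH_\Q$ and $\G_\Q$, not for $\KB_\Q$: the reduction to classical spaces requires derived invariance, which $\KB$ does not enjoy.
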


    \begin{rem}
      Recall that any smooth surjection generates a covering for the étale topology.
      Thus in particular, \thmref{thm:etale descent asp} implies that for any smooth surjection $p : U \to X$, the inverse image map $p^*$ induces an isomorphism between $\KB(X)_\Q$ and the homotopy limit of the cosimplicial diagram
      \begin{equation*}
        \KB(U)_\Q
        \rightrightarrows \KB(U\fibprod_X U)_\Q
        \rightrightrightarrows \KB(U\fibprod_X U \fibprod_X U)_\Q
        \rightrightrightrightarrows \cdots
      \end{equation*}
      and similarly for KH and G-theory.
    \end{rem}

    \begin{proof}[Proof of \thmref{thm:etale descent asp}]
      We first show that each presheaf satisfies finite flat descent.
      For this we may restrict to the small étale site of a fixed derived algebraic space $S$.
      Combining \thmref{thm:K excision}\itemref{item:K excision etale} or its KH/G-theory analogue (\remref{rem:KH basics} or \corref{cor:G Nis}) with \cite[Thm.~3.4.2.1]{LurieSAG}, we may assume that $S$ is affine.
      Then the claim follows by applying \cite[Prop.~5.4]{ClausenMathewNaumannNoel} (taking $n=0$, cf.~Appendix~A in \emph{op. cit.}), which is a statement about localizing invariants of $\Dperf(S)$-linear \inftyCats (see Def.~A.1 of \emph{op. cit.}) that take values in rational spectra.
      For the K-theory statement, take the localizing invariant that sends a $\Dperf(S)$-linear \inftyCat $\sA$ to the spectrum $\KB(\sA)_\Q$.
      For $\KH$, take the localizing invariant
      \begin{equation*}
        \sA \mapsto \colim_{[n]\in\bDelta^\op} \KB(\sA \otimes_{\Dperf(S)} \Dperf(S \times \Delta^n))_\Q.
      \end{equation*}
      For G-theory, take the localizing invariant $\K'$ that sends a $\Dperf(S)$-linear \inftyCat $\sA$ to the spectrum $\K(\sA \otimes_{\Dperf(S)} \Coh(S))_\Q$.
      Indeed, for every $X$ étale over $S$, there is a canonical isomorphism
      \begin{align*}
        \K'(\Dperf(X)) &= \K(\Dperf(X) \otimes_{\Dperf(S)} \Coh(S))_\Q\\
        &\simeq \K(\Coh(X))_\Q = \G(X)_\Q
      \end{align*}
      by \cite[Chap.~4, Rem.~3.3.3]{GaitsgoryRozenblyum}.

      By \cite[Thm.~B.6.4.1]{LurieSAG}, combining finite flat descent with Nisnevich descent (\remref{rem:K descent}) yields étale descent for each presheaf.

      For $\KH_\Q$ and $\G_\Q$, we may use derived invariance (\thmref{thm:KH derived invariance} and \corref{cor:G derived invariance}) to restrict to the site of classical algebraic spaces.
      Since the fppf topology on this site is generated by Nisnevich coverings and finite flat coverings (see e.g.~\cite[Tag~\href{https://stacks.math.columbia.edu/tag/05WN}{05WN}]{Stacks}), the claim follows.
    \end{proof}

    \begin{thm}[Proper descent]\label{thm:G-theory/proper descent}
      On the \inftyCat of qcqs derived algebraic spaces, the presheaf $X \mapsto \KH(X)_\Q$ satisfies descent along proper schematic surjections of finite presentation.
      That is, for any proper schematic surjection of finite presentation $p : Y \to X$, $p^*$ induces an isomorphism between $\KH(X)_\Q$ and the homotopy limit of the cosimplicial diagram
      \begin{equation*}
        \KH(Y)_\Q
        \rightrightarrows \KH(Y\fibprodR_X Y)_\Q
        \rightrightrightarrows \KH(Y\fibprodR_X Y \fibprodR_X Y)_\Q
        \rightrightrightrightarrows \cdots.
      \end{equation*}
    \end{thm}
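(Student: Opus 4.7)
The plan is to prove that $\KH(-)_\Q$ is a sheaf for the h-topology on qcqs algebraic spaces; proper descent then follows because any proper schematic surjection of finite presentation is an h-cover. As a preliminary reduction, I use derived invariance of $\KH$ (\thmref{thm:KH derived invariance}) to replace $p$ by its classical truncation $p_\cl : Y_\cl \to X_\cl$: since the classical truncation of $Y \fibprodR_X Y$ is the classical fibre product $Y_\cl \fibprod_{X_\cl} Y_\cl$, the Čech nerves of $p$ and $p_\cl$ give the same cosimplicial $\KH_\Q$-diagram. Continuity of $\KH$ along cofiltered limits of qcqs algebraic spaces with affine transition maps then allows me to assume $X$ is noetherian of finite Krull dimension.

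Next, I establish cdh descent for $\KH_\Q$. The cdh cd-structure on noetherian qcqs algebraic spaces is generated by Nisnevich squares and abstract blow-up squares (proper surjections that are isomorphisms over a quasi-compact dense open). Nisnevich descent is part of \remref{rem:KH basics} (via \thmref{thm:K excision}\itemref{item:K excision etale}), and the abstract blow-up condition is precisely the content of proper excision (\thmref{thm:KH proper excision}). Since the cdh cd-structure is complete, regular, and bounded (as verified by Asok--Hoyois--Wendt in the algebraic-space setting), Voevodsky's criterion upgrades these two Mayer--Vietoris-type conditions to full cdh descent for $\KH_\Q$.

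Finally, by a theorem of Rydh extending Voevodsky's original result for schemes, the h-topology on noetherian qcqs algebraic spaces is generated jointly by its cdh and fppf sub-topologies. Combining cdh descent with the fppf descent of $\KH_\Q$ supplied by \thmref{thm:etale descent asp}, Lurie's descent-assembly theorem \cite[Thm.~B.6.4.1]{LurieSAG} produces h-descent, and hence the asserted descent along any proper schematic surjection of finite presentation, which is an h-cover. The principal obstacle is precisely this assembly step: one must promote Čech descent for each generating class of covers to Čech descent (not merely hyperdescent) for arbitrary h-covers. This is handled by the bounded cdh-cohomological dimension of $\KH_\Q$ implied by \thmref{thm:Weibel} (vanishing of negative K-groups in sufficiently high degree, applied on each finite-dimensional stratum), which ensures that h-hyperdescent and h-descent agree for $\KH_\Q$, so that the abstract generating-families argument suffices.
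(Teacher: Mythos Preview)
Your overall strategy---reduce to classical spaces via derived invariance, then combine fppf descent with cdh-type descent to obtain h-descent---is the same as the paper's. The paper, however, packages this much more efficiently: after the reduction to classical schemes it simply invokes the criterion of Bhatt--Scholze \cite[Thm.~2.9]{BhattScholze}, which says directly that a presheaf satisfying fppf descent and proper cdh (abstract blow-up) excision satisfies descent for proper surjections. This bypasses the detour through Voevodsky cd-structures, the continuity reduction, and any assembly argument.

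Your execution has two genuine gaps. First, \cite[Thm.~B.6.4.1]{LurieSAG} is not a general ``combine two topologies'' theorem: it is specifically the statement that \'etale descent follows from Nisnevich plus finite-\'etale descent (with an fppf variant), and it does not apply to the pair (cdh, fppf) you need. Second, your final paragraph is confused: \thmref{thm:Weibel} asserts vanishing of negative K-groups below a dimension bound, which is not the same as bounded cdh-cohomological dimension of the presheaf $\KH_\Q$, and in any case the passage between \v{C}ech descent and hyperdescent for the h-topology is not settled by that vanishing alone. The Bhatt--Scholze criterion is formulated precisely so as to sidestep both of these issues, which is why the paper's proof is a single sentence once the two inputs (\thmref{thm:etale descent asp} and \thmref{thm:KH proper excision}) are in hand.
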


    \begin{proof}
      By derived invariance (\thmref{thm:KH derived invariance}) and descent (\corref{thm:etale descent asp}), we may restrict to the site of classical schemes.
      Furthermore we may fix a scheme $S$ and restrict to the site of $S$-schemes of finite presentation.
      Then we may apply the criterion of \cite[Thm.~2.9]{BhattScholze}, which reduces the claim to fppf descent (\thmref{thm:etale descent asp}) and proper cdh descent (\thmref{thm:KH proper excision}).
    \end{proof}

    \begin{rem}
      Presumably, \thmref{thm:G-theory/proper descent} extends to (non-schematic) proper surjections of finite presentation, but I did not check that \cite[Thm.~2.9]{BhattScholze} extends to algebraic spaces.
    \end{rem}

    \begin{rem}
      In fact, the previous proof shows that $\KH_\Q$ satisfies descent for Voevodsky's h-topology (or rather, Rydh's non-noetherian generalization of it \cite[Sect.~8]{RydhH}) on the site of (derived) schemes.
    \end{rem}

  \subsection{Étale K-theory and G-theory}

    \thmref{thm:etale descent asp} does not extend to stacks.
    In fact, on \dA stacks, rational G-theory only satisfies descent with respect to étale maps that are \emph{isovariant} or ``stabilizer-preserving'' (see \cite[Sect.~3]{JoshuaRR}).
    
    Nevertheless, \thmref{thm:etale descent asp} implies that $\KB(-)_\Q$, $\KH(-)_\Q$, and $\G(-)_\Q$ extend uniquely to étale sheaves on \dA stacks, which we denote by
    \[ \Ket(-)_\Q, ~ \KHet(-)_\Q, ~\text{and}~ \Get(-)_\Q. \]
    These could equivalently described as the étale localizations of the presheaves $\KB(-)_\Q$, $\KH(-)_\Q$, and $\G(-)_\Q$, respectively.
    Note that, as before, we only ever discuss descent with respect to \v{C}ech covers and not hypercovers.

    Thus by construction, for any \dA stack $\sX$ and any smooth atlas $p : X \to \sX$, $\Ket(\sX)_\Q$ is isomorphic to the homotopy limit of the cosimplicial diagram
    \begin{equation*}
      \Ket(X)_\Q
      \rightrightarrows \KB(X\fibprod_\sX X)_\Q
      \rightrightrightarrows \KB(X\fibprod_\sX X \fibprod_\sX X)_\Q
      \rightrightrightrightarrows \cdots
    \end{equation*}
    and similarly for KH and G-theory.
    For example, if $\sX = [X/G]$ is the quotient of a derived algebraic space $X$ by the action of an fppf group space $G$, then $\Ket([X/G])_\Q$ is the Borel-type $G$-equivariant K-theory of $X$:
    \begin{equation*}
      \Ket(X)_\Q
      \rightrightarrows \KB(G \times X)_\Q
      \rightrightrightarrows \KB(G \times G \times X)_\Q
      \rightrightrightrightarrows \cdots
    \end{equation*}
    Similarly for $\KHet(-)_\Q$ and $\Get(-)_\Q$.

    All properties of K-theory, G-theory, and KH-theory involving \emph{inverse} image functoriality easily extend to the étale-local variants just by descent from the case of algebraic spaces.
    This applies for example to the projective bundle formula, excision for quasi-smooth blow-ups, and homotopy invariance.
    In particular, we can drop the extra hypothesis in the latter statement (compare with \corref{cor:G htp vb stack}).

    \begin{cor}[Homotopy invariance]\label{cor:G-theory/htp Get}
      Let $\sX$ be a \dA stack.
      Let $\sE$ be a perfect complex on $\sX$ of Tor-amplitude $\le 0$, and $\pi : \bV_\sX(\sE) \to \sX$ the associated vector bundle stack.
      Then inverse image along $\pi$ induces isomorphisms of spectra
      \begin{align*}
        \pi^* : \KHet(\sX)_\Q \to \KHet(\bV_\sX(\sE))_\Q,\\
        \pi^* : \Get(\sX)_\Q \to \Get(\bV_\sX(\sE))_\Q.
      \end{align*}
    \end{cor}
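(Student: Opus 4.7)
The plan is to reduce, via smooth descent, to the case where $\sX$ is a \da space and invoke \corref{cor:G htp vb stack} (for $\Get$) and its $\KH$-analogue (for $\KHet$). The point is that on a \da space all stabilizers are trivial, hence affine, so the extra hypothesis of \corref{cor:G htp vb stack} is automatic after this reduction.

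First, both $\Get(-)_\Q$ and $\KHet(-)_\Q$ satisfy smooth descent on \da stacks by construction. Choose a smooth atlas $p : X \to \sX$ by a \da space (assuming $\sX$ is a derived $1$-algebraic stack; for higher stacks, iterate on the level). Base-changing $\pi$ along $p$ gives a smooth atlas $\tilde p : \bV_X(p^*\sE) \to \bV_\sX(\sE)$, whose \v{C}ech nerve at the $n$-th stage is $\bV_{X_n}(p_n^*\sE)$, where $X_\bullet$ is the \v{C}ech nerve of $p$. By smooth descent applied separately to source and target, $\pi^*$ is invertible on $\Get_\Q$ (resp.\ $\KHet_\Q$) if and only if each base-changed $\pi_n^* : \Get(X_n)_\Q \to \Get(\bV_{X_n}(p_n^*\sE))_\Q$ is invertible, with $X_n$ a \da space.

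For each $n$, \corref{cor:G htp vb stack} applies to the \da space $X_n$ and gives that $\pi_n^* : \G(X_n) \to \G(\bV_{X_n}(p_n^*\sE))$ is invertible. The source is identified rationally with $\Get(X_n)_\Q$ via \thmref{thm:etale descent asp}; for the target, a further round of smooth descent on $\bV_{X_n}(p_n^*\sE)$---again using that a vector bundle stack over an algebraic space has affine stabilizers, so that \corref{cor:G htp vb stack} applies at each \v{C}ech stage---identifies $\Get(\bV_{X_n}(p_n^*\sE))_\Q$ with $\G(\bV_{X_n}(p_n^*\sE))_\Q$. The $\KHet$ statement is proved in parallel, using the $\KH$-analogue of \corref{cor:G htp vb stack} obtained by running the same induction on the Tor-amplitude of $\sE$ with homotopy invariance of $\KH$ for vector bundles and the $\KH$-localization sequence (\remref{rem:KH basics}) in place of \corref{cor:G htp vb} and \thmref{thm:G localization}. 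The main obstacle is threading these two layers of smooth descent coherently, so that the isomorphism on algebraic spaces coming from \corref{cor:G htp vb stack} propagates correctly to the étale-localized theories of stacks.
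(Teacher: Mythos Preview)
Your approach—descend to algebraic spaces and invoke \corref{cor:G htp vb stack}—is more roundabout than necessary, and the step you flag as the ``main obstacle'' is where the real content hides. After the first round of descent you are left with showing that $\pi_n^* : \Get(X_n)_\Q \to \Get(\bV_{X_n}(p_n^*\sE))_\Q$ is invertible for $X_n$ a \da space; but $\bV_{X_n}(p_n^*\sE)$ is still a stack, so this is exactly the corollary again with the added hypothesis $\sX = X_n$. Invoking \corref{cor:G htp vb stack} only gives the statement for $\G$, and the comparison map $\G(\bV_{X_n}(p_n^*\sE))_\Q \to \Get(\bV_{X_n}(p_n^*\sE))_\Q$ is not known to be an isomorphism for stacks a priori—that is the whole reason $\Get$ was introduced. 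Your ``further round'' needs a smooth atlas of $\bV_{X_n}(p_n^*\sE)$ whose \v{C}ech nerve consists of honest vector bundles over $X_n$; producing one amounts to presenting $p_n^*\sE$ globally as a complex of finite locally frees, i.e.\ to the resolution property on $X_n$, which is precisely the ingredient the affine-stabilizers hypothesis was buying in \corref{cor:G htp vb stack}. So your two-layer descent has not actually simplified the problem.

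The paper's argument bypasses this. Rather than first reducing the base to an algebraic space, it reruns the induction on Tor-amplitude from the proof of \corref{cor:G htp vb stack} directly in $\Get_\Q$ and $\KHet_\Q$. The base cases and the inductive splitting go through verbatim; the only place the affine-stabilizers hypothesis was used in the original proof was to stratify $\sX$ into pieces with the resolution property. Since $\Get_\Q$ and $\KHet_\Q$ are \'etale sheaves by construction, one may instead simply localize $\sX$ \'etale-locally to an affine, where the resolution property holds for free. This is shorter and avoids the bookkeeping of nested descent.
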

    \begin{proof}
      One argues as in \corref{cor:G htp vb stack}, except that the resolution property can be guaranteed simply by localizing on $\sX$ (in the étale topology).
    \end{proof}

    In contrast, the direct image functoriality is somewhat subtle (see further discussion in \ssecref{ssec:G f_*} below).
    For any proper \emph{representable} morphism $f : \sX \to \sY$ between noetherian \dA stacks, there is indeed a direct image map
    \[ f_* : \Get(\sX)_\Q \to \Get(\sY)_\Q. \]
    Since properness is local on the target, descent allows us to reduce to the case where $\sY = Y$ is representable by a derived algebraic space.
    Since $f$ is representable, it follows that $\sX = X$ is also an algebraic space.
    Thus we simply take the map
    $$\Get(\sX)_\Q \simeq \G(X)_\Q \xrightarrow{f_*} \G(Y)_\Q \simeq \Get(\sY)_\Q.$$
    Properties of G-theory involving proper representable direct images also extend to the étale-local variants (e.g., localization, base change, projection formulas).

  \subsection{Proper codescent}

    Just as étale excision (\corref{cor:G Nis}) in G-theory extends to étale descent in rational G-theory (\thmref{thm:etale descent asp}), proper co-excision (\thmref{thm:proper co-excision G}) similarly extends to proper co-descent with rational coefficients.
    This extends moreover to rational étale G-theory of stacks.

    \begin{thm}[Proper codescent]\label{thm:proper codescent}
      On the \inftyCat of noetherian \dA stacks, the copresheaf $\sX \mapsto \Get(\sX)_\Q$ satisfies descent along proper representable surjections.
    \end{thm}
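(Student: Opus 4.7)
I would mirror the strategy used for $\KH_\Q$ in \thmref{thm:G-theory/proper descent}, adapting it to the covariant codescent setting.

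\emph{Step 1: Reduction to classical noetherian algebraic spaces.} By derived invariance of $\G$ (\corref{cor:G derived invariance}), which extends to $\Get_\Q$ termwise through the Čech nerve of any smooth atlas, I may assume $\sX$ is classical. Given a proper representable surjection $p : \sY \to \sX$, pick a smooth atlas $u : X \to \sX$ with $X$ a classical noetherian algebraic space, and form its Čech nerve $X_\bullet \to \sX$. Since $p$ is representable, base change along each $X_n \to \sX$ yields a proper surjection $p_n : Y_n := \sY \times_\sX X_n \to X_n$ of classical noetherian algebraic spaces, and the Čech nerve of $p$ base changes to the Čech nerves of the $p_n$. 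By the definition of $\Get_\Q$ as the étale sheafification, combined with the étale descent of $\G_\Q$ on algebraic spaces (\thmref{thm:etale descent asp}), we have $\Get(\sY_k)_\Q = \lim_n \G(\sY_k \fibprod_\sX X_n)_\Q$. Thus the statement reduces to proper codescent for $\G_\Q$ on classical noetherian algebraic spaces, modulo a commutation of the geometric realization (over the Čech nerve of $p$) with the totalization (over the Čech nerve of $u$).

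\emph{Step 2: The algebraic-space case.} For a proper surjection $p : Y \to X$ of classical noetherian algebraic spaces, the goal is to show that the augmented simplicial spectrum $\G(Y_\bullet)_\Q \to \G(X)_\Q$ is a colimit diagram. Fixing a base $S$ and restricting to $S$-spaces of finite presentation, I would apply a covariant analogue of Bhatt--Scholze's criterion \cite[Thm.~2.9]{BhattScholze}, whose inputs are (a) rational étale codescent, which follows from étale descent of $\G_\Q$ (\thmref{thm:etale descent asp}(iii)) since rationally, any finite étale Galois cover provides a trace with $p_*p^* = \deg(p)\cdot\id$ turning descent into codescent; and (b) proper cocartesian excision (\thmref{thm:proper co-excision G}), which is the covariant counterpart of proper cdh excision. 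The Bhatt--Scholze argument proceeds by noetherian induction on closed subsets, which dualizes formally to the codescent setting in the stable $\infty$-categorical framework.

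\emph{Main obstacle.} The chief technical hurdle is the limit--colimit interchange in Step 1: unlike with $\KH_\Q$, where the entire argument can be run directly on classical schemes via Bhatt--Scholze, here one must commute a covariant geometric realization with a contravariant totalization. This interchange can be justified by exploiting the uniform connectivity of $\G_\Q$ on noetherian algebraic spaces (\propref{prop:G connective}) together with the finite cohomological dimension of the smooth atlas $u$, which bounds the complexity of the totalization. Alternatively—and likely more cleanly—one can bypass the reduction to algebraic spaces entirely by applying the covariant Bhatt--Scholze criterion directly on a site of classical $1$-algebraic stacks equipped with the topology generated by isovariant étale covers and proper representable surjections; this is essentially the strategy implicit in \thmref{thm:G-theory/proper descent} and avoids any need for an explicit colim--lim interchange.
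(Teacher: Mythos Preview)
Your approach diverges substantially from the paper's, and the core of your Step~2 has a genuine gap.

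The problem is with input~(a) of your proposed covariant Bhatt--Scholze criterion. Codescent is a statement about the covariant functoriality $f_*$, which for $\G_\Q$ exists only along \emph{proper} morphisms. Your trace argument correctly yields codescent for finite \'etale Galois covers rationally (via $p_*p^* = \deg(p)\cdot\id$ and the identification of homotopy invariants with coinvariants), but the Bhatt--Scholze decomposition of the h-topology requires Nisnevich or fppf descent as an input---and for a Nisnevich or general \'etale cover $p$ that is not proper, there is simply no map $p_* : \G(Y)_\Q \to \G(X)_\Q$ to speak of. So ``rational \'etale codescent'' is not even defined in the generality you need, and no formal dualization of \cite[Thm.~2.9]{BhattScholze} will manufacture it. Your alternative of running the argument on a stacky site with isovariant \'etale covers hits exactly the same obstruction.

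The paper instead argues directly. After reducing (via \'etale descent and base change, \propref{prop:G base change}) to the case where the target $X$ is affine, it uses noetherian induction together with the localization triangle (\thmref{thm:G localization}) to reduce further to $X = \Spec(k)$ for a field $k$. Then one picks a closed point of $Z$, base changes along a Galois extension $k'/k$ using finite \'etale descent (exploiting that rationally invariants equal coinvariants, so this commutes with the geometric realization), and finally base changes along the finite radicial surjection $\Spec(\kappa(x)) \to \Spec(k')$ using topological invariance (\propref{prop:topinv}, proved independently just before). After these replacements $f : Z \to X$ acquires a \emph{section}, which splits the augmented simplicial object; a split simplicial object is always a colimit diagram. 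This is more elementary than any Bhatt--Scholze machinery and uses only ingredients already available: localization, nil-invariance, finite \'etale descent, and topological invariance.

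Incidentally, the limit--colimit interchange you flag in Step~1 is precisely the reduction the paper performs in one line (``by \'etale descent and base change we may assume $\sX = X$ is affine''); on that point you are no worse off than the paper, which does not spell out the justification either.
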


    \begin{rem}
      More concretely, \thmref{thm:proper codescent} asserts that for any proper representable surjection $f : \sZ \to \sX$, $f_*$ induces an isomorphism between $\Get(\sX)_\Q$ and the homotopy colimit of the simplicial diagram
      \begin{equation*}
        \cdots
        \rightrightrightrightarrows \Get(\sZ\fibprodR_\sX \sZ \fibprodR_\sX \sZ)_\Q
        \rightrightrightarrows \Get(\sZ\fibprodR_\sX \sZ)_\Q
        \rightrightarrows \Get(\sZ)_\Q.
      \end{equation*}
      If $\sX$ and $\sZ$ are classical, then the fibred products need not be derived (see \corref{cor:G derived invariance}).
    \end{rem}

    We will need a few preliminary observations.
    The first is in fact a straightforward corollary of \thmref{thm:proper codescent} (consider the diagonal, which is a surjective closed immersion), but we can also give a direct proof.

    \begin{prop}[Topological invariance]\label{prop:topinv}
      Let $f : \sX \to \sY$ be a finite radicial surjection of noetherian \dA stacks (i.e., a universal homeomorphism of finite type).
      Then the induced morphism
      \[ f_* : \Get(\sX)_\Q \to \Get(\sY)_\Q \]
      is invertible.
    \end{prop}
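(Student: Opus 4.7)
My plan is to deduce the proposition directly from \thmref{thm:proper codescent}. Since $f$ is finite (hence proper and representable) and surjective, proper codescent identifies $f_* \colon \Get(\sX)_\Q \to \Get(\sY)_\Q$ with the augmentation from the geometric realization of the simplicial spectrum $[n] \mapsto \Get(\sX_n)_\Q$, where $\sX_n := \sX \fibprodR_\sY \cdots \fibprodR_\sY \sX$ ($n+1$ factors) is the $n$th term of the derived \Cech{} nerve of $f$. It therefore suffices to show that this simplicial diagram is homotopy-constant with value $\Get(\sX)_\Q$.

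The crucial point is the interaction of radiciality with the \Cech{} nerve. Radiciality of $f$ is equivalent to the diagonal $\Delta_f \colon \sX \to \sX \fibprodR_\sY \sX$ being surjective; combined with the fact that the diagonal of any finite (hence separated) morphism is a closed immersion, $\Delta_f$ becomes a surjective closed immersion. Each degeneracy map $s_j \colon \sX_{n-1} \to \sX_n$ in the \Cech{} nerve arises as a base change of $\Delta_f$ along iterated projections, and hence is likewise a surjective closed immersion. By nil-invariance of G-theory (\corref{cor:G nil-invariance}), transported to $\Get_\Q$ by rationalization and \'etale sheafification, each $s_{j,*}$ is an equivalence on $\Get(-)_\Q$. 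The simplicial identity $d_j \circ s_j = \id$ then forces each face map $d_{j,*}$ to be an equivalence, as the inverse of a degeneracy. The simplicial diagram is therefore homotopy-constant with common value $\Get(\sX)_\Q$, its colimit is $\Get(\sX)_\Q$, and the augmentation $f_*$ is an equivalence.

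The main technical subtlety is the extension of nil-invariance from \corref{cor:G nil-invariance} (a statement about $\G$ of stacks) to $\Get_\Q$: for any surjective closed immersion $i \colon \sZ \to \sX$ of noetherian \da stacks, the pullback along any smooth atlas is again a surjective closed immersion, so the cited corollary applies smooth-locally; then rationalization and \'etale sheafification, together with the compatibility of direct image along the representable proper morphism $i$ with both operations, yield the required equivalence $i_* \colon \Get(\sZ)_\Q \xrightarrow{\sim} \Get(\sX)_\Q$. Once this is in hand, the argument above proceeds without further obstacle.
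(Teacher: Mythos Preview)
Your argument is correct in outline and is in fact the one the paper itself sketches in the sentence immediately preceding the proposition: topological invariance \emph{is} a formal consequence of proper codescent, via the observation that the diagonal of a finite radicial morphism is a surjective closed immersion (so nil-invariance makes all degeneracies, and hence all face maps, in the \Cech{} nerve into equivalences on $\Get_\Q$, and the simplicial diagram becomes constant). The problem is circularity. In the paper, the proof of \thmref{thm:proper codescent} \emph{uses} \propref{prop:topinv}: after reducing to a base field $k$, that proof invokes topological invariance to base change along the purely inseparable extension $\Spec(\kappa(x)) \to \Spec(k')$ in order to manufacture a section of $f$. So you cannot cite \thmref{thm:proper codescent} here as a black box; the paper supplies a direct proof of \propref{prop:topinv} precisely to break this cycle.

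The paper's direct argument proceeds differently: reduce (by derived invariance, étale descent, and base change) to the affine case, then use \cite[IV\textsubscript{4}, 17.16.4]{EGA} to find, after stratification (handled by localization) and finite flat base change (handled by \thmref{thm:etale descent asp}), a \emph{section} $s$ of $f$. Radiciality forces $s$ to be a surjective closed immersion, so nil-invariance (\corref{cor:G nil-invariance}) makes $s_*$ invertible and hence $f_*$ as well. This route relies only on localization and finite flat descent, not on codescent. If you want to salvage your approach, you would need to give an independent proof of proper codescent for finite radicial surjections that does not already presuppose the result.
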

    \begin{proof}
      By derived invariance (\corref{cor:G derived invariance}), we may assume $\sX$ and $\sY$ are classical.
      By étale descent (and base change, \propref{prop:G base change}) we may also assume $\sY = Y$ (and hence $\sX = X$) is affine.
      By \cite[IV\textsubscript{4}, 17.16.4]{EGA}, there is a partition of $Y$ by finitely many locally closed subschemes $Y_i$ along with finite flat surjections $Y'_i \twoheadrightarrow Y_i$ such that each base change $f_i : X \fibprod_Y Y'_i \to Y'_i$ admits a section.
      By localization (\thmref{thm:G localization}), we may replace $Y$ by $Y_i$ and by finite flat descent (\thmref{thm:etale descent asp}) we may further replace $Y_i$ by $Y'_i$.
      In other words, we may assume that $f$ admits a section $s$.
      Since $f$ is radicial, $s$ is a surjective closed immersion.
      Then $s_*$ is an isomorphism (\corref{cor:G nil-invariance}), hence so is $f_*$.
    \end{proof}

    \begin{proof}[Proof of \thmref{thm:proper codescent}]
      Let $f : \sZ \to \sX$ be a proper representable surjection.
      By nil-invariance (\corref{cor:G nil-invariance}), we may assume $\sX$ and $\sZ$ are reduced classical stacks.
      By étale descent (and base change, \propref{prop:G base change}) we may assume $\sX = X$ is affine (and hence $\sZ = Z$ is an algebraic space).
      By noetherian induction and localization (\thmref{thm:G localization}), we may further reduce to the case where $X$ is the spectrum of a field $k$.
      Let $x$ be a closed point of the (nonempty, noetherian) scheme $X$.
      Its residue field $\kappa(x)$ is purely inseparable over a Galois extension $k'$ of $k$.
      Finite étale descent (\thmref{thm:etale descent asp}) in the case of $\Spec(k') \to \Spec(k)$ gives an isomorphism
      \[ \G(k')_\Q^G \simeq \G(k), \]
      where the left-hand side is the spectrum of homotopy invariants with respect to the Galois group $G$.
      Similarly for the induced Galois cover $Z' = Z \fibprod_{X} \Spec(k') \to Z$.
      Thus the diagram
      \begin{equation*}
        \cdots
        \rightrightrightrightarrows \Get(Z\fibprodR_{X} Z \fibprodR_{X} Z)_\Q
        \rightrightrightarrows \G(Z\fibprodR_{X} Z)_\Q
        \rightrightarrows \G(Z)_\Q
        \rightarrow \G(X)_\Q
      \end{equation*}
      is identified with the colimit of the same diagram where $Z$ and $X = \Spec(k)$ are replaced by $Z'$ and $X' := \Spec(k')$.
      Since formation of homotopy invariants commutes with colimits (recall that with rational coefficients, the former coincide with homotopy coinvariants, which are colimits), it will suffice to show the claim after this replacement.
      By \propref{prop:topinv} we may also base change further along the finite radicial surjection $\Spec(\kappa(x)) \to \Spec(k')$.
      In particular, we may assume that $f : Z \to X$ admits a \emph{section} $s$.
      In that case the claim is obvious, as $s$ induces a splitting of the augmented simplicial object in question (which is then automatically a homotopy colimit diagram, see e.g. \cite[6.1.3.16]{LurieHTT}).
    \end{proof}

  \subsection{Non-representable direct image}
  \label{ssec:G f_*}

    Let $f : \sX \to \sY$ be a proper morphism between noetherian \dA stacks.
    We can use proper codescent to construct a direct image map
    \[ f_* : \Get(\sX)_\Q \to \Get(\sY)_\Q \]
    even if $f$ is not representable.

    By \cite[Thm.~1.1]{Olsson} (applied to the classical truncations), there exists a scheme $Z$ and a proper surjection $g : Z \to \sX$ such that the composite $Z \to \sX \to \sY$ is projective (in particular, proper representable).
    By proper codescent along $g$ (\thmref{thm:proper codescent}), we get an essentially unique map $f_* : \Get(\sX)_\Q \to \Get(\sY)_\Q$ extending the direct image maps
    \[ \Get(Z \fibprod_\sX \cdots \fibprod_\sX Z)_\Q \to \Get(\sY)_\Q. \]

    Given another choice of $Z$ and $g$, it is an easy exercise to show that the two morphisms $f_*$ are homotopic.
    Similarly, if $f : \sX \to \sY$ and $f' : \sY \to \sZ$ are two proper morphisms (where $\sX$ and $\sY$ both have quasi-finite separated diagonal), then the two maps $f'_* \circ f_*$ and $(f'\circ f)_*$ are homotopic.
    (We do not address coherence of these homotopies here.)
    These direct images also satisfy the base change and projection formulas (Propositions~\ref{prop:G projection} and \ref{prop:G base change}).

    \begin{warn}\label{warn:G f_*}
      These non-representable direct images are \emph{not} compatible with those in ``genuine'' G-theory induced by proper push-forward of coherent complexes.
      That is, let $f : \sX \to \sY$ be a proper morphism of finite cohomological dimension and assume $\sX$ has quasi-finite separated diagonal.
      Then the square
      \[ \begin{tikzcd}
        \G(\sX)_\Q \ar{r}{f_*}\ar{d}
        & \G(\sY)_\Q \ar{d}
        \\
        \Get(\sX)_\Q \ar{r}{f_*}
        & \Get(\sY)_\Q
      \end{tikzcd} \]
      typically does not commute (unless $f$ is representable).
      
      Consider the following standard example.
      Let $k$ be a field, $G$ a finite group, and $f : BG \to \Spec(k)$ the structural map of the classifying stack.
      If the order of $G$ is prime to the characteristic of $k$, then $f$ is of finite cohomological dimension.
      Let $g : \Spec(k) \twoheadrightarrow BG$ denote the quotient morphism, which is finite étale.
      Using the base change and projection formulas, one finds that both composites $g^*g_*$ and $g_*g^*$ on $\G(k)$ are multiplication by the order of $G$.
      In particular, $g$ induces an isomorphism
      $$\Q \simeq \Get(k)_\Q \xrightarrow{g_*} \Get(BG)_\Q.$$
      By functoriality, it follows that $f_* : \Get(BG)_\Q \to \Get(k)_\Q$ is homotopic to a scalar multiple of $g^*$.
      Since the étale localization map $\G(BG)_\Q \to \Get(BG)_\Q$ commutes with $g^*$ by definition, we see that the square commutes if and only if $f_*$ and $g^*$ are also homotopic at the level of $\G(-)_\Q$ (up to multiplication by a scalar).
      But under the identifications of $\G(BG)_\Q$ with the K-theory spectrum of $G$-equivariant $k$-vector spaces and $\G(k)_\Q$ with the K-theory spectrum of $k$-vector spaces, the map $g^*$ sends the class of a $G$-representation $V$ to the underlying vector space $V$ while $f_*$ sends it to the class of the invariant subspace $V^G$.
      No scalar multiplication identifies these (e.g. take $V$ to be a nontrivial representation of dimension one so that $V^G=0$).
    \end{warn}

\section{Virtual fundamental classes}
\label{sec:fund}

  \subsection{Fundamental classes in G-theory}

    Let $\sX$ be a regular (nonsingular) Artin stack.
    K-theoretic Poincaré duality (\thmref{thm:Poincare}) can be reformulated as the assertion that there is a canonical class $[\sX]_{\G} \in \G(\sX)$ such that cap product gives an isomorphism
    \[ [\sX]_{\G} \cap (-) : \KB(\sX) \to \G(\sX). \]
    Of course, $[\sX]_{\G}$ is just the class of the structure sheaf $[\sO_\sX]$.

    This construction can be extended to (derived) stacks that are possibly \emph{singular} but still quasi-smooth.
    First note that by \lemref{lem:qsm fTa} we have:

    \begin{cor}\label{cor:qsm bounded}
      Let $\sX \to \sY$ be a quasi-smooth morphism of \dA stacks.
      If $\sY$ has bounded structure sheaf, then so does $\sX$.
    \end{cor}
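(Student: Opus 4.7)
The plan is to unwind the definitions and reduce to a direct application of \lemref{lem:qsm fTa} together with the fact, recorded in the earlier proposition on finite Tor-amplitude morphisms, that $\L f^*$ sends $\Dqc(\sY)_{\le 0}$ into $\Dqc(\sX)_{\le m}$ whenever $f$ is of Tor-amplitude $\le m$.

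First, since $f : \sX \to \sY$ is quasi-smooth, \lemref{lem:qsm fTa} gives an integer $m \ge 0$ such that $f$ is of Tor-amplitude $\le m$. Next, the assumption that $\sY$ has bounded structure sheaf means that $\sO_\sY \in \Dqc(\sY)_{\le N}$ for some $N$, i.e., $\sO_\sY[-N] \in \Dqc(\sY)_{\le 0}$. Applying the proposition to this coconnective complex, we conclude that
\[
  \L f^*(\sO_\sY[-N]) \;\in\; \Dqc(\sX)_{\le m}.
\]

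Now $\L f^*$ is exact and sends the structure sheaf to the structure sheaf, so $\L f^*(\sO_\sY[-N]) \simeq \sO_\sX[-N]$. Unshifting, this says $\sO_\sX \in \Dqc(\sX)_{\le m+N}$, that is, $\pi_i(\sO_\sX) = 0$ for all $i > m+N$. Hence $\sX$ has bounded structure sheaf, as required.

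There is no real obstacle: the only thing to be careful about is the bookkeeping of the shift relating $N$-coconnectivity to $0$-coconnectivity, and the implicit use of the fact that $\L f^*$ preserves the structure sheaf (which is part of the statement that $\L f^*$ is symmetric monoidal).
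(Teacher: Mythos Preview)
Your proof is correct and is exactly the argument the paper has in mind: the paper simply states the corollary as an immediate consequence of \lemref{lem:qsm fTa} without spelling out the details, and what you have written is precisely the unwinding via the earlier proposition that $\L f^*$ sends $\Dqc(\sY)_{\le 0}$ to $\Dqc(\sX)_{\le m}$ for $f$ of Tor-amplitude $\le m$.
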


    \begin{constr}[G-theoretic fundamental class]\label{constr:G fund}
      Let $\sX$ be a quasi-smooth \dA stack over a regular noetherian Artin stack $\sS$.
      Then $\sX$ is noetherian and its structure sheaf $\sO_\sX$ is coherent by \corref{cor:qsm bounded} and \remref{rem:Perf in Coh}.
      In particular, there is a \emph{G-theoretic fundamental class}
      \begin{equation*}
        [\sX]_{\G} := [\sO_\sX] \in \G(\sX).
      \end{equation*}
    \end{constr}

    \begin{rem}\label{rem:piospq}
      Under the canonical isomorphism $\G(\sX) \simeq \G(\sX_\cl)$ (\corref{cor:G derived invariance}), the fundamental class $[\sX]_{\G}$ corresponds uniquely to a class
      \begin{equation*}
        [\sX]^\vir_{\G} \in \G(\sX_\cl),
      \end{equation*}
      which we call the G-theoretic \emph{virtual} fundamental class.
      Explicitly, it can be described as the alternating sum
      \[ [\sX]^\vir_{\G} = \sum_{i\le 0} (-1)^i [\H^i(\sO_\sX)], \]
      which is finite by assumption, where $\H^i(\sO_\sX) = \pi_{-i}(\sO_\sX)$ are viewed as coherent sheaves on $\sX_\cl$ via the equivalence $\Coh(\sX_\cl) \simeq \Dcoh(\sX)^\heartsuit$.
    \end{rem}

    This virtual G-theory class agrees with the \emph{virtual structure sheaf} studied in detail by Y.-P. Lee \cite{Lee}.
    \constrref{constr:G fund} goes back to \cite[1.4.2]{Kontsevich} and was written down more precisely by To\"en in \cite[\S 4.4, para.~3]{ToenOverview}.
    Note that the various properties of virtual structure sheaves \cite[Subsect.~2.4]{Lee} are completely transparent from \constrref{constr:G fund}.

    \begin{rem}
      Since the structural morphism $f : \sX \to \sS$ is quasi-smooth and hence of finite Tor-amplitude (\lemref{lem:qsm fTa}), there is an inverse image map
      \[ f^* : \G(\sS) \to \G(\sX). \]
      and the fundamental class $[\sX]_{\G}$ is of course the same as the image of the fundamental class $[\sO_\sS] \in \G(\sS)$ defined above.
    \end{rem}

    \begin{rem}
      More generally, for any quasi-smooth morphism of noetherian \dA stacks $f : \sX \to \sY$ there is again by \lemref{lem:qsm fTa} a map
      \[ f^* : \G(\sY) \to \G(\sX). \]
      which corresponds under the isomorphisms of \corref{cor:G derived invariance} to a map
      \[ \G(\sY_\cl) \to \G(\sX_\cl) \]
      which can be thought of as a ``virtual pullback'' or virtual Gysin map along the morphism of classical stacks $f_\cl : \sX_\cl \to \sY_\cl$.
      It agrees with the virtual pullbacks of \cite{Qu} (as will follow from the next subsection).
    \end{rem}

  \subsection{Fundamental classes via deformation to the normal stack}

    In this subsection we give an alternative construction of the maps $f^* : \G(\sY) \to \G(\sX)$ in the case when $f : \sX \to \sY$ is quasi-smooth.
    This description was implicitly used in the proof of the Grothendieck--Riemann--Roch formula proven in \cite{KhanVirtual} (see \ssecref{ssec:GRR} below).

    Recall that for any quasi-smooth morphism $f : \sX \to \sY$ there is a canonical ``deformation to the normal stack'' which sits in a commutative diagram
    \begin{equation}\label{eq:D}
      \begin{tikzcd}
        \sX \ar{r}\ar{d}{0}
        & \sX \times \A^1 \ar{d}
        & \sX \times \bG_m \ar{d}{f\times\id}\ar{l}
        \\
        N_{\sX/\sY} \ar{r}{\hat{i}}\ar{d}{v}
        & D_{\sX/\sY}\ar{d}{u}
        & \sY\times\bG_m \ar[swap]{l}{\hat{j}}\ar[equals]{d}
        \\
        \sY \ar{r}{i}
        & \sY\times\A^1
        & \sY\times\bG_m\ar[swap]{l}{j}
      \end{tikzcd}
    \end{equation}
    This is a family of quasi-smooth morphisms $\sX \times \A^1 \to D_{\sX/\sY}$ parametrized by the affine line whose generic fibre is the morphism $f : \sX \to \sY$ and whose special fibre is the zero section $0 : \sX \to N_{\sX/\sY}$ of the normal stack.
    The latter is by definition the vector bundle stack $\bV(\sL_{\sX/\sY}[-1])$ associated to the $(-1)$-shifted cotangent complex.
    When $f$ is a closed immersion, $N_{\sX/\sY}$ is just the normal bundle and $D_{\sX/\sY}$ is the usual deformation to the normal bundle.
    Note that the left-hand arrow $v$ is the composite of the projection $\pi : N_{\sX/\sY} \to \sX$ with $f : \sX \to \sY$.
    See \cite[Thm.~1.3]{KhanVirtual}.

    Using this deformation we can construct a specialization map in G-theory in parallel to the case of Borel--Moore homology in \cite{KhanVirtual}.
    Consider the localization triangle (\thmref{thm:G localization}) associated to the lower row of \eqref{eq:D}:
    \begin{equation*}
      \G(\sX\times\bG_m)[-1]
      \xrightarrow{\partial} \G(\sX)
      \xrightarrow{i_*} \G(\sX \times \A^1)
      \xrightarrow{j^*} \G(\sX \times \bG_m).
    \end{equation*}
    The boundary map $\partial$ admits a section
    \begin{equation*}
      \gamma_b : \G(\sX)
      \xrightarrow{q^*} \G(\sX\times\bG_m)
      \xrightarrow{\cap b} \G(\sX\times\bG_m)[-1]
    \end{equation*}
    where $q : \sX\times\bG_m\to\sX$ is the projection and $b \in \K(\sX \times \bG_m)[-1]$ is the Bott class (inverse image of $b \in \K(\bZ[T^{\pm 1}])[-1]$).

    Consider also the localization triangle associated to the middle row of \eqref{eq:D}.
    These fit into a commutative diagram
    \begin{equation*}
      \begin{tikzcd}
        \G(\sY\times\bG_m)[-1] \ar{r}{\partial}\ar[equals]{d}
          & \G(\sY) \ar{r}{i_*}\ar{d}{v^*}
          & \G(\sY\times\A^1) \ar{r}{j^*}\ar{d}{u^*}
          & \G(\sY\times\bG_m) \ar[equals]{d}
        \\
        \G(\sY\times\bG_m)[-1] \ar{r}{\hat{\partial}}
          & \G(N_{\sX/\sY}) \ar{r}{\hat{i}_*}
          & \G(D_{\sX/\sY}) \ar{r}{\hat{j}^*}
          & \G(\sY\times\bG_m)
      \end{tikzcd}
    \end{equation*}
    where $\partial$ and $\hat{\partial}$ are the respective boundary maps.
    The right-hand square commutes by functoriality of inverse image, the middle square commutes by the base change formula, and the left-hand square commutes as a consequence.

    \begin{defn}
      Let $f : \sX \to \sY$ be a quasi-smooth morphism of noetherian \dA stacks.
      The \emph{specialization map} is the composite
        \begin{equation*}
          \sp_{\sX/\sY} : \G(\sY)
            \xrightarrow{\gamma_b} \G(\sY\times\bG_m)[-1]
            \xrightarrow{\hat{\partial}} \G(N_{\sX/\sY}),
        \end{equation*}
      where the notation is as above.
    \end{defn}

    Since $\gamma_b$ is a section of $\partial$, commutativity of the diagram above immediately yields a canonical identification
    \[ \sp_{\sX/\sY} = v^* = \pi^* \circ f^*, \]
    where $\pi : N_{\sX/\sY} \to \sX$ is the projection.
    As long as we assume $\sX$ has affine stabilizers (e.g. is a derived Deligne--Mumford stack), we know that the map $\pi^*$ is invertible by homotopy invariance (\corref{cor:G-theory/htp Get}).
    Thus we find that the inverse image map $f^*$ can be described in terms of the specialization map:

    \begin{prop}\label{prop:f^* via sp in Get}
      Let $f : \sX \to \sY$ be a quasi-smooth morphism of noetherian \dA stacks with affine stabilizers.
      Then $f^* : \G(\sY) \to \G(\sX)$ is canonically homotopic to the composite
      \begin{equation*}
        \G(\sY)
          \xrightarrow{\sp_{\sX/\sY}} \G(N_{\sX/\sY})
          \xrightarrow{(\pi^*)^{-1}} \G(\sX).
      \end{equation*}
    \end{prop}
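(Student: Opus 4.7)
The plan is essentially to unwind the definitions and run the computation already hinted at in the diagram preceding the statement. The key identity to establish is
\[
\sp_{\sX/\sY} \;=\; \pi^* \circ f^*
\]
as maps $\G(\sY) \to \G(T_{\sX/\sY}[1])$, and then to invert $\pi^*$ using homotopy invariance.

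First I would recall the commutative diagram of localization triangles associated to the middle and lower rows of \eqref{eq:D}. Commutativity of its three squares follows from functoriality of $(-)^*$ (right square), the base change formula \propref{prop:G base change} applied to the cartesian square with vertical arrows $f \times \id$ and $u$ (middle square, noting the base change is legitimate since $v$ and $u$ are of finite Tor-amplitude by \lemref{lem:qsm fTa}), and an easy diagram chase for the left square involving the boundary maps. In particular one obtains the identity $v^* \circ \partial = \hat{\partial}$ on $\G(\sY\times\bG_m)[-1]$.

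Next I would invoke the Bass fundamental sequence (\thmref{thm:Bass}), which tells us that cupping with the Bott class $b \in \K(\sY \times \bG_m)[-1]$ and pulling back along the projection $q$ produces a canonical $\K_*(\sY)$-linear splitting $\gamma_b$ of $\partial$. Combining this with the previous step gives
\[
\sp_{\sX/\sY} \;=\; \hat{\partial} \circ \gamma_b \;=\; v^* \circ \partial \circ \gamma_b \;=\; v^*.
\]
Since $v = f \circ \pi$ by construction of the deformation space (the left-hand arrow in \eqref{eq:D} factors through the zero section of $T_{\sX/\sY}[1]$ over $\sX$), functoriality of inverse image yields $v^* = \pi^* \circ f^*$, which is the required identity.

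Finally, $T_{\sX/\sY}[1] = \bV_\sX(\sL_{\sX/\sY}[-1])$ is the vector bundle stack associated to a perfect complex of Tor-amplitude $\le 0$, since $\sL_{\sX/\sY}$ has Tor-amplitude $\le 1$ by the quasi-smoothness hypothesis. Because $\sX$ has affine stabilizers, \corref{cor:G htp vb stack} applies and $\pi^* : \G(\sX) \to \G(T_{\sX/\sY}[1])$ is invertible, allowing us to rewrite the identity as $f^* = (\pi^*)^{-1} \circ \sp_{\sX/\sY}$. The main subtlety, and the step I would check most carefully, is the commutativity of the middle square of the localization diagram: strictly speaking this uses base change along $j$ and $\hat j$ together with the fact that $j$ is an open immersion (so $j^*$ is exact and $j_*$ is fully faithful), and one must verify compatibility of the connecting maps, but this is a standard triangulated-category argument once base change is in place.
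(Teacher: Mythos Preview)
Your argument is correct and follows exactly the approach the paper takes in the paragraphs immediately preceding the proposition: establish $\sp_{\sX/\sY} = \hat\partial \circ \gamma_b = v^* \circ \partial \circ \gamma_b = v^* = \pi^* \circ f^*$ from commutativity of the left square and the fact that $\gamma_b$ splits $\partial$, then invert $\pi^*$ via \corref{cor:G htp vb stack}. One small slip: the middle square commutes by base change applied to the cartesian square with vertical arrows $v$ and $u$ (and horizontal arrows $i$, $\hat i$), not ``$f\times\id$ and $u$''.
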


    \begin{rem}
      Throughout the above discussion we can replace G-theory by rational étale G-theory.
      In that case we do not need to assume affineness of stabilizers, since homotopy invariance for vector bundle stacks always holds in $\Get$ (\corref{cor:G-theory/htp Get}).
    \end{rem}

  \subsection{Grothendieck--Riemann--Roch formulas}
  \label{ssec:GRR}

    Let $k$ be a field and $\sX$ a \dA stack of finite type over $k$.
    Via stable motivic homotopy theory (see \cite{Riou}, \cite{KhanVirtual}) one can construct canonical isomorphisms
    \[ \tau_\sX : \Get(\sX)_\Q \to \bigoplus_{n\in\Z} \RGamma^{\mrm{BM}}(\sX, \bQ(n))[-2n], \]
    where on the right-hand side are spectra whose homotopy groups are the motivic Borel--Moore homology groups.
    Combining with the étale localization map $\G(\sX)_\Q \to \Get(\sX)_\Q$, we get a map
    \[ \tau_\sX : \G(\sX)_\Q \to \bigoplus_{n\in\Z} \RGamma^{\mrm{BM}}(\sX, \bQ(n))[-2n] \]
    which on $\pi_0$ gives a map
    \[ \tau_\sX : \G_0(\sX)_\Q \to \H^{\mrm{BM}}_{2n}(\sX, \Q(n)) \simeq \on{A}_*(\sX_\cl)_\Q \]
    where, if $\sX$ has affine stabilizers, the target is now identified with the Chow groups of the classical truncation $\sX_\cl$.
    This is a derived and stacky extension of the Baum--Fulton--MacPherson transformation \cite{BaumFultonMacPherson} and we have the following analogue of the main result of \emph{op. cit}.

    \begin{thm}\label{thm:GRR f_*}\leavevmode
      \begin{thmlist}
        \item
        Let $f : \sX \to \sY$ be a proper morphism between \dA stacks over $k$.
        Then there is a commutative square
        \[ \begin{tikzcd}
          \Get_0(\sX)_\Q\ar{d}{\tau_\sX}\ar{r}{f_*}
          & \Get_0(\sY)_\Q \ar{d}{\tau_\sY}
          \\
          \H^{\mrm{BM}}_{2n}(\sX, \Q(n))\ar{r}{f_*}
          & \H^{\mrm{BM}}_{2n}(\sY, \Q(n)).
        \end{tikzcd} \]

        \item\label{item:GRR f_* rep}
        Let $f : \sX \to \sY$ be a proper \emph{representable} morphism between \dA stacks over $k$.
        Then there is a commutative square
        \[ \begin{tikzcd}
          \G_0(\sX)_\Q\ar{d}{\tau_\sX}\ar{r}{f_*}
          & \G_0(\sX)_\Q\ar{d}{\tau_\sY}
          \\
          \H^{\mrm{BM}}_{2n}(\sX, \Q(n))\ar{r}{f_*}
          & \H^{\mrm{BM}}_{2n}(\sY, \Q(n)).
        \end{tikzcd} \]
      \end{thmlist}
    \end{thm}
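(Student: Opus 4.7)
The plan is to prove part (i) first, directly from the motivic six-functor formalism, and then deduce part (ii) by combining it with the fact that the étale localization map commutes with representable proper pushforward.

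For part (i), I would recall that the isomorphism $\tau_\sX$ is constructed in \cite{KhanVirtual} as follows. Rational étale G-theory $\Get(-)_\Q$ is identified with the motivic Borel--Moore homology theory with coefficients in the rational motivic K-theory spectrum $\mrm{KGL}_\Q$, and Riou's splitting $\mrm{KGL}_\Q \simeq \bigoplus_{n\in\Z} H\Q(n)[2n]$ into rational motivic Eilenberg--MacLane spectra produces the direct sum decomposition on the right-hand side. The key point is that both sides of $\tau_\sX$ are Borel--Moore homology theories in the sense of the six-functor formalism of \cite{KhanVirtual}, equipped in particular with proper pushforward operations, and $\tau$ arises from a morphism (in fact an isomorphism) of motivic ring spectra $\mrm{KGL}_\Q \simeq \bigoplus_n H\Q(n)[2n]$. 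Any morphism of coefficient spectra induces a natural transformation of the associated Borel--Moore theories that automatically commutes with proper pushforward, since proper pushforward is a structural operation of the formalism and acts functorially on modules. Passing to $\pi_0$ and restricting to the weight-$n$ summand yields the first square.

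For part (ii), I would use that when $f$ is proper \emph{representable}, the étale localization map $\G(-)_\Q \to \Get(-)_\Q$ commutes with $f_*$. This is built into the construction of the representable direct image on $\Get$ in \ssecref{ssec:G f_*} via étale descent from the direct image on G-theory of algebraic spaces (contrast with \warnref{warn:G f_*}, where the obstruction appears only for non-representable $f$). Since by construction $\tau_\sX$ on $\G_0(\sX)_\Q$ factors as $\G_0(\sX)_\Q \to \Get_0(\sX)_\Q \xrightarrow{\tau} \H^{\mrm{BM}}_{2n}(\sX,\Q(n))$, stacking this compatibility square on top of the square from part~\itemref{item:GRR f_* rep} and invoking part~(i) gives the commutativity.

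The main obstacle is really all concentrated in part~(i), and is not so much in the proof of the commutative square itself as in verifying that $\tau$ upgrades from a family of pointwise isomorphisms to a morphism of Borel--Moore theories within the motivic six-functor formalism on \da stacks. This requires the full apparatus developed in \cite{KhanVirtual}, including the proper base change and proper pushforward for motivic Borel--Moore homology of derived stacks. Once that formalism is available, both the first square and (by way of the étale localization map) the second become formal consequences.
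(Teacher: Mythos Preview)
Your proposal is correct and follows essentially the same approach as the paper: for part~(i) you identify $\Get(-)_\Q$ with $\RGamma^{\mrm{BM}}(-,\mrm{KGL}_\Q)$ and use that $\tau$ is induced by a map of coefficient spectra so as to be automatically compatible with proper pushforward in the six-functor formalism of \cite{KhanVirtual}; for part~(ii) you stack this on the square expressing compatibility of the \'etale localization map with representable proper pushforward (from \ssecref{ssec:G f_*}). One small slip: near the end you refer to ``the square from part~\itemref{item:GRR f_* rep}'' when you mean the square from part~(i).
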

    \begin{proof}
       For the first claim, one may identify $\Get_0(\sX)$ with $\RGamma^{\mrm{BM}}(\sX, \mrm{KGL})$ in a way that is natural with respect to proper direct images (see \cite[Ex.~2.13]{KhanVirtual}).
       Then the claim follows immediately from the construction of proper direct images in Borel--Moore homology theories given in \cite{KhanVirtual}.
       The second claim follows by composing with the first with the square
       \[ \begin{tikzcd}
          \G_0(\sX)_\Q\ar{d}{\tau_\sX}\ar{r}{f_*}
          & \G_0(\sY)_\Q\ar{d}{\tau_\sY}
          \\
          \Get_0(\sX)_\Q \ar{r}{f_*}
          & \Get_0(\sY)_\Q
        \end{tikzcd} \]
       which commutes by construction when $f$ is representable (see \ssecref{ssec:G f_*}).
    \end{proof}

    \begin{rem}
      The representability hypothesis in claim~\itemref{item:GRR f_* rep} is necessary, see \warnref{warn:G f_*}.
      However, following \cite{ToenGRR}, it is possible to prove a variant where the lower horizontal map is replaced by the direct image of the induced morphism of inertia stacks.
      See \cite{KhanPortaRaviYu}.
    \end{rem}

    In \cite{KhanVirtual} a fundamental class $[\sX]$ is constructed in motivic Borel--Moore homology, when $\sX$ is quasi-smooth, and the following comparison with the G-theoretic fundamental class $[\sX]_{\G} = [\sO_\sX] \in \G(\sX)$ (conjectured in \cite[Question~4.7]{ToenOverview}) is proven.

    \begin{thm}\label{thm:GRR 1}
      For every quasi-smooth \dA stack $\sX$ over $k$, there is an equality
      \[ [\sX] = \Td_{\sX}^{-1} \cap\,\tau_\sX[\sX]_{\G}, \]
      where $\Td_{\sX}$ is the Todd class of the cotangent complex $\sL_\sX$.
    \end{thm}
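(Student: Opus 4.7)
My approach is to deduce the theorem from a relative Grothendieck--Riemann--Roch identity for Gysin maps along quasi-smooth morphisms, specialized to the structural morphism $f : \sX \to \Spec(k)$. The plan is first to establish that for every quasi-smooth morphism $g : \sX' \to \sY'$ of \da stacks of finite type over $k$ and every $y \in \Get(\sY')_\Q$, one has
\[
\tau_{\sX'}(g^*(y)) \;=\; \Td_{\sL_{\sX'/\sY'}} \cap g^!(\tau_{\sY'}(y))
\]
in $\bigoplus_n \RGamma^{\mrm{BM}}(\sX', \bQ(n))[-2n]$, where $g^!$ denotes the Gysin map in motivic Borel--Moore homology constructed in \cite{KhanVirtual}. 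Taking $\sY' = \Spec(k)$, $g = f$, and $y = 1$, together with the identifications $\tau_{\Spec k}(1) = [\Spec k]$, $f^!([\Spec k]) = [\sX]$, and $f^*(1) = [\sX]_{\G}$, yields $\tau_\sX[\sX]_{\G} = \Td_\sX \cap [\sX]$, from which the statement follows by capping with $\Td_\sX^{-1}$.

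The relative identity would be proven by deformation to the $1$-shifted tangent bundle. By \propref{prop:f^* via sp in Get}, the G-theoretic Gysin $g^*$ factors as a specialization $\sp_{\sX'/\sY'}$ to $T_{\sX'/\sY'}[1]$ followed by the inverse of the pullback $\pi^*$ along $\pi : T_{\sX'/\sY'}[1] \to \sX'$; the motivic Gysin $g^!$ admits an entirely parallel description (this is how $[\sX]$ is constructed in \cite{KhanVirtual}). Since $\tau$ arises from the rational splitting $\mrm{KGL}_\Q \simeq \bigoplus_n \bQ(n)[2n]$ of motivic K-theory, it is a morphism of motivic ring spectra and hence compatible with the localization triangles of \eqref{eq:D} and with multiplication by the Bott class used to construct the specialization map. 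Consequently $\tau$ intertwines the G-theoretic and motivic specialization maps, and the problem reduces to the zero section of the vector bundle stack $\bV_{\sX'}(\sE)$ with $\sE = \sL_{\sX'/\sY'}[-1]$, and more specifically to GRR for its smooth projection $\pi$.

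The main obstacle is this last step: Riemann--Roch for the projection of a vector bundle stack, namely the identity $\tau_{\bV(\sE)} \circ \pi^* = \bigl(\pi^*\Td(\sE) \cap \pi^*(-)\bigr) \circ \tau_{\sX'}$ modulo the homotopy invariance isomorphisms on both sides. Its proof proceeds by the dévissage already used in \corref{cor:G htp vb stack}: by étale-localising we may assume $\sX'$ has the resolution property, then filter $\sE$ by shifted locally free sheaves to reduce successively to Tor-amplitudes $[0,0]$ and $[-1,-1]$, and finally to the rank-one case by the splitting principle. In the line-bundle case the identity is the classical one relating the Chern character of $\sO(D)$ to its Todd class. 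The remaining bookkeeping is to check that the Todd factor arising from $\sE = \sL_{\sX'/\sY'}[-1]$ converts, by multiplicativity on distinguished triangles and its behaviour under shifts, into the $\Td_{\sL_{\sX'/\sY'}}$ asserted in the formula; keeping the signs and shifts consistent between the two constructions of Gysin maps is the most delicate point of the argument.
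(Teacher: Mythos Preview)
The paper does not actually prove this theorem here; it is stated with a reference to \cite{KhanVirtual}, and the only hint given is the remark (just before \propref{prop:f^* via sp in Get}) that the description of quasi-smooth Gysin maps via deformation to the $1$-shifted tangent bundle ``was implicitly used in the proof of the Grothendieck--Riemann--Roch formula proven in \cite{KhanVirtual}.'' Your proposal follows precisely this route --- reduce to a relative GRR for $g^*$ versus $g^!$, factor both through specialization to $T_{\sX/\sY}[1]$ using \propref{prop:f^* via sp in Get} and its Borel--Moore analogue, and then compare the homotopy-invariance isomorphisms for $\pi : T_{\sX/\sY}[1] \to \sX$ --- so your strategy coincides with the one the paper attributes to \cite{KhanVirtual}.

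One small clarification on your ``main obstacle'': the cleanest way to phrase the last step is not as a separate Riemann--Roch for the smooth projection $\pi$, but rather as the comparison of \emph{orientations}. Under the identification $\Get(-)_\Q \simeq \RGamma^{\mrm{BM}}(-,\mrm{KGL}_\Q)$, the G-theoretic inverse $(\pi^*)^{-1}$ is the $\mrm{KGL}$-theoretic Thom isomorphism, while the motivic $(\pi^!)^{-1}$ is the $\bQ$-theoretic Thom isomorphism; the Chern character $\mrm{KGL}_\Q \simeq \bigoplus_n \bQ(n)[2n]$ carries the multiplicative Thom class to the additive one multiplied by the Todd class of the bundle. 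This is exactly your line-bundle computation after the splitting principle, but framing it this way makes the shift/sign bookkeeping for $\sE = \sL_{\sX/\sY}[-1]$ more transparent: you are comparing Thom classes of $T_{\sX/\sY}[1]$, whose Todd class is $\Td(\sL_{\sX/\sY})$ on the nose.
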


    \begin{rem}
      Via the identification mentioned above we may view the fundamental class $[\sX]$ as a class $[\sX_\cl]^{\vir} \in \on{A}_*(\sX_\cl)_\Q$.
      For $\sX$ Deligne--Mumford, this coincides with the virtual fundamental class of Behrend--Fantechi \cite{BehrendFantechi} (with respect to obstruction theory on $\sX_\cl$ induced by the cotangent complex of $\sX$) as explained in \cite[\S 3.3]{KhanVirtual}.
      We can then read \thmref{thm:GRR 1} as a comparison with the G-theoretic virtual fundamental class (\remref{rem:piospq}) via the formula
      \[ [\sX_\cl]^\vir = (\Td_{\sX_\cl}^\vir)^{-1} \cap \tau_{\sX_\cl}\left([\sX_\cl]^\vir_{\G}\right) \]
      in $\on{A}_*(\sX_\cl)_\Q$, where $\Td^\vir_{\sX_\cl}$ is the Todd class of $\sL_{\sX}|_{\sX_\cl}$.
      This recovers the virtual Grothendieck--Riemann--Roch formulas of \cite{FantechiGoettsche,CiocanFontanineKapranov,LowreySchuerg}, all proven in the case where $\sX$ is a derived scheme admitting an embedding into a smooth ambient scheme, as well as the extension to the case of quotient stacks in \cite{RaviSreedhar}.
    \end{rem}
    
  \subsection{K-theoretic fundamental classes}
  \label{ssec:K fund}

    The following construction is a K-theory analogue of the ``cohomological fundamental class'' studied in \cite[\S 3.4]{KhanVirtual}.

    \begin{constr}
      Let $\sX$ be a \dA stack.
      For every quasi-smooth proper representable morphism $f : \sZ \to \sX$, there is a canonical class
      \begin{equation*}
        [\sZ/\sX]^{\K} := f_*(1) = [f_*(\sO_\sZ)] \in \KB(\sX),
      \end{equation*}
      where $f_* : \KB(\sZ) \to \KB(\sX)$ is the direct image map (which exists by \lemref{lem:qsm fTa}) and $1 = [\sO_\sZ] \in \KB(\sZ)$ is the unit.
      If $f$ is a closed immersion, then $[\sZ/\sX]^{\K}$ is supported on $\sZ$, i.e., it lives in $\KB(\sX~\mrm{on}~\sZ)$.
    \end{constr}

    When $\sX$ is nonsingular, this class is just the direct image of the G-theoretic fundamental class of $\sZ$:

    \begin{rem}
      Let $\sX$ be a regular Artin stack.
      For any quasi-smooth proper representable morphism $f : \sZ \to \sX$, consider the commutative square
      \[ \begin{tikzcd}
        \KB(\sZ) \ar{r}{f_*}\ar{d}
        & \KB(\sX) \ar{d}
        \\
        \G(\sZ) \ar{r}{f_*}
        & \G(\sX)
      \end{tikzcd} \]
      where the vertical arrows are the Cartan maps (\thmref{thm:Poincare}).
      Since $\sX$ is regular, the right-hand vertical arrow is invertible.
      Under this identification, it follows that the K-theoretic fundamental class $[\sZ/\sX]^\K \in \KB(\sX)$ can be identified with the direct image of the G-theoretic fundamental class $[\sZ]_\G \in \G(\sZ)$ (\constrref{constr:G fund}).
    \end{rem}

    \begin{prop}\label{prop:pinpqa}
      Suppose given a commutative square of \dA stacks
      \begin{equation*}
        \begin{tikzcd}
          \sZ' \ar{r}{i'}\ar{d}{p}
            & \sX' \ar{d}{q}
          \\
          \sZ \ar{r}[swap]{i}
            & \sX
        \end{tikzcd}
      \end{equation*}
      where $i$ and $i'$ are quasi-smooth closed immersions.
      If the square is homotopy cartesian, then we have a canonical identification
      \[ q^*[\sZ/\sX] \simeq [\sZ'/\sX'] \]
      in $\KB(\sX')$.
    \end{prop}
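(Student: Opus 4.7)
The plan is to observe that this is essentially a direct application of the base change formula for K-theory (\propref{prop:K base change}) together with the fact that the inverse image map $p^* : \KB(\sZ) \to \KB(\sZ')$ is a homomorphism of $\Einfty$-ring spectra.

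First I would verify that the direct image $i_* : \KB(\sZ) \to \KB(\sX)$ appearing in the definition of $[\sZ/\sX]$ satisfies the hypotheses of \propref{prop:K base change}. A closed immersion is proper and representable, hence in particular of finite cohomological dimension and automatically satisfies the base-change property. Since $i$ is quasi-smooth it is of finite presentation (in particular almost of finite presentation) and, by \lemref{lem:qsm fTa}, of finite Tor-amplitude; the same applies to $i'$. So both Gysin maps $i_*$ and $i'_*$ are defined and \propref{prop:K base change} applies to the given homotopy cartesian square, yielding a canonical homotopy $q^* i_* \simeq i'_* p^*$ of maps $\KB(\sZ) \to \KB(\sX')$.

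Next I would evaluate both sides on the unit class $1_\sZ = [\sO_\sZ] \in \KB(\sZ)$. By definition $[\sZ/\sX] = i_*(1_\sZ)$, so $q^*[\sZ/\sX] \simeq i'_* p^*(1_\sZ)$. Since $p^*$ is symmetric monoidal (it is the inverse image functor on perfect complexes, which is symmetric monoidal and hence induces an $\Einfty$-ring map on K-theory spectra), it preserves the unit: $p^*(1_\sZ) \simeq 1_{\sZ'} = [\sO_{\sZ'}]$. Therefore $q^*[\sZ/\sX] \simeq i'_*(1_{\sZ'}) = [\sZ'/\sX']$.

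There is really no obstacle here; the only thing to check carefully is that the hypotheses of the base change formula are met, which comes down to the stability properties recorded in \lemref{lem:qsm fTa} and the fact that closed immersions are representable and proper. The statement is essentially a formal consequence of the functoriality of the construction $\sZ \mapsto [\sZ/\sX]$ together with base change.
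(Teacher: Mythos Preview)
Your proof is correct and matches the paper's own argument exactly: the paper's proof is the one-liner ``Evaluate the base change formula $q^*i_* \simeq i'_*p^*$ (\propref{prop:K base change}) on the unit $1 = [\sO_\sZ] \in \KB(\sZ)$.'' You have simply spelled out the verification of the hypotheses of \propref{prop:K base change} and the fact that $p^*$ preserves the unit, which the paper leaves implicit.
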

    \begin{proof}
      Evaluate the base change formula $q^*i_* \simeq i'_*p^*$ (\propref{prop:K base change}) on the unit $1 = [\sO_\sZ] \in \KB(\sZ)$.
    \end{proof}

    \begin{prop}[Non-transverse Bézout formula]\label{prop:Bezout}
      Let $\sX$ be a \dA stack.
      Let $f : \sY \to \sX$ and $g : \sZ \to \sX$ be quasi-smooth proper representable morphisms.
      Then we have
      \begin{equation*}
        [\sY/\sX]^{\K} \cup [\sZ/\sX]^{\K} \simeq [\sY \fibprodR_\sX \sZ/\sX]^{\K}
      \end{equation*}
      in $\KB(\sX)$.
    \end{prop}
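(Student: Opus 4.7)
The plan is to deduce the formula directly from the base change and projection formulas, together with the definition of the K-theoretic fundamental class. Form the homotopy cartesian square
\[
\begin{tikzcd}
\sW \ar{r}{g'}\ar{d}{f'} & \sY \ar{d}{f} \\
\sZ \ar{r}{g} & \sX
\end{tikzcd}
\]
where $\sW := \sY \fibprodR_\sX \sZ$. The first step is to verify that $f$, $g$, $f'$, $g'$, and the composite $f \circ g' = g \circ f'$ all satisfy the hypotheses needed to apply Propositions~\ref{prop:K proj} and~\ref{prop:K base change} (and to define the relevant fundamental classes). Concretely: properness, representability and almost finite presentation are preserved under both base change and composition; finite Tor-amplitude holds for each quasi-smooth morphism by \lemref{lem:qsm fTa}, and this is stable under base change and composition as well; and finite cohomological dimension is automatic for representable morphisms. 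Thus in particular the composite $\sW \to \sX$ is quasi-smooth proper representable, so $[\sW/\sX]^{\K}$ is well-defined.

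With these hypotheses in hand, the second step is a short computation. Applying \propref{prop:K proj} to $g$ gives
\[
[\sY/\sX]^{\K} \cup [\sZ/\sX]^{\K}
= f_*(1) \cup g_*(1)
\simeq g_*\bigl(g^* f_*(1) \cup 1\bigr)
= g_*\bigl(g^* f_*(1)\bigr).
\]
Applying base change (\propref{prop:K base change}) to the cartesian square above yields the identification $g^* f_* \simeq f'_* g'^*$, and since $g'^*$ is symmetric monoidal it sends the unit $1 \in \KB(\sY)$ to the unit $1 \in \KB(\sW)$. Therefore
\[
g_*\bigl(g^* f_*(1)\bigr) \simeq g_*\bigl(f'_*(1)\bigr) = (g \circ f')_*(1) = [\sW/\sX]^{\K},
\]
completing the proof.

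I expect the main subtlety — and really the only non-formal input — to be the verification that each of the morphisms in sight meets the hypotheses of the projection and base change formulas, especially for the derived fibre product, which is where representability and quasi-smoothness must be used to deduce finite Tor-amplitude and almost of finite presentation. Once that bookkeeping is in place, the argument is a two-line manipulation of the unit class using projection and base change, exactly in parallel with \propref{prop:pinpqa}.
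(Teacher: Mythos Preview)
Your proof is correct and follows essentially the same approach as the paper: a one-line manipulation using the projection formula and base change, with your version applying projection to $g$ and base change to $f$ while the paper swaps the roles (which is immaterial by commutativity of the cup product). Your added verification of hypotheses is sound and slightly more explicit than the paper's.
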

    \begin{proof}
      By the projection and base change formulas (Propositions~\ref{prop:K proj} and \ref{prop:K base change}), we have
      \begin{equation*}
        f_*(1) \cup g_*(1)
        \simeq f_*f^*g_*(1)
        \simeq h_*(1)
      \end{equation*}
      where $h : \sY\fibprodR_\sX\sZ \to \sX$ is the projection.
    \end{proof}

    To deal with non-proper intersections there is an \emph{excess intersection formula} which expresses the failure of the base change formula (\propref{prop:pinpqa}) in terms of the K-theoretic Euler class of the excess bundle, see \cite{ThomasonBlow} and \cite{KhanExcess}.
    For example:

    \begin{prop}[Self-intersection formula]
      Let $i : \sZ \to \sX$ be a quasi-smooth closed immersion.
      Then there is a canonical identification
      \[ i^*i_*(-) \simeq e(\sN_{\sZ/\sX}) \cup (-) \]
      of maps $\KB(\sZ) \to \KB(\sZ)$.
      In particular,
      \[ i^*[\sZ/\sX] \simeq e(\sN_{\sZ/\sX}) \]
      in $\KB(\sZ)$ and \[ [\sZ/\sX] \cup [\sZ/\sX] \simeq [\sZ \fibprodR_\sX \sZ/\sX] \simeq i_*(e(\sN_{\sZ/\sX})) \]
      in $\KB(\sX)$ (or $\KB(\sX~\mrm{on}~\sZ)$).
    \end{prop}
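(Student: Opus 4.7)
The plan is first to note that the last two identities follow formally from the operator formula. Applying the latter to $x = 1 \in \KB(\sZ)$ immediately yields $i^* [\sZ/\sX] \simeq e(\sN_{\sZ/\sX})$. Post-composing with $i_*$ and invoking the projection formula (\propref{prop:K proj}) then gives
\[
  [\sZ/\sX] \cup [\sZ/\sX]
  \simeq i_*(1) \cup i_*(1)
  \simeq i_*(i^* i_*(1))
  \simeq i_*(e(\sN_{\sZ/\sX})),
\]
and the middle identification $[\sZ \fibprodR_\sX \sZ/\sX] \simeq [\sZ/\sX] \cup [\sZ/\sX]$ is the non-transverse B\'ezout formula (\propref{prop:Bezout}) with $\sY = \sZ$ and $f = g = i$.

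For the operator formula itself, I first apply base change (\propref{prop:K base change}) to the homotopy cartesian self-intersection square
\[
  \begin{tikzcd}
    \sZ \fibprodR_\sX \sZ \ar{r}{p_1}\ar[swap]{d}{p_2}
      & \sZ \ar{d}{i} \\
    \sZ \ar{r}{i}
      & \sX
  \end{tikzcd}
\]
to get $i^* i_*(x) \simeq p_{2,*} p_1^*(x)$ for every $x \in \KB(\sZ)$. The key geometric input is that, because $i$ is a quasi-smooth closed immersion, the Koszul computation $\sO_\sZ \otimesL_{\sO_\sX} \sO_\sZ \simeq \Sym_{\sO_\sZ}(\sN_{\sZ/\sX}^\vee[1])$ canonically identifies the derived self-intersection with the vector bundle stack $\pi : \bV_\sZ(\sN_{\sZ/\sX}[-1]) \to \sZ$, in such a way that both projections $p_1, p_2$ correspond to the bundle projection $\pi$ up to the $\bZ/2$-symmetry swapping the two tensor factors. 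This symmetry acts trivially on $\KB$ (it is fibrewise multiplication by $-1$ on a shifted vector bundle, which is killed by homotopy invariance and the zero section), so that $p_1^* \simeq p_2^* \simeq \pi^*$ at the level of K-theory.

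Granting these identifications, the projection formula for $\pi$ yields $p_{2,*} p_1^*(x) \simeq \pi_* \pi^*(x) \simeq \pi_*(1) \cup x$, and the Koszul computation
\[
  \pi_* \sO_{\sZ \fibprodR_\sX \sZ}
  \simeq \Sym_{\sO_\sZ}(\sN_{\sZ/\sX}^\vee[1])
  \simeq \bigoplus_{k \ge 0} (\Lambda^k \sN_{\sZ/\sX}^\vee)[k]
\]
evaluates $\pi_*(1) = \sum_{k \ge 0} (-1)^k [\Lambda^k \sN_{\sZ/\sX}^\vee] = \lambda_{-1}(\sN_{\sZ/\sX}^\vee) = e(\sN_{\sZ/\sX})$ in $\KB(\sZ)$, as required.

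The main obstacle I foresee is making the derived-geometric identifications rigorous, in particular verifying that the swap symmetry of the self-intersection acts trivially on $\KB$. A clean alternative, avoiding these subtleties, is to deform to the normal bundle via the construction of equation~(5.2): one checks that the specialization map is compatible with the self-intersection operator, reducing the problem to the zero section $0 : \sZ \to N_{\sZ/\sX}$ of the (classical) normal bundle. Since $\pi \circ 0 = \id$, the projection formula yields $0_*(x) \simeq 0_*(1) \cup \pi^*(x)$, and hence $0^* 0_*(x) \simeq 0^* 0_*(1) \cup x$ with $0^* 0_*(1) = e(\sN_{\sZ/\sX})$ read off directly from the Koszul resolution $0_*(\sO_\sZ) \simeq \Lambda^\bullet \pi^* \sN_{\sZ/\sX}^\vee$.
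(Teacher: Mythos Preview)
Your derivation of the two ``In particular'' identities from the operator formula is correct and matches how the paper intends them to be read. The paper itself does not prove the operator formula directly: it presents the self-intersection formula as the special case of the excess intersection formula (applied to the square with $\sZ' = \sX' = \sZ$, $q = i$, $p = i' = \id$, excess bundle $\sN_{\sZ/\sX}$) and defers to \cite{ThomasonBlow} and \cite{KhanExcess} for the latter.

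Your first argument, however, has a genuine gap. The identification
\[
  \sZ \fibprodR_\sX \sZ \simeq \bV_\sZ(\sN_{\sZ/\sX}[-1])
\]
(equivalently, a global splitting $i^*i_*\sO_\sZ \simeq \Sym_{\sO_\sZ}(\sN_{\sZ/\sX}^\vee[1])$ as algebras over $\sZ$) is \emph{not} available in general: this is exactly the Arinkin--C\u{a}ld\u{a}raru obstruction, which vanishes only when the normal bundle extends to the first infinitesimal neighbourhood. What is always true is that $i^*i_*\sO_\sZ$ carries a canonical filtration with associated graded $\bigoplus_k (\Lambda^k\sN_{\sZ/\sX}^\vee)[k]$; this suffices to compute the class $i^*i_*(1) = e(\sN_{\sZ/\sX})$, but does not by itself give the operator identity, because you cannot globally identify both projections $p_1,p_2$ with a single bundle projection $\pi$ and then invoke the projection formula for $\pi$. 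Your handling of the swap symmetry is likewise predicated on this non-existent global trivialisation.

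Your alternative via deformation to the normal bundle is the right fix, and is in substance how \cite{KhanExcess} proves the excess intersection formula that the paper cites. Once one reduces to the zero section $0:\sZ\to N_{\sZ/\sX}$, the computation you sketch (projection formula for $\pi$, Koszul resolution of $0_*\sO_\sZ$) goes through cleanly. What is missing from your sketch is the compatibility step: one must check that $i^*i_*$ is identified, via specialization along $D_{\sZ/\sX}$, with $0^*0_*$ as operators on $\KB(\sZ)$; this uses $\A^1$-invariance on $\sZ\times\A^1$ and the localization triangle for the middle row of the deformation diagram, exactly as in \cite{KhanExcess}.
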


  \subsection{The \texorpdfstring{$\gamma$}{gamma}-filtration and Chow cohomology of singular schemes}
    
    This subsection is a digression that attempts to justify my interest in these ``K-theoretic fundamental classes'' (\ssecref{ssec:K fund}), which do not seem to have received much attention in the literature so far.

    \subsubsection{The $\gamma$-filtration and Chow cohomology of nonsingular schemes}

      Let $X$ be a scheme of finite type over a field $k$ and consider the group $\on{Z}_*(X)$ of algebraic cycles on $X$.
      Recall that there is a map
      \[ \on{Z}_*(X) \to \G_0(X), \quad [Z] \mapsto [\sO_Z]. \]
      This map does not respect rational equivalence but does so if we pass from $\G_0(X)$ to the graded pieces of the coniveau or $\gamma$-filtration (both agree with rational coefficients); there are canonical surjections
      \[ \CH_i(X) \to \Gr^{d-i}(\G_0(X)), \quad [Z] \mapsto [\sO_Z] \]
      where $\CH_*(X)$ denotes the quotient of $\on{Z}_*(X)$ modulo rational equivalence.

      When $X$ is nonsingular, the Grothendieck--Riemann--Roch theorem implies that the kernel of this map is torsion.
      Moreover the $\gamma$-filtration is compatible with the cup product on $\K_0(X)$ and on the ``Poincaré dual'' theories there is an isomorphism of graded rings
      \begin{equation}\label{eq:oausb}
        \CH^*(X)_\Q \to \Gr^*(\K_0(X))_\Q.
      \end{equation}
      Here $\CH^*(X) = \CH_{d-*}(X)$ by definition and $\K_0(X) \simeq \G_0(X)$ by \thmref{thm:Poincare}.
      See \cite[Exp.~0, Appendix and Exp.~14, \S 4]{SGA6} and \cite[Ex.~15.3.6]{Fulton}.
    
    \subsubsection{Chow cohomology of singular schemes}

      For singular $X$ the right-hand side of \eqref{eq:oausb} still makes sense but it is less clear how to define ``Chow cohomology'' rings $\CH^*(X)$ such that \eqref{eq:oausb} holds.%
      \footnote{%
        Fulton's operational Chow groups \cite[Chap.~17]{Fulton} are commonly used as a substitute for a ``genuine'' Chow cohomology theory, but they satisfy neither this property nor many others (for example they do not even map to singular cohomology, see \cite{TotaroChow}).
        Voevodsky's motivic cohomology groups \cite{VoevodskyField} are almost what one is looking for except that rationally they compare to the \emph{homotopy invariant} version of K-theory.
        On singular schemes, motivic cohomology can be computed (as a presheaf of complexes) by the following procedure: restrict to smooth $k$-schemes, take the left Kan extension to $k$-schemes of finite type, and finally take the $\A^1$-homotopy localization of the result (see \cite[Prop.~5]{KhanCdh}).
        Stopping before the last step gives a non-homotopy invariant version of motivic cohomology.
        This is a variant of an old construction of Fulton from \cite[\S 3.1]{FultonSingular}.
        A similar procedure works for homotopy invariant algebraic K-theory (see e.g. \cite[Props.~5.2.2 and 5.3.7]{CisinskiKhan}) and in that case stopping before the last step does recover algebraic K-theory (see \cite[\S 3.2]{FultonSingular}, \cite[App.~A]{EHKSY3}).
        The derived Chow ring constructed below should hopefully compute these non-homotopy invariant motivic cohomology groups for nice enough schemes.
      }
      There is a reasonable definition of top-degree Chow cohomology $\CH^d(X)$, where $X$ is a singular $d$-dimensional variety, due to Levine and Weibel, see e.g. \cite{LevineWeibel,Srinivas,GuptaKrishna}.
      Here I want to briefly explain how the theory of K-theoretic fundamental classes suggests a construction of a ``derived'' Chow cohomology theory.

      Let $\mrm{Z}_\der^n(X)$ denote the free abelian group on quasi-smooth projective morphisms $f : Z \to X$ of relative virtual dimension $-n$.
      Denote the class of $f : Z \to X$ by $[Z/X] \in \mrm{Z}_\der^n(X)$.
      There is an obvious product
      \[ [Z / X] \cup [Z' / X] = [Z \fibprodR_X Z' / X], \]
      as well as a map
      \[ \on{Z}_{\der}^*(X) \to \K_0(X), \quad [Z/X] \mapsto [Z/X]^{\K} \]
      where $[Z/X]^K = [f_*(\sO_Z)]$ is the K-theoretic fundamental class (\ssecref{ssec:K fund}).
      There should be some natural quotient $\on{Z}_{\der}^*(X) \twoheadrightarrow \CH_{\der}^*(X)$ and an isomorphism
      \begin{equation}\label{eq:0agu0p71}
        \CH_{\der}^*(X)_\Q \to \Gr^* (\K_0(X))_\Q.
      \end{equation}

      The following is a sort of sanity check for this hypothesis.

      \begin{thm}
        For every qcqs algebraic space $X$ and every $[Z/X] \in \on{Z}_{\der}^n(X)$, we have
        \[ [Z/X]^{\K} \in \Gr^n (\K_0(X))_\Q. \]
        More generally, the map $f_* : \K_0(Z)\to \K_0(X)$ sends $\on{Fil}^k \K_0(Z)_\Q$ to $\on{Fil}^{k+n} \K_0(X)_\Q$ for every $k$.
      \end{thm}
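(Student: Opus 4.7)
The plan is to reduce to two building blocks via a projective factorization and then invoke a Riemann--Roch-type filtration shift. Since the first claim follows from the second by taking $k = 0$ and $x = 1 \in \K_0(Z)_\Q$, I focus on the filtration shift. First I would factor $f$ projectively as a closed immersion $i : Z \to \bP^N_X$ followed by the projection $p : \bP^N_X \to X$, for some $N \ge 0$. From the cofibre sequence of cotangent complexes and smoothness of $p$, quasi-smoothness of $f$ forces $i$ to be quasi-smooth of virtual codimension $c := N + n$. Since $f_* = p_* \circ i_*$, it suffices to show separately that $p_*$ sends $\on{Fil}^m \K_0(\bP^N_X)_\Q$ into $\on{Fil}^{m-N} \K_0(X)_\Q$, and that $i_*$ sends $\on{Fil}^k \K_0(Z)_\Q$ into $\on{Fil}^{k+c} \K_0(\bP^N_X)_\Q$.

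For the smooth projective bundle projection, set $\xi := 1 - [\sO(-1)] \in \on{Fil}^1$. By the projective bundle formula (\thmref{thm:K projbun}) and the compatibility of the $\gamma$-filtration with projective bundles (standard for rationalized $\lambda$-rings), one has $\on{Fil}^m \K_0(\bP^N_X)_\Q = \sum_{i+j \ge m} p^*(\on{Fil}^i \K_0(X)_\Q) \cdot \xi^j$. A direct cohomological computation with $\sO(-k)$ on projective space gives $p_*(\xi^j) = [\sO_X]$ for $0 \le j \le N$ and $\xi^{N+1} = 0$. By the projection formula, $p_*$ of a typical element $\sum_j p^*(y_j)\xi^j$ (with $y_j \in \on{Fil}^{m-j}$) equals $\sum_j y_j$, and each summand lies in $\on{Fil}^{m-j} \sub \on{Fil}^{m-N}$, as required.

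For the quasi-smooth closed immersion $i : Z \hookrightarrow Y := \bP^N_X$---the heart of the argument---quasi-smoothness of $i$ locally presents $Z$ as the derived zero locus of a section of a rank-$c$ vector bundle $\sE$ on $Y$, so the Koszul resolution yields $[i_* \sO_Z] = \lambda_{-1}([\sE^\vee]) = \prod_{j=1}^{c} (1 - [L_j^{-1}])$ by the splitting principle, a product of $c$ elements of $\on{Fil}^1$ and hence an element of $\on{Fil}^c$. To propagate this to an arbitrary $z \in \on{Fil}^k \K_0(Z)_\Q$, I would apply the Adams--Riemann--Roch identity
\[
  \psi^\ell \circ i_* \;=\; i_* \circ \bigl(\psi^\ell(-) \cdot \theta^\ell(\sN_{Z/Y})^{-1}\bigr),
\]
where $\theta^\ell$ is the Bott cannibalistic class and $\sN_{Z/Y} := \sL_{Z/Y}[-1]$ is the virtual conormal bundle. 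Since $\theta^\ell(\sN_{Z/Y})^{-1}$ has leading Adams weight $-c$, this identity combined with the Adams-eigenspace description $\on{Fil}^p \K_0(-)_\Q = \bigoplus_{j \ge p} \K_0(-)_\Q^{(j)}$ forces $i_*(\on{Fil}^k) \sub \on{Fil}^{k+c}$.

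The hard part will be establishing the Adams--Riemann--Roch identity in this generality (quasi-smooth closed immersion with possibly singular target), together with the Adams-eigenspace description of $\on{Fil}^*$ on $\K_0(-)_\Q$ for singular qcqs algebraic spaces. The former should be obtainable from the motivic Grothendieck--Riemann--Roch theorem of \cite{KhanVirtual} via the $\mrm{KGL}$-module formalism of \cite{CisinskiDegliseBook,JinG}; the latter from general facts about rationalized pre-$\lambda$-rings. A more elementary alternative---generating $\on{Fil}^k \K_0(Z)_\Q$ by products of $\on{Fil}^1$-classes, extending line bundles to $Y$ Zariski-locally, and using the projection formula---would require Zariski descent for the $\gamma$-filtration, which appears no easier to establish.
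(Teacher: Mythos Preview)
Your reduction via a projective factorization $f = p \circ i$ matches the paper exactly, and your treatment of the projective bundle projection $p$ is correct and standard. The divergence is in the quasi-smooth closed immersion step. The paper handles $i_*$ by a geometric induction: one uses the derived blow-up of \cite{KhanRydh} along $i$ together with the excess intersection formula of \cite{KhanExcess} to reduce to the case of a virtual Cartier divisor (virtual codimension one), where the Koszul argument you sketched for the unit class is global and immediate. Your route instead goes through the Adams--Riemann--Roch identity and the Adams-eigenspace description of the $\gamma$-filtration. This is a classical alternative and would work, but note that your local Koszul computation for $[i_*\sO_Z]$ does not globalize without further input, so the whole weight of the argument rests on Adams--Riemann--Roch.

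The one point I would flag as a genuine soft spot is your proposed derivation of Adams--Riemann--Roch ``from the motivic Grothendieck--Riemann--Roch theorem of \cite{KhanVirtual} via the $\mrm{KGL}$-module formalism''. The theorem in \cite{KhanVirtual} (cf.\ \thmref{thm:GRR 1} here) compares fundamental classes in $\G$-theory with those in motivic Borel--Moore homology; it does not directly give you the compatibility of $i_*$ with Adams operations on $\K_0$. A cleaner and more self-contained route would be to run the classical proof of Adams--Riemann--Roch for regular immersions through the deformation to the $1$-shifted tangent bundle described in \ssecref{ssec:K fund} (or rather its G-/K-theoretic specialization as in \propref{prop:f^* via sp in Get}): this reduces $i_*$ to the zero section of the normal bundle $\sN_{Z/Y}$, where the cannibalistic-class computation is a direct Koszul calculation. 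That is essentially what happens in the regular case in \cite[Exp.~VII]{SGA6} and Fulton--Lang, and the derived deformation makes it go through here. Either way you end up doing roughly the same amount of work as the paper's blow-up reduction, but the blow-up approach has the virtue of staying entirely within the elementary projection/base-change calculus and avoiding the $\lambda$-ring/Adams-eigenspace machinery on possibly singular $Y$.
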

      \begin{proof}
        By embedding $Z$ into a projective bundle over $X$ and using the projective bundle formula one reduces to the case of a quasi-smooth closed immersion.
        That case is proven in \cite[Thm.~2]{KhanGamma} using derived blow-ups \cite{KhanRydh} and the excess intersection formula \cite{KhanExcess} to further reduce to the case of virtual Cartier divisors.
      \end{proof}

      In particular, there is an induced Gysin map $f_*^\gamma : \Gr^* \K_0(Z)_\Q \to \Gr^* \K_0(X)_\Q$.
      One has moreover the following version of the Grothendieck--Riemann--Roch theorem, a direct generalization of the original formulation in \cite[Exp.~VIII, Thm.~3.6]{SGA6}:

      \begin{thm}
        Let $X$ be a qcqs algebraic space.
        For every quasi-smooth projective morphism $f : Z \to X$ of relative virtual dimension $n$, there is a commutative square
        \[ \begin{tikzcd}[matrix xscale=2]
          \K_0(Z) \ar{r}{f_*}\ar{d}{\ch}
          & \K_0(X) \ar{d}{\ch}
          \\
          \Gr^* \K_0(Z) _\Q \ar{r}{f_*^\gamma(- \cdot \on{Td}(\sL_{Z/X}^\vee))}
          & \Gr^* \K_0(X)_\Q
        \end{tikzcd} \]
        where $\on{Td}(\sL_{Z/X}^\vee)$ is the Todd class of the relative tangent complex.
      \end{thm}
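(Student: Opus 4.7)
The plan is to adapt the classical strategy from \cite[Exp.~VIII]{SGA6}, factoring $f$ through a projective bundle and reducing to two special cases. Since $f : Z \to X$ is projective, it factors as $Z \xrightarrow{i} \P_X(\sE) \xrightarrow{\pi} X$ for some locally free sheaf $\sE$ on $X$, say of rank $r+1$. The closed immersion $i$ is automatically quasi-smooth: smoothness of $\pi$ together with quasi-smoothness of $f = \pi \circ i$ forces $\sL_{Z/\P_X(\sE)}$ to have Tor-amplitude in $[-1,0]$ via the cotangent fibre sequence. By multiplicativity of $\ch$, functoriality of $f_*^\gamma$ under composition, and the multiplicativity
\[ \Td(\sL_{Z/X}^\vee) \simeq i^*\Td(\sL_{\P_X(\sE)/X}^\vee) \cdot \Td(\sL_{Z/\P_X(\sE)}^\vee), \]
it suffices to verify GRR separately for $\pi$ and for $i$.

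For the smooth morphism $\pi : \P_X(\sE) \to X$, \thmref{thm:K projbun} presents $\K_0(\P_X(\sE))$ as a free $\K_0(X)$-module on $\{[\sO(-k)]\}_{0 \le k \le r}$, and an analogous splitting holds for $\Gr^*\K_0(-)_\Q$. Since both sides of the GRR square are $\K_0(X)$-linear (resp.\ $\Gr^*\K_0(X)_\Q$-linear) by the projection formula, the verification reduces to evaluating on the generators $[\sO(-k)]$; this is the classical computation using the Euler sequence for $\sT_{\P_X(\sE)/X}$, identical to the one in SGA6.

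For the quasi-smooth closed immersion $i : Z \hook Y := \P_X(\sE)$ of virtual codimension $c$, I would argue by induction on $c$ using the derived blow-up $\Bl_Z Y$, along the lines of \cite[Thm.~2]{KhanGamma}. The blow-up formula (\thmref{thm:K blow}) factors the fundamental class through the composite $Z \to \P_Z(\sN_{Z/Y}) \xrightarrow{i_D} \Bl_Z Y \xrightarrow{p} Y$, where $i_D$ is a virtual Cartier divisor, and combined with the projective bundle case this reduces the problem to the virtual Cartier divisor case. For a virtual Cartier divisor, the fundamental exact triangle $\sO(-D) \to \sO_Y \to i_{D,*}\sO_D$ gives
\[ [i_{D,*}\sO_D] = 1 - [\sO(-D)] \in \K_0(Y), \]
and applying $\ch$ yields $\ch([i_{D,*}\sO_D]) = 1 - e^{-\xi}$ with $\xi = c_1(\sO(D))$; the standard identity $(1 - e^{-\xi})/\xi = \Td(\sO(\xi))^{-1}$ then reproduces the Todd correction on the right-hand side of GRR.

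The hard part will be the inductive step via the blow-up: one must verify that the direct sum decomposition of \thmref{thm:K blow} is compatible with both the $\gamma$-filtration (so that each summand carries the correct graded degree) and with $\ch$ (so the Todd correction distributes over the summands in the predicted way). This is the substance of \cite[Thm.~2]{KhanGamma}, which handles filtration-compatibility of the Gysin maps; upgrading that result to a full GRR equality requires careful tracking of the Chern character through the semi-orthogonal summands, using the excess intersection formula to identify the contribution of the excess conormal bundle along $p : \Bl_Z Y \to Y$. A secondary point is that rational coefficients are essential throughout, both to split the $\gamma$-filtration into Adams eigenspaces and to identify the $\gamma$-filtration with the coniveau filtration on the target.
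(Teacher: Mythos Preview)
Your proposal is correct and follows exactly the same route as the paper: factor through a projective bundle, cite \cite[Exp.~VIII, \S~5]{SGA6} for the smooth projection, and invoke \cite{KhanGamma} for the quasi-smooth closed immersion. The only minor point is that the paper cites \cite[Thm.~3]{KhanGamma} rather than Thm.~2 for the closed immersion case---Thm.~3 already contains the full GRR statement (including the Chern character and Todd correction), so the ``upgrading'' you describe in your final paragraph is already done there and need not be reproved.
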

      \begin{proof}
        Again, one reduces to the case of projective bundles, see \cite[Exp.~VIII, \S 5]{SGA6}, and the case of quasi-smooth closed immersions, see \cite[Thm.~3]{KhanGamma}.
      \end{proof}

      The theory of fundamental classes in motivic cohomology \cite{KhanVirtual} produces a \emph{ring} homomorphism
      \[ \on{Z}_{\der}^*(X) \to \H^{2*}(X, \bZ_{\mrm{mot}}(*)) \]
      which should descend to $\CH_{\der}^*(X)$.
      When $X$ is smooth the resulting map
      \begin{equation}\label{eq:iboqn}
        \CH_{\der}^*(X) \to \H^{2*}(X, \bZ_{\mrm{mot}}(*)) \simeq \CH^*(X)
      \end{equation}
      should be an isomorphism.
      Note that if the base field has resolution of singularities then every cycle in $\CH^n(X)$ can be represented by the fundamental class of a projective lci map, so there is at least some quotient of $\on{Z}_{\der}^*(X)$ for which \eqref{eq:iboqn} is an isomorphism.
      Whenever this is true, it means the intersection product on $\CH^*(X)$ can be represented geometrically by derived fibred products.

      Admittedly $\on{A}^*_{\mrm{der}}$ looks more like a cobordism theory than a cycle theory.
      In fact two completely different recent approaches to algebraic cobordism both yield constructions closely resembling $\on{A}^*_{\mrm{der}}(X)$.
      In \cite{EHKSY3} the authors considered a variant where the generators are required to be \emph{finite} instead of projective.
      That theory gives a model for cobordism of \emph{smooth} schemes and unfortunately requires a Nisnevich localization procedure which renders the end result fairly intractible.
      On the other hand Annala has recently constructed good theories of algebraic cobordism and Chow cohomology of singular schemes in characteristic zero \cite{AnnalaChar0}.
      After further simplification and partial extension to general bases in \cite{AnnalaYokura,AnnalaComp}, the generators are similar to those of $\on{A}^*_{\mrm{der}}$ and, according to a private communication from Annala, there is a simple set of relations one can impose on his ``precobordism'' such that \eqref{eq:0agu0p71} holds (under very mild hypotheses on $X$).
      Comparisons with the Levine--Weibel Chow groups \cite{LevineWeibel} have yet to be investigated.

%%%%%%%%%%%%%%%%%%%%%%%%%%%%%%%%%%%%%%%%%%%%%%%%%%%%%%%%%%%%%%%%%%%%%%%%%%%

%!TEX root = kstack.tex

\bibliographystyle{halphanum}

Institute of Mathematics\\
Academia Sinica\\
Taipei\\
10617 Taiwan

\end{document}